\DeclareDocumentCommand{\newmathcommand}{ m O{0} m }{%
  \ifcsname\expandafter\@gobble\string#1\space\endcsname
    \expandafter\expandafter\expandafter\let\expandafter\csname old\string#1\expandafter\endcsname\expandafter=\csname\expandafter\@gobble\string#1\space\endcsname
  \else
    \expandafter\let\csname old\string#1\endcsname=#1
  \fi
  \expandafter\newcommand\csname new\string#1\endcsname[#2]{#3}
  \DeclareRobustCommand#1{%
    \ifmmode
      \expandafter\let\expandafter\next\csname new\string#1\endcsname
    \else
      \expandafter\let\expandafter\next\csname old\string#1\endcsname
    \fi
    \next
  }%
}
\newmathcommand{\r}{\mathring}
\DeclareRobustCommand\bigop[2][1]{%
  \mathop{\vphantom{\sum}\mathpalette\bigop@{{#1}{#2}}}\slimits@
}
\newcommand{\bigop@}[2]{\bigop@@#1#2}
\newcommand{\bigop@@}[3]{%
  \vcenter{%
    \sbox\z@{$#1\sum$}%
    \hbox{\resizebox{\ifx#1\displaystyle#2\fi\dimexpr\ht\z@+\dp\z@}{!}{$\m@th#3$}}%
  }%
}
\let\originalbeef\bigstar
\renewcommand{\bigstar}{\bigop[0.8]{\originalbeef}}
\let\originalleft\left
\let\originalright\right
\renewcommand{\left}{\mathopen{}\mathclose\bgroup\originalleft}
\renewcommand{\right}{\aftergroup\egroup\originalright}
\let\originalref\ref
\let\originaleqref\eqref
\renewcommand{\ref}[1]{\autoref{#1}}
\renewcommand{\eqref}[1]{{\renewcommand{\ref}{\originalref}\originaleqref{#1}}}
\renewcommand{\phi}{\varphi}
\renewcommand{\epsilon}{\varepsilon}
\newcommand{\A}{\GenericError{\@backslashchar A error}{You wrote \@backslashchar A again}{Stop writing \@backslashchar A, silly}{Mildred should stop writing \@backslashchar A}}
\newcommand{\tensor}{\otimes}
\newcommand{\under}{\backslash}
\DeclareMathOperator{\Sub}{Sub}
\DeclareMathOperator{\Conj}{Conj}
\DeclareMathOperator{\Parts}{Parts}
\DeclareMathOperator{\Parks}{Parks}
\DeclareMathOperator{\Marks}{Marks}
\DeclareMathOperator{\Aut}{Aut}
\DeclareMathOperator{\Stab}{Stab}
\DeclareMathOperator{\Fun}{Fun}
\DeclareMathOperator{\Tr}{Tr}
\DeclareMathOperator{\Res}{Res}
\DeclareMathOperator{\Nm}{Nm}
\DeclareMathOperator{\Cyc}{Cyc}
\DeclareMathOperator{\Pow}{Pow}
\DeclareMathOperator{\Ab}{Ab}
\DeclareMathOperator{\CGrRing}{CGrRing}
\DeclareMathOperator{\Set}{Set}
\DeclareMathOperator{\Fin}{Fin}
\DeclareMathOperator{\Pops}{Pops}
\newcommand{\op}{\textrm{op}}
\newcommand{\N}{\mathbb{N}}
\newcommand{\Z}{\mathbb{Z}}
\newcommand{\Q}{\mathbb{Q}}
\newmathcommand{\P}{\mathbb{P}}
\newcommand{\wA}{\r w}
\newcommand{\divides}{\mid}
\DeclareMathOperator{\lcm}{lcm}
\newcommand{\cupdot}{\mathbin{\mathaccent\cdot\cup}}
\newcommand{\bigcupdot}{\bigop{\cupdot}}
\newcommand{\mf}[1]{(\!(#1)\!)!}
\newcommand{\num}{\sharp}
\newcommand{\dec}{\flat}
\newcommand{\ag}[1]{\left\langle#1\right\rangle}
\providecommand{\leftsquigarrow}{%
  \mathrel{\mathpalette\reflect@squig\relax}%
}
\newcommand{\reflect@squig}[2]{%
  \reflectbox{$\m@th#1\rightsquigarrow$}%
}
\theoremstyle{definition}
\numberwithin{equation}{section}
\newaliascnt{theorem}{equation}
\newtheorem{theorem}[theorem]{Theorem}
\newaliascnt{definition}{theorem}
\newtheorem{definition}[definition]{Definition}
\newaliascnt{proposition}{theorem}
\newtheorem{proposition}[proposition]{Proposition}
\newaliascnt{corollary}{theorem}
\newtheorem{corollary}[corollary]{Corollary}
\newaliascnt{lemma}{theorem}
\newaliascnt{remark}{theorem}
\newtheorem{remark}[remark]{Remark}
\def\retheorem@title{???}
\newtheoremstyle{retheorem}%
    {.5\baselineskip plus .2\baselineskip minus .2\baselineskip}
    {.5\baselineskip plus .2\baselineskip minus .2\baselineskip}
    {\normalfont}%
    {0pt}%
    {\bfseries}%
    {.}%
    {5pt plus 1pt minus 1pt}%
    {\retheorem@title}
\theoremstyle{retheorem}
\newtheorem*{@retheorem}{}
\newenvironment{retheorem}[2]{%
\def\retheorem@title{\csname#1autorefname\endcsname{} \normalfont(\ref{#2})}%
\begin{@retheorem}%
}{\end{@retheorem}\def\retheorem@title{???}}
\title{On the image of the total power operation for Burnside rings}
\address{University of Kentucky}
\date{\today}
\author{Nathan Cornelius}
\email{nathan.cornelius@uky.edu}
\author{Lewis Dominguez}
\email{Lewis.Dominguez@uky.edu}
\author{David Mehrle}
\email{davidm@uky.edu}
\author{Lakshay Modi}
\email{Lakshay.Modi@uky.edu}
\author{Millie Rose}
\email{math@milliero.se}
\author{Nathaniel Stapleton}
\email{nat.j.stapleton@gmail.com}
\begin{document}

\begin{abstract}
    We prove that the image of the total power operation for Burnside rings $A(G) \to A(G\wr\Sigma_n)$ lies inside a relatively small, combinatorial subring $\AA(G,n) \subseteq A(G \wr \Sigma_n)$. As $n$ varies, the subrings $\AA(G,n)$ assemble into a commutative graded ring $\AA(G)$ with a universal property: $\AA(G)$ carries the universal family of power operations out of $A(G)$. We construct character maps for $\AA(G,n)$ and give a formula for the character of the total power operation. Using $\AA(G)$, we extend the Frobenius--Wielandt homomorphism of \cite{DSY1992} to wreath products compatibly with the total power operation. Finally, we prove a generalization of Burnside's orbit counting lemma that describes the transfer map $A(G \wr \Sigma_n) \to A(\Sigma_n)$ on the subring $\AA(G,n)$. 
\end{abstract}

\maketitle

\vspace{-1em}
\tableofcontents

\newpage

\section{Introduction}

This paper explores consequences of the isomorphism of $G \wr \Sigma_n$-sets
\[
(G/H)^{\times n} \cong (G \wr \Sigma_n)/(H \wr \Sigma_n),
\]
which gives great control over the total power operation for the Burnside ring of $G$
\[
\P_n \colon A(G) \to A(G \wr \Sigma_n).
\]
The total power operation sends the isomorphism class of a finite $G$-set $X$ to the isomorphism class of the $G \wr \Sigma_n$-set $X^{\times n}$. The smallest additive summand of $A(G \wr \Sigma_n)$ generated by elements of the canonical basis and containing the image of the total power operation is a relatively small, combinatorial subring $\AA(G,n) \subseteq A(G \wr \Sigma_n)$.
With an understanding of this combinatorial structure, we prove that $\AA(G) = \bigoplus_{n \geq 0} \AA(G,n)$ inherits a commutative graded ring structure from the transfer product on $\bigoplus_{n} A(G \wr \Sigma_n)$.
Additionally, we show that the construction $\AA(G)$ is functorial in the underlying abelian group of $A(G)$.
With this in hand, we 
\begin{enumerate}[A.)]
    \item Show that $\AA(G)$ carries the universal family of power operations out of $A(G)$.
    \item Describe a character map for $\AA(G,n)$.
    \item Give a formula for the character of $\P_n$. 
    \item Explicitly describe how any map of abelian groups $A(H) \to A(G)$ induces a map $\AA(H) \to \AA(G)$ and give a formula for the character of this map.
\end{enumerate}
This last item gives us the ability to extend the Frobenius--Wielandt map of \cite[Theorem 1]{DSY1992} to the Burnside ring of the wreath product and also to give an elegant formula, in the spirit of Burnside's orbit counting lemma, for the transfer map $A(G \wr \Sigma_n) \to A(\Sigma_n)$ along the quotient $G \wr \Sigma_n \to \Sigma_n$ after restriction to $\AA(G,n) \subseteq A(G \wr \Sigma_n)$. 

To elaborate, let $G$ be a finite group and let $A(G)$ be the Burnside ring of $G$. 
This is a free $\Z$-module of rank equal to the number of isomorphism classes of transitive $G$-sets.
The Burnside ring admits a ``character map'' landing in $\Marks(G) = \Fun(\Conj(G), \Z)$, where $\Conj(G)$ denotes the set of conjugacy classes of subgroups of $G$.
The character map is an isomorphism after rationalization.
The total power operation is a multiplicative map
\[
\P_n \colon A(G) \to A(G \wr \Sigma_n)
\]
induced by taking a $G$-set $X$ to the $n$-fold Cartesian product $X^{\times n}$ equipped with the usual $G \wr \Sigma_n$-action.

Let $\AA(G,n) \subseteq A(G \wr \Sigma_n)$ be as defined above. It turns out that this subset of $A(G \wr \Sigma_n)$ can be described in many more ways.
We provide three further descriptions here.

A finite $G \wr \Sigma_n$-set $Y$ is called submissive if $Y$ admits an equivariant embedding into $X^{\times n}$ for some $G$-set $X$.
The coproduct and product of submissive sets are submissive. In particular, we prove the following description of $\AA(G,n)$:

\begin{retheorem}{proposition}{prop:AAisKsub}
    The ring $\AA(G,n)$ is the Grothendieck ring of isomorphism classes of submissive $G \wr \Sigma_n$-sets.
\end{retheorem}

The ring $\AA(G,n)$ can also be described using the integer partitions of $n$ decorated by the set $\Conj(G)$.
Such a partition of $n$ is a function
\[
\lambda \colon \Conj(G) \times \N_{> 0} \to \N
\]
such that $\sum_{[H],m} \lambda_{[H],m} \cdot m = n$.
The quantity $\lambda_{[H],m}$ counts the number of occurrences in the partition $\lambda$ of the natural number $m$ decorated by the conjugacy class $[H] \in \Conj(G)$.
Let $\Parts(G,n)$ be the set of integer partitions of $n$ decorated by $\Conj(G)$ and $\Parks(G,n)$ be the ring of integer valued functions on $\Parts(G,n)$.

There is a ring map $\beta^* \colon \Parks(G,n) \to \Marks(G \wr \Sigma_n)$ induced by a canonical map $\beta \colon \Conj(G \wr \Sigma_n) \to \Parts(G,n)$. 
\begin{retheorem}{proposition}{cor:levelpullback}
    There is a pullback square of commutative rings
\[\begin{tikzcd}
    \AA(G,n) \ar[r] \ar[d] & A(G \wr \Sigma_n) \ar[d, "\chi"] \\
    \Parks(G,n) \ar[r, "\beta^*"] & \Marks(G \wr \Sigma_n),
\end{tikzcd}\]
where $\chi$ is the character map.    
\end{retheorem}
This provides $\AA(G,n)$ with a character map $\AA(G,n) \to \Parks(G,n)$ that is an isomorphism after rationalization.

For the third description, let $A$ be an abelian group and let $R = \bigoplus_{n \geq 0} R_n$ be a commutative $\N$-graded ring. 
An $R$-valued family of power operations on $A$ is a family of functions $(P_n \colon A \to R_n)_{n \geq 0}$ with the property that $P_0$ is the constant function $1$, $P_n(0)=0$ for $n>0$, and 
\[ \displaystyle P_n(x+y) = \sum_{i+j=n} P_i(x)P_j(y). \]
Let $\Pops(A,R)$ be the set of $R$-valued families of power operations on $A$. 
This assignment is functorial in both $A$ and $R$. 
As a functor of $R$, it is corepresentable by a commutative graded ring $\Pow(A)$. 
Using these functors, we produce the following isomorphism:
\begin{retheorem}{proposition}{cor:powang}
    There is a canonical isomorphism $\Pow(A(G)) \cong \bigoplus_n \AA(G,n)$ of commutative graded rings.     
\end{retheorem}
To explain this isomorphism further, recall that the direct sum of abelian group $A(G \wr \Sigma) = \bigoplus_n A(G \wr \Sigma_n)$ acquires a commutative graded ring structure induced by the ``transfer multiplication''
\[
X \star Y = (G \wr \Sigma_n) \times_{G \wr \Sigma_i \times G \wr \Sigma_j} (X \times Y)
\]
for $i+j = n$, $X$ a $G \wr \Sigma_i$-set, and $Y$ a $G \wr \Sigma_j$-set. 
The family of power operations $(\P_n \colon A(G) \to A(G \wr \Sigma_n))_{n \geq 0}$ gives rise to a map of commutative graded rings $\Pow(A(G)) \to A(G \wr \Sigma)$. 
This map is injective with image $\AA(G) = \bigoplus_n \AA(G,n)$.

Similarly, one can consider $RU(G \wr \Sigma) = \bigoplus_n RU(G \wr \Sigma_n)$. In this case, work of Zelevensky \cite{Zelevinsky} can be used to show that the family of power operations $(\P_n \colon RU(G) \to RU(G \wr \Sigma_n))$ gives rise to an isomorphism of commutative graded rings $\Pow(RU(G)) \xrightarrow{\cong} RU(G \wr \Sigma)$.

A map of abelian groups $F \colon A(H) \to A(G)$ induces a map 
\[ \r F \colon \AA(H,n) \cong \Pow(A(H))_n \to \Pow(A(G))_n \cong \AA(G,n). \]
Rationalization induces a linear map $\Q \otimes F \colon \Marks(H,\Q) \to \Marks(G,\Q)$ using the character isomorphism $\Q \otimes A(G) \cong \Marks(G,\Q)$, where $\Marks(G,\Q)$ denotes $\Q \tensor \Marks(G)$.
Similarly, rationalization induces a linear map $\Q \otimes \r F \colon \Parks(H,n,\Q) \to \Parks(G,n,\Q)$, where $\Parks(G,n,\Q)$ denotes $\Q \tensor \Parks(G,n)$.
\begin{retheorem}{theorem}{thm:parksform}
    Let $F \colon A(H) \to A(G)$ be a map of abelian groups, let $M$ be the matrix representing $\Q \otimes F\colon \Marks(H,\Q) \to \Marks(G,\Q)$ in the bases consisting of conjugacy classes of subgroups, and let $\r M$ be the matrix representing $\Q \otimes \r F \colon \Parks(H,n,\Q) \to \Parks(G,n,\Q)$ in the bases consisting of decorated partitions of $n$.
    We give an explicit formula for the coefficients of $\r M$ in terms of the coefficients of $M$.    
\end{retheorem}
Note that this theorem applies to any additive map -- not just to restriction and transfer maps.
Particularly interesting examples to consider are the transfer map along $G \to e$ and the Frobenius--Wielandt map.

Let $m = |G|$. The Frobenius--Wielandt map is a ring homomorphism $A(C_{m}) \to A(G)$, where $C_{m}$ is the cyclic group of order $m$. It is induced by the map $\Marks(C_{m}) \to \Marks(G)$ coming from the function $\Conj(G) \to \Conj(C_m)$ sending a conjugacy class of subgroups $[H]$ to the unique subgroup of $C_m$ of order $|H|$. 
Notably, the Frobenius--Wielandt map is not, in general, induced by a group homomorphism.

\ref{thm:parksform} together with the map $\beta$ mentioned above, allows us to extend the Frobenius--Wielandt map to wreath products:

\begin{retheorem}{proposition}{cor:FWandpower}
Let $m = |G|$ and $n>0$. We construct a Frobenius--Wielandt map
\[
w_n \colon A(C_{m} \wr \Sigma_n) \to A(G \wr \Sigma_n)
\]
compatible with the usual Frobenius--Wielandt map $A(C_m) \to A(G)$ through the total power operation.
\end{retheorem}

With this in hand, we explore the relationship between power operations and work of Romero \cite{Romero2020} that gives a group-theoretic condition under which the Frobenius--Wielandt map commutes with norms.

\subsection*{Acknowledgements} 
It is a pleasure to thank William Balderrama, Tobias Barthel, Ben Braun, Sune Precht Reeh, and Tomer Schlank for helpful discussions related to this work. We also appreciate Shahzad Kalloo's enthusiasm, ideas, and virtual participation in the Kentucky Bourbon Seminar. Other (occasional) attendees of the seminar included Evan Franchere, Prerna Dhankhar, Usman Hafeez, Joshua Peterson, Jesse Keyes.

Mehrle and Rose were supported by the eCHT under NSF Grant DMS-2135884. Stapleton was supported by a Sloan research fellowship, NSF Grant DMS-2304781, and a grant from the Simons Foundation (MP-TSM-00002836, NS).

\section{Background and notation} \label{sec:background}

Most of the material in this background section can be found in \cite{Rymer, tomdieck, DSY1992, Bouc2010}. See \cite{BMMS} for more on power operations.

\subsection{Burnside rings} \label{subsec:burnside rings}
Let $G$ be a finite group. 
Let $\Fin_G$ denote the category of finite $G$-sets and let $A^+(G) = \pi_0 {\Fin_G}^{\cong}$ denote the set of isomorphism classes of finite $G$-sets.
We denote the isomorphism class of a finite $G$-set $X$ by $[X] \in A^+(G)$.
The coproduct (disjoint union) and product of finite $G$-sets make $A^+(G)$ into a commutative semiring.

The Burnside ring $A(G)$ is the Grothendieck ring of $A^+(G)$. 
As a $\Z$-module, $A(G)$ has a canonical basis consisting of isomorphism classes of transitive finite  $G$-sets.
In this basis, the multiplication is determined by the double coset formula
\[ 
    G/H \times G/K \cong \coprod_{HgK \in H\backslash G \slash K} G/(H^g \cap K),
\]
where $H^g$ denotes the conjugation $g^{-1}Hg$.

\subsection{Restriction and transfer} \label{subsec:res and tr}
Given a group homomorphism $\phi \colon H \to G$, there are functors 
\[ \Res_\phi \colon \Fin_G \to \Fin_H \text{ and } \Tr_\phi \colon \Fin_H \to \Fin_G \]
respectively called restriction and transfer. Restriction is induced by restricting the $G$-action on a $G$-set to $H$ along $\phi$, and transfer is induced by sending an $H$-set $X$ to the $G$-set $G \times_H X$. In the literature, restriction is often called inflation when $\phi$ is surjective and transfer is often called induction when $\phi$ is injective and deflation when $\phi$ is surjective.

These functors induce maps
\[
\Res_\phi \colon A(G) \to A(H) \text{ and } \Tr_\phi \colon A(H) \to A(G).
\]
Restriction is a ring map and transfer is an $A(G)$-module map for the action of $A(G)$ on $A(H)$ through the restriction map.

\subsection{External products} \label{subsec:ext prod}
Given a finite $G$-set $X$ and a finite $H$-set $Y$, the Cartesian product $X \times Y$ is a finite $G \times H$-set.
To avoid ambiguity, we denote this operation by $X \boxtimes Y$, which we call the external product.
The external product induces a functor
\[ \boxtimes \colon \Fin_G \times \Fin_H \to \Fin_{G\times H} \]
and a homomorphism 
\[ \boxtimes \colon A(G) \tensor A(H) \to A(G \times H). \]

\subsection{Wreath products} \label{subsec:wr prod}
We write $G \wr \Sigma_n$ for the wreath product, which is the semidirect product $G^{\times n} \rtimes \Sigma_n$, where $\Sigma_n$ acts on $G^{\times n}$ on the left via permutations. 
Explicitly, for $\bar g = (\bar g_1,\ldots, \bar g_n) \in G^{\times n}$ and $\tau \in \Sigma_n$, we write $\tau\bar g$ for the element in $G^{\times n}$ with $(\tau\bar g)_i = \bar g_{\tau^{-1}(i)}$. 
Then the multiplication in $G \wr \Sigma_n$ is given by $(\bar{g},\sigma)(\bar{h},\tau) = (\bar{g}\, \sigma \bar{h}, \sigma \tau)$. 
If $G$ acts on the left on a space $X$, then $G \wr \Sigma_n$ acts on $X^{\times n}$ on the left. Explicitly, we have
\[
((\bar{g},\sigma)x)_i = \bar g_{i}x_{\sigma^{-1}(i)}.
\]

\subsection{Finite sets and orders} \label{subsec:fin sets}

For $n \in \N$, let $[n]$ denote a set of size $n$.
We identify the symmetric group $\Sigma_n$ with the group $\Sigma_{[n]} = \Aut([n])$ of bijections of the set $[n]$.
When $i+j=n$, any identification $[i] \amalg [j] \cong [n]$ induces an embedding $\Sigma_i \times \Sigma_j \hookrightarrow \Sigma_n$.
Each of these embeddings is conjugate, and we will only use these maps to induce operations on Burnside rings or similar conjugacy invariant objects, so the choice of embedding is immaterial.
More generally, for $X$ some set and functions $m,f \colon X \to \N$ where $f$ has finite support, we will consider embeddings of the form
\[ \prod_{x \in X} \Sigma_{m(x)}^{\times f(x)} \hookrightarrow \Sigma_n \text{ where } n = \sum_{x \in X} f(x)m(x). \]
Such embeddings arise from choosing any identification $\coprod_{x \in X} [f(x)] \times [m(x)] \cong [n]$.
Again, all such embeddings are conjugate, so the particular choice is immaterial.
These identifications also induce embeddings of the form
\[ G \wr \Sigma_i \times G \wr \Sigma_j \hookrightarrow G \wr \Sigma_n \text{ and } \prod_{x \in X} (G \wr \Sigma_{m(x)})^{\times f(x)} \hookrightarrow G \wr \Sigma_n, \]
each of which is conjugate to any other.

Similarly, for a set $X$, we will identify $X^{\times n}$ with $X^{[n]} = \prod_{[n]}X$.
Any identification $[i] \amalg [j] \cong [n]$ induces a bijection $X^{\times i} \times X^{\times j} \cong X^{\times n}$.
Unfortunately, the distinction between these bijections often matters:
sometimes it is necessary that the $X^{\times i}$ factor appear `first.'
Accordingly, we implicitly choose total orders on the sets $[n]$, and we put the lexicographic ordering on products and disjoint unions.
In this situation, there is exactly one order preserving isomorphism $[i] \amalg [j] \cong [n]$, and we use this one to induce `the' bijection $X^{\times i} \times X^{\times j} \cong X^{\times n}$.
Similarly, when an arbitrary total order is selected on $X$, there is exactly one order preserving isomorphism $\coprod_{x \in X} [f(x)] \times [m(x)] \cong [n]$.

We use these choices to induce `the' embeddings
\[ \Sigma_i \times \Sigma_j \subseteq \Sigma_n \text{, } \prod_{x \in X} \Sigma_{m(x)}^{\times f(x)} \subseteq \Sigma_n \text{, } G \wr \Sigma_i \times G \wr \Sigma_j \subseteq \Sigma_n \text{, and } \prod_{x \in X} (G \wr \Sigma_{m(x)})^{\times f(x)} \subseteq G \wr \Sigma_n, \]
when such a particular choice is desired.

\subsection{Transfer products} \label{subsec:tr prod}
Let $n=i+j$.
Given a finite $G \wr \Sigma_i$-set $X$ and a finite $G \wr \Sigma_j$-set $Y$, we will be interested in the induced $G \wr \Sigma_n$-set
\[ X \star Y = \Tr_\phi(X \boxtimes Y), \]
where $\phi$ is the embedding $G \wr \Sigma_i \times G \wr \Sigma_j \subseteq G \wr \Sigma_n$.
To avoid clutter, we suppress the groups from the notation and write $\Tr_{i,j}^n$ for the transfer along $\phi$.
This determines a functor
\[ \star \colon \Fin_{G \wr \Sigma_i} \times \Fin_{G \wr \Sigma_j} \to \Fin_{G \wr \Sigma_n} \]
and a homomorphism 
\[\star \colon A(G \wr \Sigma_i) \tensor A(G \wr \Sigma_j) \to A(G \wr \Sigma_n), \]
which we call the transfer product.

\subsection{The marks homomorphism} \label{subsec:marks}
Let $\Conj(G)$ denote the set of conjugacy classes of subgroups of $G$. 
The conjugacy class of a subgroup $H \subseteq G$ is denoted by $[H]$. There is a canonical bijection between the set $\Conj(G)$ and the set of isomorphism classes of transitive $G$-sets sending the conjugacy class $[H]$ to the isomorphism class $[G/H]$.

For a commutative ring $R$, let $\Marks(G,R)$ be the ring of $R$-valued functions on the set $\Conj(G)$. 
If $R$ is omitted, it is assumed to be $\Z$. This is often referred to as the ``ghost ring'' in the literature.
The ring $\Marks(G,R)$ has an $R$-linear basis given by the indicator functions 
\[ \1_{[H]} \colon [K] \mapsto \begin{cases} 1, & [H]=[K] \\ 0, & \text{otherwise} \end{cases} \]
for $[H] \in \Conj(G)$.

The marks homomorphism, which we also call the character map, is the ring homomorphism
\[
\chi = \chi_G \colon A(G) \to \Marks(G)
\]
that sends a $G$-set $X$ to the function on $\Conj(G)$ given by cardinalities of fixed point sets
\[
\chi([X]) \colon [H] \mapsto |X^H|.
\]
The marks homomorphism is injective and an isomorphism after rationalization. In other words,
\[ \Q \tensor \chi \colon \Q \tensor A(G) \to \Q \tensor \Marks(G) \cong \Marks(G,\Q) \]
is an isomorphism of commutative rings.

\subsection{Induced operations on marks} \label{subsec:marks ops}

Since $\Res_\phi$ and $\Tr_\phi$ are additive, rationalization induces maps
\[
\Res_\phi \colon \Marks(G,\Q) \to \Marks(H,\Q) \text{ and } \Tr_\phi \colon \Marks(H,\Q) \to \Marks(G,\Q)
\]
that fit into commutative squares
\[
\begin{tikzcd}
    A(G) \ar[r, "\Res_\phi"] \ar[d, "\chi_G"'] & A(H) \ar[d, "\chi_H"] \\ \Marks(G,\Q) \ar[r, "\Res_\phi"] & \Marks(H,\Q)
\end{tikzcd}
\text{ and }
\begin{tikzcd}
    A(H) \ar[r, "\Tr_\phi"] \ar[d, "\chi_H"'] & A(G) \ar[d, "\chi_G"] \\ \Marks(G,\Q) \ar[r, "\Tr_\phi"] & \Marks(H,\Q)
\end{tikzcd}
\]
witnessing compatibility with the marks homomorphism.

Similarly, since $\boxtimes$ and $\star$ are bilinear, rationalization induces maps
\[ \boxtimes \colon \Marks(G,\Q) \tensor \Marks(H,\Q) \to \Marks(G \times H,\Q) \]
and
\[ \star \colon \Marks(G \wr \Sigma_i,\Q) \tensor \Marks(G \wr \Sigma_j,\Q) \to \Marks(G \wr \Sigma_n,\Q) \]
fitting into commutative squares
\[
\begin{tikzcd}
    A(G) \tensor A(H) \ar[r, "\boxtimes"] \ar[d, "\chi_G \tensor \chi_H"'] & A(G \times H) \ar[d, "\chi_{G \times H}"] \\
    \Marks(G,\Q) \tensor \Marks(H,\Q) \ar[r, "\boxtimes"] & \Marks(G \times H,\Q)
\end{tikzcd}
\]
and
\[
\begin{tikzcd}
    A(G \wr \Sigma_i) \tensor A(G \wr \Sigma_j) \ar[r, "\star"] \ar[d, "\chi_{G\wr\Sigma_i} \tensor \chi_{G\wr\Sigma_j}"'] & A(G \wr \Sigma_n) \ar[d, "\chi_{G \wr \Sigma_n}"] \\
    \Marks(G\wr\Sigma_i,\Q) \tensor \Marks(G\wr\Sigma_j,\Q) \ar[r, "\star"] & \Marks(G\wr\Sigma_n,\Q)
\end{tikzcd}
\]
witnessing compatibility with the marks homomorphism.

\subsection{Formula for restriction on marks} \label{subsec:marks res}
To give a formula for the restriction along $\phi \colon H \to G$ on marks, consider the induced function
\[ \phi \colon \Conj(H) \to \Conj(G) \colon [K] \mapsto [\phi(K)]. \]
Precomposition with this function gives the restriction:
\[ \Res_\phi(f)([K]) = f(\phi[K]) = f([\phi(K)]) \]
for $f \in \Marks(G,\Q)$ and $[K] \in \Conj(H)$. 
In fact, this formula defines restrictions
\[ \Res_\phi \colon \Marks(G,R) \to \Marks(H,R) \]
for any coefficient ring $R$.

\subsection{Formula for transfer on marks} \label{subsec:marks tr}
The formula for the transfer map on marks is more complicated.
Along a subgroup inclusion $H \subseteq G$, the formula is given by
\[ \Tr_{H \subseteq G}(f)([K]) = \sum_{\substack{gH \in G/H \\ K^g \subseteq H}} f([K^g]) \]
for $f \in \Marks(H)$ and $[K] \in \Conj(G)$. 
This defines transfers
\[ \Tr_{H \subseteq G} \colon \Marks(H,R) \to \Marks(G,R) \]
for any coefficient ring $R$.
At the other extreme, when $\phi \colon G \twoheadrightarrow e$ is a surjection onto the trivial group, Burnside's orbit counting lemma gives the beautiful formula
\[
\Tr_{G \twoheadrightarrow e}(f)([e]) = \frac{1}{|G|}\sum_{g \in G} f([\ag g]),
\]
for $f \in \Marks(G,\Q)$, where $[\ag g]$ denotes the conjugacy class of the cyclic subgroup generated by $g \in G$.
Applied to $\chi([X])$ for a finite $G$-set $X$, the formula demonstrates that the number of orbits for the $G$-action on $X$ is equal to the average of the number of fixed points of $X$ by the elements of $G$.
More general transfer formulas can be found in \cite[Chapter 5]{Bouc2010}.

\subsection{Formula for external product on marks} \label{subsec:marks ext prod}
To give a formula for the external product on marks, note that for a finite $G$-set $X$, a finite $H$-set $Y$, and a subgroup $K \subseteq G \times H$,
\[ (X \times Y)^K \cong X^{\pi_1 K} \times Y^{\pi_2 K}, \]
where the $\pi_i$'s denote the projections.
From this, we conclude that 
\[ (f \boxtimes g)([K]) = f(\pi_1 [K])g(\pi_2 [K]) = f([\pi_1(K)])g([\pi_2(K)]) \]
for $f \in \Marks(G,\Q)$, $g \in \Marks(H,\Q)$, and $[K] \in \Conj(G \times H)$.
In fact, this formula defines external products
\[ \boxtimes \colon \Marks(G,R) \tensor \Marks(H,R) \to \Marks(G \times H,R) \] 
for any coefficient ring $R$.

\subsection{Formula for the transfer product on marks} \label{subsec:marks tr prod}

Let $n=i+j$.
The formula for the transfer product on marks is given by
\[ (f \star g)([H]) = \sum_{\substack{\sigma (\Sigma_i \times \Sigma_j) \in \Sigma_n / (\Sigma_i \times \Sigma_j) \\ H^\sigma \subseteq G \wr \Sigma_i \times G \wr \Sigma_j}} f(\pi_{[i]}[H^\sigma]) g(\pi_{[j]}[H^\sigma]) \]
for $f \in \Marks(G \wr \Sigma_i, \Q)$, $g \in \Marks(G \wr \Sigma_j, \Q)$, and $[H] \in \Conj(G \wr \Sigma_n)$ where $\pi_{[i]} \colon \Sigma_{[i]} \times \Sigma_{[j]} \to \Sigma_{[i]}$ denotes the projection and similarly for $\pi_{[j]}$.

In fact, since this formula is integral, it defines transfer products
\[ \star \colon \Marks(G \wr \Sigma_i,R) \tensor \Marks(G \wr \Sigma_j,R) \to \Marks(G \wr \Sigma_n,R) \]
for any coefficient ring $R$.

To see the correctness of the formula, notice that
\[
\smashoperator[r]{\sum_{\substack{\sigma (\Sigma_i \times \Sigma_j) \in \Sigma_n / (\Sigma_i \times \Sigma_j) \\ H^\sigma \subseteq G \wr \Sigma_i \times G \wr \Sigma_j}}} f(\pi_{[i]}[H^\sigma]) g(\pi_{[j]}[H^\sigma])
=
\smashoperator[r]{\sum_{\substack{(\bar g,\sigma) (G \wr \Sigma_i \times G \wr \Sigma_j) \in (G \wr \Sigma_n) / (G \wr \Sigma_i \times G \wr \Sigma_j) \\ H^{(\bar g,\sigma)} \subseteq G \wr \Sigma_i \times G \wr \Sigma_j}}} f(\pi_{[i]}[H^{(\bar g,\sigma)}]) g(\pi_{[j]}[H^{(\bar g,\sigma)}])
\]
using the bijection $\Sigma_n / (\Sigma_i \times \Sigma_j) \cong (G \wr \Sigma_n) / (G \wr \Sigma_i \times G \wr \Sigma_j)$ and that the latter expression is exactly the composition of the formulas for the external product and for $\Tr_{i,j}^n$.

\subsection{Commutative graded rings} \label{subsec:cgrings}

Define
\[ A(G \wr \Sigma) = \bigoplus_{n \geq 0} A(G \wr \Sigma_n) \text{ and } \Marks(G \wr \Sigma,R) = \bigoplus_{n \geq 0} \Marks(G \wr \Sigma_n, R). \]
When $R$ is omitted, it is assumed to be $\Z$.
The transfer product induces a commutative, associative, and unital operation on these graded abelian groups.
Thus, the transfer product gives these objects the structure of a commutative $\N$-graded ring (as opposed to a graded commutative ring, which satisfies $xy = (-1)^{|x||y|}yx$). 
The unit of $A(G \wr \Sigma)$ is $1 \in A(G \wr \Sigma_0) = A(e) \cong \Z$, and the unit of $\Marks(G \wr \Sigma,R)$ is the indicator function $1=\1_{[e]} \in \Marks(G \wr \Sigma_0,R) = \Marks(e,R) \cong R$ of $[e] \in \Conj(G \wr \Sigma_0)$.
The graded rings $A(G \wr \Sigma)$ and $\Marks(G \wr \Sigma)$ are connected in the sense that the degree 0 subrings $A(G \wr \Sigma_0)$ and $\Marks(G \wr \Sigma_0)$ are isomorphic to $\Z$.

The level-wise restriction and transfer maps along $\phi \colon H \to G$ are compatible with the transfer product and therefore induce graded ring maps\footnote{Note that this does not induce the structure of a Mackey functor on $G \mapsto A(G\wr\Sigma)$ as these restrictions and transfers do not obey the double coset formula, as is easily verifiable in degree 0.}
\[ \Res_\phi \colon A(G \wr \Sigma) \to A(H \wr \Sigma) \text{ and } \Tr_\phi \colon A(H \wr \Sigma) \to A(G \wr \Sigma). \]
There are also graded ring maps
\[ \Res_\phi \colon \Marks(G \wr \Sigma, R) \to \Marks(H \wr \Sigma, R) \text{ and } \Tr_\phi \colon \Marks(H \wr \Sigma, R) \to \Marks(G \wr \Sigma,R), \]
although the transfer is only defined when the coefficients of the linear map $\Tr_{\phi \wr \Sigma_n} \colon \Marks(H \wr \Sigma_n,\Q) \to \Marks(G \wr \Sigma_n,\Q)$ are elements of $R$.

Similarly, the level-wise marks homomorphisms induce a graded ring map
\[ \chi \colon A(G \wr \Sigma) \to \Marks(G \wr \Sigma), \]
which is an isomorphism after tensoring with $\Q$. 
In other words,
\[ \Q \tensor \chi \colon \Q \tensor A(G \wr \Sigma) \to \Q \tensor \Marks(G \wr \Sigma) \cong \Marks(G \wr \Sigma,\Q) \]
is an isomorphism.

\subsection{Graded monoid rings} \label{subsec:monoid rings}

An augmented monoid will refer to a monoid equipped with a homomorphism into $\N$ called the size or augmentation.
The augmentation of an augmented monoid $M$ is denoted by $M \to \N \colon m \mapsto \|m\|$.
Given an augmented monoid $M$, the monoid ring $\Z[M]$ is an $\N$-graded ring. 
This produces a functor from the category of augmented monoids to the category of $\N$-graded rings with grading preserving maps.

\subsection{Power operations} \label{subsec:pow ops}
For $n \geq 0$, the total power operation
\[
\P_n \colon A^+(G) \to A^+(G \wr \Sigma_n)
\]
is defined by $\P_n([X]) = [X^{\times n}]$ for $X$ a finite $G$-set.
These maps are multiplicative and satisfy an additivity relation in terms of the transfer product
\[
\P_n(X+Y) = \sum_{\substack{i+j = n \\ i, j \geq 0}} \P_i(X) \star \P_j(Y) = \sum_{\substack{i+j = n \\ i, j \geq 0}} \Tr_{i,j}^n(\P_i(X) \boxtimes \P_j(Y)) .
\]
The operation $\P_0$ is the constant function $1$, and $\P_n(0)=0$ for $n > 0$.
The total power operation extends to a map
\[
\P_n \colon A(G) \to A(G \wr \Sigma_n)
\]
by enforcing the above additivity relation.
Specifically, $\P_n(X-Y)$ may be computed inductively by using that
\[ \P_n(X) = \P_n( (X-Y) + Y ) = \P_n(X-Y) + \sum_{\substack{i+j=n \\ i \geq 0,\, j > 0}} \P_i(X-Y) \star \P_j(Y). \]
A closed form is given in \ref{cor:powofdiff}.

\subsection{The Frobenius--Wielandt map} \label{subsec:weird}
Let $G$ be a finite group, let $m = |G|$, and let $C_m$ be the cyclic group of order $m$. 
The Frobenius--Wielandt map is a strange ring map $A(C_m) \to A(G)$. 
This ring map is unusual since it is not induced by restriction along a group homomorphism.
Instead, it is induced by the map
\[
\Cyc \colon \Conj(G) \to  \Conj(C_m)
\]
sending a conjugacy class $[H]$ to the conjugacy class of the unique subgroup of $C_m$ of order $|H|$. 
This induces a ring map
\[
w = \Cyc^* \colon \Marks(C_m) \to \Marks(G).
\]
In \cite{DSY1992}, it is shown that the induced map $A(C_m) \to \Marks(G)$, given by precomposing $w$ with the marks homomorphism, factors through $A(G)$, providing the ring map $w \colon A(C_m) \to A(G)$.

\section{The summand $\AA(G,n) \subseteq A(G \wr \Sigma_n)$} \label{sec:submissives}

\begin{definition}
Let $\AA(G,n) \subseteq A(G \wr \Sigma_n)$ be the summand spanned by the smallest subset of the canonical basis whose span contains the image of the total power operation $\P_n \colon A(G) \to A(G \wr \Sigma_n)$. 
\end{definition}

In this paper, we give many equivalent descriptions of $\AA(G,n)$.
We begin with a description of $\AA(G,n)$ as a Grothendieck ring.

\begin{definition}
We say that a finite $G \wr \Sigma_n$-set $X$ is submissive if it admits a $G \wr \Sigma_n$-equivariant injection into a $G \wr \Sigma_n$-set of the form $Y^{\times n}$ for some $G$-set $Y$.
\end{definition}

The disjoint union and product of submissive $G \wr \Sigma_n$-sets are submissive:
If $X_1 \hookrightarrow {Y_1}^{\times n}$ and $X_2 \hookrightarrow {Y_2}^{\times n}$, then $X_1 \amalg X_2 \hookrightarrow (Y_1 \amalg Y_2)^{\times n}$ and $X_1 \times X_2 \hookrightarrow (Y_1 \times Y_2)^{\times n}$, and it is clear that the empty union $\emptyset$ and empty product $\ast$ are submissive $G \wr \Sigma_n$-sets.
Thus, we may consider the Grothendieck ring of (isomorphism classes of) submissive $G \wr \Sigma_n$-sets.

Any transitive $G \wr \Sigma_n$-subset of a submissive set is also submissive, so the category of submissive $G \wr \Sigma_n$-sets is generated under finite coproducts by the transitive submissive $G \wr \Sigma_n$-sets in the same way that the category of finite $G \wr \Sigma_n$-sets is generated under finite coproducts by the transitive $G \wr \Sigma_n$-sets.
Thus the Grothendieck ring of submissive $G \wr \Sigma_n$-sets is freely generated as an abelian group by the isomorphism classes of transitive submissive $G \wr \Sigma_n$-sets.
In this sense, we may consider the Grothendieck ring of submissive $G \wr \Sigma_n$-sets as a subring of $A(G \wr \Sigma_n)$.

\begin{proposition} \label{prop:AAisKsub}
The Grothendieck ring of submissive $G \wr \Sigma_n$-sets is the summand $\AA(G,n) \subseteq A(G \wr \Sigma_n)$.
\end{proposition}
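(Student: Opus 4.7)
The plan is to show that $\AA(G,n)$ and the Grothendieck ring $K$ of submissive $G \wr \Sigma_n$-sets are both equal to $\Z\langle T \rangle$ for the same subset $T$ of the canonical basis of $A(G \wr \Sigma_n)$. For $K$, that subset is the set $\mathcal{S}$ of isomorphism classes of transitive submissive $G \wr \Sigma_n$-sets, as already observed above. For $\AA(G,n)$, it is, by definition, the smallest subset $T_\AA$ of the canonical basis whose span contains the image of $\P_n$. I will prove $\mathcal{S} = T_\AA$.

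The inclusion $\mathcal{S} \subseteq T_\AA$ is immediate: given a transitive submissive set $T_0$ with equivariant embedding $T_0 \hookrightarrow X^{\times n}$, the image of $T_0$ is a single orbit of $X^{\times n} = \P_n([X])$, so $T_0$ occurs with positive coefficient in the canonical basis expansion of $\P_n([X]) \in \AA(G,n)$, placing $T_0$ in $T_\AA$.

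The reverse inclusion $T_\AA \subseteq \mathcal{S}$ will rest on the key lemma that the transfer product of two submissive sets is submissive. Given embeddings $A \hookrightarrow X^{\times i}$ and $B \hookrightarrow Y^{\times j}$, the exactness of the transfer functor $\Tr_{i,j}^n$ (being induction along a subgroup inclusion) yields an injection $A \star B \hookrightarrow X^{\times i} \star Y^{\times j}$, and the target is a summand of $(X \amalg Y)^{\times n}$ via the canonical decomposition
\[ (X \amalg Y)^{\times n} \cong \bigsqcup_{i+j=n} X^{\times i} \star Y^{\times j}, \]
which is the set-level shadow of the identity $\P_n(X+Y) = \sum_{i+j=n} \P_i(X) \star \P_j(Y)$. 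With the key lemma in hand, induction on $n$ shows that $\P_n(x) \in \Z\langle\mathcal{S}\rangle$ for every $x \in A(G)$: when $x = [X]$ is an actual $G$-set, this is just the orbit decomposition of $X^{\times n}$; in general, writing $x = X - Y$, the inductive formula
\[ \P_n(X-Y) = \P_n(X) - \sum_{\substack{i+j=n \\ j > 0}} \P_i(X-Y) \star \P_j(Y), \]
together with the inductive hypothesis on $\P_i$ for $i < n$ and the key lemma, closes the induction. This gives $\mathrm{image}(\P_n) \subseteq \Z\langle\mathcal{S}\rangle$, and the minimality of $T_\AA$ then forces $T_\AA \subseteq \mathcal{S}$.

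The main obstacle lies in establishing the key lemma; once the decomposition of $(X \amalg Y)^{\times n}$ into transfer products is isolated, the rest is careful bookkeeping with the inductive formula for $\P_n$ on virtual $G$-sets.
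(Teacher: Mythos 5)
Your proof is correct, and it follows the paper's overall framing: identify both rings with the span of the same subset of the canonical basis, with the inclusion $\mathcal{S} \subseteq T_\AA$ argued just as the paper does. The divergence is in the reverse inclusion, i.e., in how one upgrades the easy containment $\P_n(A^+(G)) \subseteq \Z\langle\mathcal{S}\rangle$ (orbit decomposition of $X^{\times n}$) to $\P_n(A(G)) \subseteq \Z\langle\mathcal{S}\rangle$. The paper outsources this step to \ref{cor:pos enough}, whose proof uses the closed formula \ref{cor:powofdiff} for $\P_n(x-y)$ and the observation that each summand $\P_i(x)\star\P_\lambda(y)$ appearing there is already a summand in the expansion of $\P_n(x + |\lambda|y)$; alternatively it invokes \ref{cor:montoab} via the strict-units characterization of power operations. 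You instead inline the closure of submissives under the transfer product (which the paper records separately as \ref{prop:AAstar} but does not invoke in this particular proof) and close the reverse inclusion by a direct induction on $n$ from the recursive formula for $\P_n(X-Y)$. Your route is more elementary and entirely self-contained here --- no partition/composition calculus, no formal completion --- at the cost of doing the combinatorial work on the spot rather than isolating a reusable statement like \ref{cor:pos enough}. Both arguments are valid.
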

\begin{proof}
First we show that the Grothendieck ring of submissive $G \wr \Sigma_n$-sets is contained in $\AA(G,n)$.
This follows from the fact that if $X \hookrightarrow Y^{\times n}$ is a submissive $G \wr \Sigma_n$-set, each of the transitive $G \wr \Sigma_n$-subsets of $X$ are also subsets of $Y^{\times n}$. 
Since $\P_n([Y]) = [Y^{\times n}]$ is in the image of the total power operation, each of these transitives is in $\AA(G,n)$, and thus $[X]$ is in $\AA(G,n)$. 

For the other inclusion, note that if $[(G \wr \Sigma_n)/K]$ appears in the basis decomposition of $\P_n([Y])$, then $(G \wr \Sigma_n)/K$ injects into $Y^{\times n}$ and so $(G \wr \Sigma_n)/K$ is a submissive $G \wr \Sigma_n$-set. 
Thus, we see that the Grothendieck ring of submissive $G \wr \Sigma_n$-sets contains the basis members required to express the image of the power operation $A^+(G) \to A(G \wr \Sigma_n)$.
Further, this is enough to express the image of the larger power operation $A(G) \to A(G \wr \Sigma_n)$ because each of the transitives appearing in the basis decomposition of $\P_n([X]-[Y])$ appear in the basis decomposition of $\P_n([X]+m[Y])$ for large enough $m$. 
Although the proof of this fact is too technical to appear here, we prove it as \ref{cor:pos enough} using the language of partitions and compositions to manipulate the relation describing power operations applied to sums.
Alternatively, one may use the graded ring structure of \ref{sec:alpha} together with \ref{cor:montoab} to complete the proof.
\end{proof}

\begin{proposition}
Let $\phi \colon H \to G$ be a group homomorphism. 
Then $\Res_{\phi \wr \Sigma_n} \colon A(G \wr \Sigma_n) \to A(H \wr\Sigma_n)$ restricts to a ring map
\[
\Res_{\phi} \colon \AA(G,n) \to \AA(H,n)
\]
and
$\Tr_{\phi \wr \Sigma_n} \colon A(H \wr \Sigma_n) \to A(G\wr \Sigma_n)$ restricts to a map of abelian groups
\[
\Tr_{\phi} \colon \AA(H, n) \to \AA(G,n).
\]
\end{proposition}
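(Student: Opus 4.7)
The plan is to leverage \ref{prop:AAisKsub}, which identifies $\AA(G,n)$ and $\AA(H,n)$ with the Grothendieck rings of submissive $G \wr \Sigma_n$- and $H \wr \Sigma_n$-sets, respectively. Since $\Res_{\phi \wr \Sigma_n}$ is already a ring map and $\Tr_{\phi \wr \Sigma_n}$ is already additive at the level of Burnside rings, it suffices to show that each operation preserves submissivity of finite equivariant sets. Once this is done, both claims descend immediately from the corresponding properties on the ambient Burnside rings.

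The restriction case is essentially formal. For any finite $G$-set $Y$, the $H \wr \Sigma_n$-set $\Res_{\phi \wr \Sigma_n}(Y^{\times n})$ agrees, as an equivariant set, with $(\Res_\phi Y)^{\times n}$: the underlying set is $Y^{\times n}$ in both cases, and the $H \wr \Sigma_n$-action is obtained by composing the $G \wr \Sigma_n$-action with $\phi \wr \Sigma_n$ either before or after forming the $n$-fold product. Hence an equivariant injection $X \hookrightarrow Y^{\times n}$ witnessing submissivity of $X$ restricts to an equivariant injection $\Res_{\phi \wr \Sigma_n}(X) \hookrightarrow (\Res_\phi Y)^{\times n}$.

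The transfer case rests on a natural isomorphism
\[ \Tr_{\phi \wr \Sigma_n}(Y^{\times n}) \cong (\Tr_\phi Y)^{\times n} \]
of $G \wr \Sigma_n$-sets, for any finite $H$-set $Y$. To establish it, decompose $Y = \bigsqcup_i H/K_i$ into orbits; the induced $H \wr \Sigma_n$-orbit decomposition of $Y^{\times n}$ has orbits of the form $(H \wr \Sigma_n)/(\prod_i K_i \wr \Sigma_{m_i})$ indexed by functions $m$ with $\sum_i m_i = n$. A parallel orbit decomposition applies to $(\Tr_\phi Y)^{\times n}$, with each $K_i$ replaced by $\phi(K_i)$. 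Applying the basic isomorphism $(G \wr \Sigma_n)/(K \wr \Sigma_n) \cong (G/K)^{\times n}$ from the introduction (to the relevant products of subgroups) identifies the two decompositions term by term, producing the required natural isomorphism. Since the induction functor $\Tr_{\phi \wr \Sigma_n} = (G \wr \Sigma_n) \times_{H \wr \Sigma_n} (-)$ preserves injections, a submissive embedding $X \hookrightarrow Y^{\times n}$ transfers to an embedding $\Tr_{\phi \wr \Sigma_n}(X) \hookrightarrow (\Tr_\phi Y)^{\times n}$, exhibiting $\Tr_{\phi \wr \Sigma_n}(X)$ as submissive.

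The main obstacle is the transfer isomorphism above, which requires a careful bookkeeping of the orbit decomposition of $Y^{\times n}$ and its interaction with the semi-direct product structure of $H \wr \Sigma_n$ and its image under $\phi \wr \Sigma_n$. The restriction half is immediate, and once both preservation statements are secured, the ring and abelian group assertions follow automatically.
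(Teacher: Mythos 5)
Your proof is correct and follows the same strategy as the paper: reduce both claims to showing that $\Res_{\phi\wr\Sigma_n}$ and $\Tr_{\phi\wr\Sigma_n}$ preserve submissivity, which rests on the isomorphisms $\Res_{\phi\wr\Sigma_n}(Y^{\times n})\cong(\Res_\phi Y)^{\times n}$ and $(G\wr\Sigma_n)\times_{H\wr\Sigma_n}Y^{\times n}\cong(G\times_H Y)^{\times n}$ together with the fact that restriction and transfer preserve equivariant injections. The paper states both key isomorphisms without proof, whereas you supply an orbit-by-orbit verification of the transfer isomorphism; a slightly cleaner route is the direct equivariant bijection $(G\times_H Y)^{\times n}\to (G\wr\Sigma_n)\times_{H\wr\Sigma_n}Y^{\times n}$, $([g_1,y_1],\dots,[g_n,y_n])\mapsto [(\bar g,e),(y_1,\dots,y_n)]$, but either argument is fine.
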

\begin{proof}
For the restriction map, it suffices to notice that the restriction of a submissive $G \wr \Sigma_n$-set along $\phi \wr \Sigma_n$ is submissive. This follows from the fact that if $Y$ is a finite $G$-set, then there is an isomorphism of $H \wr \Sigma_n$-sets $\Res_{\phi \wr \Sigma_n}(Y^{\times n}) \cong (\Res_{\phi}(Y))^{\times n}$.

For the transfer map, let $X$ be a submissive $H \wr \Sigma_n$-set and choose an injection $X \hookrightarrow Y^{\times n}$. The proposition follows from the fact that
\[
G \wr \Sigma_n \times_{H \wr \Sigma_n} X \hookrightarrow G \wr \Sigma_n \times_{H \wr \Sigma_n} Y^{\times n}
\]
is injective and there are isomorphisms of $G \wr \Sigma_n$-sets
\[
G \wr \Sigma_n \times_{H \wr \Sigma_n} Y^{\times n} \cong (G \times_H Y)^{\times n}. \qedhere
\]
\end{proof}

\begin{proposition}\label{prop:AAstar}
    The transfer product $\star \colon A(G \wr \Sigma_i) \otimes A(G \wr \Sigma_j) \to A(G \wr \Sigma_n)$ restricts to a transfer product
    \[ \star \colon \AA(G,i) \otimes \AA(G,j) \to \AA(G,n) \]
    where $n=i+j$.
\end{proposition}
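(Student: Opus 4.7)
The plan is to reduce the claim to showing that the transfer product of submissive sets is submissive, then exhibit an explicit embedding into a power of a $G$-set.

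By \ref{prop:AAisKsub}, $\AA(G,k)$ is spanned by isomorphism classes of submissive $G \wr \Sigma_k$-sets. Since $\star$ is bilinear, it suffices to show that if $X$ is a submissive $G \wr \Sigma_i$-set and $Y$ is a submissive $G \wr \Sigma_j$-set, then $X \star Y = \Tr_{i,j}^n(X \boxtimes Y)$ is a submissive $G \wr \Sigma_n$-set. So pick equivariant injections $X \hookrightarrow Z_1^{\times i}$ and $Y \hookrightarrow Z_2^{\times j}$ for some $G$-sets $Z_1, Z_2$. I will construct an equivariant injection $X \star Y \hookrightarrow (Z_1 \amalg Z_2)^{\times n}$.

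First I note that both the external product $\boxtimes$ and the transfer $\Tr_{i,j}^n$ preserve injections: the former obviously, and the latter because $G \wr \Sigma_n$ is free over $G \wr \Sigma_i \times G \wr \Sigma_j$, so inducing up preserves embeddings. Thus it is enough to produce an equivariant injection
\[ \Tr_{i,j}^n(Z_1^{\times i} \boxtimes Z_2^{\times j}) \hookrightarrow (Z_1 \amalg Z_2)^{\times n}. \]
The key step is the following identification. Let $U_i \subseteq (Z_1 \amalg Z_2)^{\times n}$ be the $G \wr \Sigma_n$-invariant subset of tuples having exactly $i$ coordinates lying in $Z_1$. The sub-$(G \wr \Sigma_i \times G \wr \Sigma_j)$-set of $U_i$ in which the first $i$ coordinates are in $Z_1$ and the last $j$ coordinates are in $Z_2$ is canonically isomorphic to $Z_1^{\times i} \boxtimes Z_2^{\times j}$, and its $\Sigma_n$-orbit covers all of $U_i$. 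Using the bijection $\Sigma_n / (\Sigma_i \times \Sigma_j) \cong (G \wr \Sigma_n) / (G \wr \Sigma_i \times G \wr \Sigma_j)$ noted in \ref{subsec:marks tr prod}, this gives a natural isomorphism of $G \wr \Sigma_n$-sets
\[ \Tr_{i,j}^n(Z_1^{\times i} \boxtimes Z_2^{\times j}) \cong U_i \subseteq (Z_1 \amalg Z_2)^{\times n}. \]
Composing with the injection $U_i \hookrightarrow (Z_1 \amalg Z_2)^{\times n}$ gives the desired embedding, so $X \star Y$ is submissive.

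The main obstacle is verifying that the canonical map $\Tr_{i,j}^n(Z_1^{\times i} \boxtimes Z_2^{\times j}) \to U_i$ is actually an isomorphism. Bijectivity amounts to checking that every tuple in $(Z_1 \amalg Z_2)^{\times n}$ with exactly $i$ entries in $Z_1$ arises from a unique coset $\sigma(G \wr \Sigma_i \times G \wr \Sigma_j)$ together with an element of $Z_1^{\times i} \times Z_2^{\times j}$; this is a direct consequence of the fact that the $\Sigma_n$-action transitively permutes the subsets of $[n]$ of cardinality $i$, and that the stabilizer of the standard choice $\{1,\ldots,i\} \subseteq [n]$ is precisely $\Sigma_i \times \Sigma_j$. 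All remaining verifications (functoriality of $\boxtimes$ and $\Tr$, preservation of injections) are routine.
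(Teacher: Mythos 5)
Your proof is correct and takes essentially the same route as the paper's: the paper also reduces to showing the transfer product of submissive sets is submissive, and chains $X_1 \star X_2 \hookrightarrow Y_1^{\times i} \star Y_2^{\times j} \hookrightarrow \coprod_{i'+j'=n} Y_1^{\times i'} \star Y_2^{\times j'} \cong (Y_1 \amalg Y_2)^{\times n}$. The identification of $Y_1^{\times i} \star Y_2^{\times j}$ with your $U_i$ (tuples in $(Y_1 \amalg Y_2)^{\times n}$ with exactly $i$ coordinates in $Y_1$) is exactly the content of the paper's last displayed isomorphism applied to the $i$-th summand, so you have simply unpacked the same argument a bit more explicitly.
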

\begin{proof}
    This follows from the fact that if $X_1 \hookrightarrow {Y_1}^{\times i}$ and $X_2 \hookrightarrow {Y_2}^{\times j}$ are submissive $G \wr \Sigma_i$ and $G \wr \Sigma_j$-sets respectively, then
    \[ X_1 \star X_2 \hookrightarrow {Y_1}^{\times i} \star {Y_2}^{\times j} \hookrightarrow \coprod_{i'+j'=n} {Y_1}^{\times i'} \star {Y_2}^{\times j'} \cong (Y_1 \amalg Y_2)^{\times n}. \qedhere \]
\end{proof}

\begin{remark}
We are certainly not the first to notice that the image of the total power operation is very small relative to the size of $A(G \wr \Sigma_n)$ and also reasonably well-behaved. For instance, this is noted in \cite[Remark 9.3]{SchwedeCourseNotes}.
\end{remark}

\section{Decorated partitions} \label{sec:decoratedpartitions}

Partitions, i.e.\ formal sums of positive integers, are important in the study of symmetric groups.
In this paper, we will make use of $X$-decorated partitions, where $X$ is a set. These are sometimes called colored partitions.
An $X$-decorated partition is a formal sum of pairs in $X \times \N_+$.
Such a pair is called a part.
A part is interpreted as its integer component, called the size of the part, `decorated' by the $X$ component.
The classical theory of integer partitions is recovered when $X=*$ is a singleton.

We write the collection of $X$-decorated partitions as $P(X) = \N\{X \times \N_+\}$.
We identify this collection with finite support functions
\[ \lambda \colon X \times \N_+ \to \N \colon x,m \mapsto \lambda_{x,m}. \]
The finite support function $\lambda$ is identified with the formal sum 
\[ \lambda = \sum_{x \in X,\, m \in \N_+} \lambda_{x,m} \cdot (x,m) \]
containing $\lambda_{x,m}$-many parts of size $m$ and decoration $x$.
When $\lambda \in P(*)$ is an integer partition, we write $\lambda_m$ instead of $\lambda_{*,m}$.
In this situation, it is important to note that $\lambda_m$ denotes the number parts of size $m$ in the partition and not the $m$th formal summand (in some chosen ordering).
Decorated partitions will be generally denoted by the lowercase Greek letters $\kappa,\lambda,\mu$ or less frequently $\gamma,\delta,\epsilon$.

We generalize the structure (and notation) of integer partitions to decorated partitions.
Formal summation gives $P(X)$ the structure of a free commutative monoid
\[ \kappa + \lambda \colon x,m \mapsto \kappa_{x,m} + \lambda_{x,m}. \]
The identity of this monoid is the empty partition 0.
The number of parts in a partition $\lambda$ is called its length and is denoted
\[ |\lambda| = \sum_{x,m} \lambda_{x,m}. \]
The sum of the sizes of the parts in a partition $\lambda$ is called its size and is denoted
\[ \|\lambda\| = \sum_{x,m} \lambda_{x,m} \cdot m. \]
A partition of size $n$ may also be called a partition of $n$. 
If $X$ is finite, there are finitely many $X$-decorated partitions of size $n$.
Length and size are additive so that $|\kappa + \lambda| = |\kappa| + |\lambda|$ and $\|\kappa + \lambda\| = \|\kappa\|+\|\lambda\|$.
The size map $P(X) \to \N$ gives $P(X)$ the structure of an augmented monoid.

We also define the factorial and multifactorial which are respectively given by
\[ \lambda! = \prod_{x,m} \lambda_{x,m}! \text{ and } \mf\lambda = \prod_{x,m} (m!)^{\lambda_{x,m}}. \]
The multifactorial is exponential so that $\mf{\kappa + \lambda} = \mf\kappa \cdot \mf\lambda$, but there is no such relationship for the factorial.
Given a function $f \colon X \to Y$, there is a pushforward map $f_* \colon P(X) \to P(Y)$ which corresponds to a fiber sum of finite support functions
\[ f_*\lambda \colon y,m \mapsto \sum_{x \in f^{-1}(y)} \lambda_{x,m}. \]
The pushforward $f_*$ preserves addition, length, size, and the multifactorial, and it preserves the factorial when $f$ is injective.

Of particular importance in this paper are partitions decorated by $\Conj(G)$. Recall that $\Conj(G)$ denotes the conjugacy classes of subgroups of $G$. 
\begin{definition}
Let $\Parts(G) = P(\Conj(G))$ denote the set of $\Conj(G)$-decorated partitions.
The set of $\Conj(G)$-decorated partitions of size $n$ is denoted by $\Parts(G,n)$.
\end{definition}
In this notation $\Parts(G) = \coprod_n \Parts(G,n)$.
We identify $\Parts(e)$ with the set of integer partitions $P(*)$.

\section{The graded rings $A(G \wr \Sigma)$ and $\AA(G)$} \label{sec:alpha}

Recall the commutative graded ring $A(G \wr \Sigma) = \bigoplus_{n \geq 0} A(G \wr \Sigma_n)$ from \ref{subsec:cgrings}.
\begin{definition}
Let $\Conj(G \wr \Sigma) = \coprod_{n \geq 0} \Conj(G \wr \Sigma_n)$ denote the collection of conjugacy classes of subgroups of $G \wr \Sigma_n$ as $n$ varies.
\end{definition}
Recall that there is a canonical bijection between the set $\Conj(G)$ and the set of isomorphism classes of transitive $G$-sets. This corresponds to the canonical additive basis for $A(G \wr \Sigma)$, where the conjugacy class $[H] \in \Conj(G \wr \Sigma_n)$ corresponds to the degree $n$ basis element $[(G \wr \Sigma_n)/H] \in A(G \wr \Sigma_n) \subseteq A(G \wr \Sigma)$.
\begin{proposition}
    The transfer product on $A(G \wr \Sigma)$ induces the structure of a commutative augmented monoid on $\Conj(G \wr \Sigma)$ via the embedding $\Conj(G \wr \Sigma) \hookrightarrow A(G \wr \Sigma)$.
    Explicitly, for $[H] \in \Conj(G \wr \Sigma_i)$ and $[K] \in \Conj(G \wr \Sigma_j)$,
    \[ [H] \star [K] = [H \times K] \in \Conj(G \wr \Sigma_n), \]
    where $n=i+j$ and $H \times K \subseteq G \wr \Sigma_i \times G \wr \Sigma_j \subseteq G \wr \Sigma_n$.
    The augmentation $\Conj(G \wr \Sigma) \to \N$ is given by $[H \subseteq G \wr \Sigma_n] \mapsto n$.
\end{proposition}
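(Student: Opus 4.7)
The plan is to show that the transfer product of two canonical basis elements of $A(G \wr \Sigma)$ is again a single canonical basis element. Once this closure property is established, commutativity, associativity, and unitality of the monoid structure on $\Conj(G \wr \Sigma)$ follow immediately from the corresponding properties on $A(G \wr \Sigma)$, and the augmentation claim is a direct calculation.

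For the computation, I would take $[H] \in \Conj(G \wr \Sigma_i)$ and $[K] \in \Conj(G \wr \Sigma_j)$ and unpack the definition of $\star$ on the corresponding basis elements:
\[
[(G \wr \Sigma_i)/H] \star [(G \wr \Sigma_j)/K] = \bigl[\Tr_{i,j}^n\bigl((G \wr \Sigma_i)/H \boxtimes (G \wr \Sigma_j)/K\bigr)\bigr].
\]
The external product of two transitive coset spaces is isomorphic to $(G \wr \Sigma_i \times G \wr \Sigma_j)/(H \times K)$ as a $G \wr \Sigma_i \times G \wr \Sigma_j$-set. Inducing this up along the fixed embedding $G \wr \Sigma_i \times G \wr \Sigma_j \subseteq G \wr \Sigma_n$ from \ref{subsec:fin sets} yields the transitive $G \wr \Sigma_n$-set $(G \wr \Sigma_n)/(H \times K)$, which corresponds to the basis element $[H \times K] \in \Conj(G \wr \Sigma_n)$.

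Since $\star$ sends pairs of basis elements to a basis element, the subset $\Conj(G \wr \Sigma) \subseteq A(G \wr \Sigma)$ inherits a commutative, associative, and unital binary operation from $A(G \wr \Sigma)$, with unit corresponding to the basis element $1 \in A(G \wr \Sigma_0)$, namely the trivial subgroup $[e] \in \Conj(G \wr \Sigma_0)$. The augmentation $[H \subseteq G \wr \Sigma_n] \mapsto n$ respects $\star$ because the computed product lies in $\Conj(G \wr \Sigma_{i+j})$ by construction, and it sends the unit to $0$. The only subtle point in the argument is keeping track of the implicit embeddings $H \times K \subseteq G \wr \Sigma_i \times G \wr \Sigma_j \subseteq G \wr \Sigma_n$ fixed in \ref{subsec:fin sets}; different choices of embedding are conjugate in $G \wr \Sigma_n$, so the resulting conjugacy class $[H \times K]$ is well-defined independent of these choices.
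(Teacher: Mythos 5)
Your proof is correct and follows essentially the same route as the paper: compute $\Tr_{i,j}^n$ of the external product of coset spaces to see that the transfer product of two basis elements is again a basis element, namely $[(G\wr\Sigma_n)/(H\times K)]$, and then inherit associativity, commutativity, and unitality from the ambient graded ring $A(G\wr\Sigma)$. The remark about conjugate embeddings giving a well-defined conjugacy class is a reasonable extra observation but is already handled by the conventions set up in \ref{subsec:fin sets}.
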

\begin{proof}
    The fact that this is associative, commutative, and unital follows from the fact that the transfer product has these properties and that the unit for the transfer product is transitive.
    It is also immediate that the augmentation is a homomorphism.
    The explicit formula follows from the fact that
    \begin{align*}
        [(G \wr \Sigma_i)/K] \star [(G \wr \Sigma_j)/J] &= \Tr_{i,j}^{n}([(G \wr \Sigma_i)/K] \boxtimes [(G \wr \Sigma_j)/J]) \\ &= \Tr_{i,j}^{n}([(G \wr \Sigma_i \times G \wr \Sigma_j)/(K \times J)]) \\
    &= [(G \wr \Sigma_n)/(K \times J)]. \qedhere
    \end{align*}
\end{proof}
\begin{corollary}
    The graded ring $A(G \wr \Sigma)$ is isomorphic to the graded monoid ring $\Z[\Conj(G \wr \Sigma)]$ where the grading is given by the augmentation as in \ref{subsec:monoid rings}.
\end{corollary}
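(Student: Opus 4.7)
The plan is to directly apply the universal property of the monoid ring. By the preceding proposition, the canonical map $\iota \colon \Conj(G \wr \Sigma) \hookrightarrow A(G \wr \Sigma)$ sending $[H]$ to the basis element $[(G \wr \Sigma_n)/H]$ is a homomorphism of augmented monoids, where $A(G \wr \Sigma)$ is viewed as a multiplicative monoid under the transfer product. Hence by the universal property of the monoid ring recalled in \ref{subsec:monoid rings}, $\iota$ extends uniquely to a map of $\N$-graded rings
\[ \tilde\iota \colon \Z[\Conj(G \wr \Sigma)] \to A(G \wr \Sigma). \]

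First I would observe that $\tilde\iota$ respects the grading by construction: the degree $n$ summand of $\Z[\Conj(G \wr \Sigma)]$ is freely spanned by elements of augmentation $n$, i.e.\ by $[H]$ with $H \subseteq G \wr \Sigma_n$, and $\tilde\iota$ sends each such basis element into $A(G \wr \Sigma_n)$.

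Next I would verify that $\tilde\iota$ is an isomorphism of abelian groups. Both source and target are free abelian groups: the source has basis $\Conj(G \wr \Sigma) = \coprod_n \Conj(G \wr \Sigma_n)$ (as a set, forgetting the monoid structure), and the target has the canonical basis $\bigcup_n \{[(G \wr \Sigma_n)/H] : [H] \in \Conj(G \wr \Sigma_n)\}$ coming from the orbit decomposition of finite $G \wr \Sigma_n$-sets. The map $\tilde\iota$ sends one basis bijectively to the other, so it is an isomorphism of graded abelian groups, and therefore an isomorphism of graded rings.

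There is no real obstacle here; the content of the statement is entirely supplied by the preceding proposition, which already identifies the multiplicative structure on basis elements with the monoid operation on $\Conj(G \wr \Sigma)$. The corollary is just the observation that a graded ring whose underlying abelian group is freely generated by a submonoid of its multiplicative structure, with the grading matching an augmentation of that monoid, is the corresponding graded monoid ring.
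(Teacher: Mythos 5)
Your argument is correct and is exactly the intended one; the paper states this corollary without proof because it follows from the preceding proposition precisely as you describe, by noting that the transfer product on the canonical basis of $A(G \wr \Sigma)$ reproduces the monoid operation on $\Conj(G \wr \Sigma)$ and that both sides are free abelian on the same indexing set. One small slip in presentation: \ref{subsec:monoid rings} only defines the functor $M \mapsto \Z[M]$ from augmented monoids to graded rings and does not actually state a universal property, so you are invoking the standard monoid-ring adjunction rather than anything explicitly recalled there; and formally the map $\iota$ is a monoid homomorphism into the multiplicative monoid of $A(G \wr \Sigma)$ compatible with the grading, rather than a homomorphism of augmented monoids (the full multiplicative monoid of the ring carries no augmentation). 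Neither of these affects the argument.
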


We now consider an augmented monoid homomorphism $\Parts(G) \to \Conj(G \wr \Sigma)$.
This is easy to construct as $\Parts(G)$ is a free commutative monoid.
\begin{definition} \label{def:alpha}
    Let 
    \[ \alpha \colon \Parts(G) \to \Conj(G \wr \Sigma) \]
    be the augmented monoid homomorphism induced by sending the generator $([H],m) \in \Parts(G)$ to $[H \wr \Sigma_m] \in \Conj(G \wr \Sigma_m)$.
    For $\lambda \in \Parts(G,n)$, let $[(G \wr \Sigma_n)/\alpha(\lambda)]$ denote the transitive $G \wr \Sigma_n$-set corresponding to $\alpha(\lambda)$ under the canonical bijection between conjugacy classes of subgroups and isomorphism classes of transitive sets.
    Explicitly, $[(G \wr \Sigma_n)/\alpha(\lambda)] = [(G \wr \Sigma_n)/H]$ for any representative $H \in \alpha(\lambda)$ of the conjugacy class $\alpha(\lambda)$.
\end{definition}
By construction, $\alpha$ is augmented i.e.\ $\alpha(\lambda) \in \Conj(G \wr \Sigma_{\|\lambda\|})$ for each $\lambda$.
Accordingly, $\alpha$ restricts to maps $\Parts(G,n) \to \Conj(G \wr \Sigma_n)$.
The homomorphism $\alpha$ has the explicit formula
\[ \alpha(\lambda) = \bigstar_{[H],m} [H \wr \Sigma_m]^{\star \lambda_{[H],m}} = \left[ \prod_{[H],m} (H \wr \Sigma_m)^{\times \lambda_{[H],m}} \right] \]
for $\lambda \in \Parts(G)$, where $\prod_{[H],m} (H \wr \Sigma_m)^{\times \lambda_{[H],m}} \subseteq \prod_{[H],m} (G \wr \Sigma_m)^{\times \lambda_{[H],m}} \subseteq G \wr \Sigma_{\|\lambda\|}$ as in \ref{subsec:fin sets}.

Notice that $\alpha$ is injective.
This can be proved directly from the definition, but it turns out that $\alpha$ admits a canonical retract. 
This retract is described in \ref{sec:beta}. 
\begin{definition}
    Let $\AA(G) = \bigoplus_{n \geq 0} \AA(G,n)$.
    By \ref{prop:AAstar}, this is a commutative graded subring of $A(G \wr \Sigma)$.
\end{definition}

\begin{proposition}\label{prop:transitive power}
The total power operation takes the isomorphism class of a transitive $G$-set to the isomorphism class of a transitive $G \wr \Sigma_n$-set.
In particular,
\[ \P_n([G/H]) = [(G/H)^{\times n}] = [(G\wr \Sigma_n)/(H \wr \Sigma_n)]. \]
\end{proposition}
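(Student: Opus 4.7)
The first equality $\P_n([G/H]) = [(G/H)^{\times n}]$ is just the definition of the total power operation recalled in Subsection \ref{subsec:pow ops}. So the substance of the claim is the isomorphism of $G \wr \Sigma_n$-sets $(G/H)^{\times n} \cong (G \wr \Sigma_n)/(H \wr \Sigma_n)$, which is precisely the fundamental isomorphism advertised in the opening line of the introduction. The plan is to prove this via the orbit--stabilizer theorem applied to a natural basepoint, using the explicit description of the $G \wr \Sigma_n$-action on $X^{\times n}$ from Subsection \ref{subsec:wr prod}.

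First, I would designate the basepoint $p = (eH, eH, \ldots, eH) \in (G/H)^{\times n}$. To check transitivity, take any tuple $(g_1 H, \ldots, g_n H)$ and observe that the element $((g_1, \ldots, g_n), 1) \in G \wr \Sigma_n$ sends $p$ to this tuple under the formula $((\bar g, \sigma) x)_i = \bar g_i x_{\sigma^{-1}(i)}$ from Subsection \ref{subsec:wr prod}; the permutation $\sigma = 1$ fixes the indices, and the $\bar g_i$ translate the basepoints.

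Next, I would compute $\Stab(p)$. An element $(\bar g, \sigma) \in G \wr \Sigma_n$ fixes $p$ iff for every $i$, $\bar g_i \cdot (p_{\sigma^{-1}(i)}) = p_i$; since every entry of $p$ equals $eH$, this reduces to the condition $\bar g_i \cdot eH = eH$, i.e., $\bar g_i \in H$, with no constraint on $\sigma$. Hence $\Stab(p) = H^{\times n} \rtimes \Sigma_n = H \wr \Sigma_n$ as a subgroup of $G \wr \Sigma_n$. Combining transitivity with this stabilizer calculation yields the desired isomorphism $(G/H)^{\times n} \cong (G \wr \Sigma_n)/(H \wr \Sigma_n)$ of $G \wr \Sigma_n$-sets.

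There is really no obstacle here — the only point to be careful about is getting the action formula and indexing conventions exactly right so that the fixedness condition collapses cleanly to $\bar g_i \in H$ for all $i$, independent of $\sigma$. This is the ultimate reason why the whole theory developed in this paper is governed by the combinatorics of wreath products.
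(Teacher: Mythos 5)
Your proof is correct and follows the same approach as the paper: the paper simply cites the isomorphism $(G/H)^{\times n} \cong (G \wr \Sigma_n)/(H \wr \Sigma_n)$ as known (it is stated as the opening motivation of the introduction), and you are supplying the standard orbit--stabilizer verification of exactly that isomorphism. The basepoint choice, transitivity check, and stabilizer computation are all accurate with respect to the action formula $((\bar g,\sigma)x)_i = \bar g_i x_{\sigma^{-1}(i)}$ from Subsection~\ref{subsec:wr prod}.
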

\begin{proof}
This follows from the isomorphism of $G \wr \Sigma_n$-sets
\[ (G/H)^{\times n} \cong (G\wr \Sigma_n)/(H \wr \Sigma_n). \qedhere \]
\end{proof}

\begin{proposition} \label{prop:AAMonRing}
The graded subring $\AA(G)$ is the image of the injective map of graded rings $\Z[\Parts(G)] \xhookrightarrow{\alpha} \Z[\Conj(G \wr \Sigma)] \cong A(G \wr \Sigma)$, where the degree of $\lambda \in \Parts(G)$ is $\|\lambda\|$.
In particular, $\AA(G)$ is canonically isomorphic to the graded monoid ring $\Z[\Parts(G)]$.
\end{proposition}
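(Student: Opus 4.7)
The plan is to verify three things about the map $\alpha \colon \Z[\Parts(G)] \to \Z[\Conj(G \wr \Sigma)] \cong A(G \wr \Sigma)$ obtained by linearly extending the augmented monoid map of \ref{def:alpha}: that it is a graded ring homomorphism, that its image equals $\AA(G)$, and that it is injective. The graded ring property is automatic, since a monoid homomorphism respecting augmentations induces a grading-preserving ring map on monoid rings, using the isomorphism $A(G \wr \Sigma) \cong \Z[\Conj(G \wr \Sigma)]$ noted just before \ref{def:alpha}.

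For the forward containment $\im(\alpha) \subseteq \AA(G)$, I would first treat a single generator $([H], m) \in \Parts(G)$, whose image $\alpha([H], m) = [H \wr \Sigma_m]$ represents the transitive $G \wr \Sigma_m$-set $(G \wr \Sigma_m)/(H \wr \Sigma_m) \cong (G/H)^{\times m} = \P_m([G/H])$ by \ref{prop:transitive power}; this exhibits $\alpha([H], m) \in \AA(G, m)$. A general $\alpha(\lambda)$ is then a transfer product of such generators, which stays in $\AA(G)$ by \ref{prop:AAstar}.

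For the reverse containment $\AA(G) \subseteq \im(\alpha)$, I would show every transitive submissive $G \wr \Sigma_n$-set is isomorphic to $(G \wr \Sigma_n)/\alpha(\lambda)$ for some $\lambda \in \Parts(G, n)$. Such a set is a $G \wr \Sigma_n$-orbit of $Y^{\times n}$ for some finite $G$-set $Y$; writing $Y \cong \coprod_{i \in I} G/H_i$, I would track a tuple in $Y^{\times n}$ by its ``labeling function'' $[n] \to I$ recording which orbit copy each coordinate lies in. Because each $G/H_i$ is $G$-transitive, two tuples are in the same $G \wr \Sigma_n$-orbit exactly when their labeling functions share the same fiber-size profile modulo the $\Sigma_n$-action on $[n]$, and an explicit calculation shows the stabilizer of a representative tuple is $\prod_{i \in I} H_i \wr \Sigma_{c(i)}$, where $c(i)$ is the size of the fiber over $i$. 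Grouping by the conjugacy class of $H_i$ and by the value of $c(i)$ then exhibits this stabilizer as conjugate to $\alpha(\lambda)$, where $\lambda_{[H], m}$ counts the indices $i \in I$ with $[H_i] = [H]$ and $c(i) = m$.

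Injectivity follows once the canonical retract $\beta$ from \ref{sec:beta} is established with $\beta \circ \alpha = \id$, which forces $\alpha \colon \Parts(G) \to \Conj(G \wr \Sigma)$ to be injective; injectivity then lifts to the monoid rings because $\alpha$ sends distinct canonical basis elements to distinct canonical basis elements. The main obstacle I anticipate is the orbit analysis in the reverse containment: one must carefully identify the stabilizer of a typical orbit of $Y^{\times n}$ and recognize it, after conjugation by a chosen element of $G \wr \Sigma_n$, as the standard embedding $\prod_{[H], m} (H \wr \Sigma_m)^{\times \lambda_{[H], m}} \subseteq G \wr \Sigma_n$ prescribed by $\alpha(\lambda)$ and the conventions of \ref{subsec:fin sets}.
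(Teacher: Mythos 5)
Your proposal is correct, and the forward containment matches the paper's in substance: both identify $\alpha(\lambda)$ as the $\star$-product $\bigstar_{[H],m}\P_m([G/H])^{\star\lambda_{[H],m}}$; you route through \ref{prop:AAstar} after placing each generator $\P_m([G/H])$ in $\AA(G,m)$, while the paper exhibits the whole product at once as a summand of $\P_n\bigl(\sum_{[H],m}\lambda_{[H],m}[G/H]\bigr)$ and appeals to submissivity via \ref{prop:AAisKsub}. The reverse containment is where you genuinely diverge. The paper argues indirectly: it observes that every summand in the expansion of $\P_n(\sum_k X_k)$ for $X_k$ transitive is of the form $[(G\wr\Sigma_n)/\alpha(\lambda)]$, invokes \ref{cor:pos enough} to conclude that $\im(\alpha)$ contains the full image of $\P_n$ on $A(G)$, and then uses the minimality in the definition of $\AA(G)$. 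You instead do a direct orbit-and-stabilizer analysis: decompose $Y \cong \coprod_{i}G/H_i$, track a tuple by its labeling function $[n]\to I$, compute the stabilizer of a canonical representative to be $\prod_i H_i\wr\Sigma_{c(i)}$, and conjugate this into the standard form $\alpha(\lambda)$. That computation is correct (it is precisely the $n$-fold generalization of $(G/H)^{\times n}\cong (G\wr\Sigma_n)/(H\wr\Sigma_n)$) and has the advantage of proving, en route, the corollary that $\AA(G,n)$ is spanned exactly by the $[(G\wr\Sigma_n)/\alpha(\lambda)]$; the paper's approach is shorter because it reuses the already-established transfer product calculus rather than re-deriving it through orbit counting. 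Your treatment of injectivity via the retract $\beta$ is fine, though note that \ref{sec:beta} comes later; the paper sidesteps this by observing injectivity can also be read off directly, which your stabilizer computation in the reverse containment in fact supplies (distinct $\lambda$ give non-conjugate subgroups).
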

\begin{proof}
    Keeping in mind the isomorphism of \ref{prop:transitive power}, we see that a generator $\lambda \in \Parts(G,n)$ of $\Z[\Parts(G)]$ corresponds to the isomorphism class of transitive $G \wr \Sigma_n$-sets
    \begin{equation} \label{eq:alpha star prod}
        [(G \wr \Sigma_n)/\alpha(\lambda)] = \bigstar_{[H],m} [(G \wr \Sigma_m)/(H \wr \Sigma_m)]^{\star \lambda_{[H],m}} = \bigstar_{[H],m} \P_m([G/H])^{\star \lambda_{[H],m}}.
    \end{equation}
    Note that the formula for the total power operation on an arbitrary sum is
    \begin{equation} \label{eq:power op on a sum}
        \P_n\left(\sum_{k=1}^{m} X_k\right) = \sum_{\substack{i_1+\ldots+i_m = n\\i_1,\ldots,i_m \geq 0}}  \bigstar_{k = 1}^{m} \P_{i_k}(X_k).
    \end{equation}
    It follows that $[(G \wr \Sigma_n)/\alpha(\lambda)]$ appears as a summand in
    \[ \P_n\left(\sum_{[H],m} \lambda_{[H],m} \cdot [G/H] \right) \]
    and hence is the isomorphism class of a submissive $G \wr \Sigma_n$-set.
    By \ref{prop:AAisKsub}, this demonstrates that the image of $\Z[\Parts(G)] \hookrightarrow A(G \wr \Sigma)$ is contained in $\AA(G)$.

    For the reverse inclusion, note that if each of the $X_k$'s in \eqref{eq:power op on a sum} is taken to be an isomorphism class of a transitive $G$-set, then each of the summands in the formula is of the form $[(G \wr \Sigma_n)/\alpha(\lambda)]$ for some $\lambda$.
    Thus, $\Z[\Parts(G)] \hookrightarrow A(G \wr \Sigma)$ contains all the basis elements necessary for it to contain the image of $\P_n \colon A^+(G) \to A(G \wr \Sigma_n)$ and thus the image of $\P_n \colon A(G) \to A(G \wr \Sigma_n)$ by \ref{cor:pos enough}.
    This demonstrates that the image of $\Z[\Parts(G)] \hookrightarrow A(G \wr \Sigma)$ contains $\AA(G)$, which completes the proof.
\end{proof}

\begin{corollary}
    The commutative graded ring $\AA(G)$ is canonically isomorphic to the graded monoid ring $\Z[\Parts(G)]$.
\end{corollary}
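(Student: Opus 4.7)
The plan is to observe that this corollary is essentially a repackaging of \ref{prop:AAMonRing}, so the work is almost entirely already done. The proposition constructs an injective graded ring homomorphism $\alpha \colon \Z[\Parts(G)] \hookrightarrow A(G \wr \Sigma)$ (injective by the remark following \ref{def:alpha}, and a graded ring map by construction via the identification $A(G \wr \Sigma) \cong \Z[\Conj(G \wr \Sigma)]$ and the fact that $\alpha$ is an augmented monoid map), and then identifies the image of this map with the subring $\AA(G) \subseteq A(G \wr \Sigma)$.

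So the first and only step I would carry out is to invoke \ref{prop:AAMonRing} directly: since $\alpha$ is an injective map of graded rings with image equal to $\AA(G)$, the corestriction $\alpha \colon \Z[\Parts(G)] \xrightarrow{\cong} \AA(G)$ is an isomorphism of commutative graded rings. The grading preservation is automatic because $\alpha$ sends a generator $\lambda$ of degree $\|\lambda\|$ (by definition of the grading on $\Z[\Parts(G)]$ via the augmentation) to an element of $\AA(G,\|\lambda\|) \subseteq A(G \wr \Sigma_{\|\lambda\|})$, matching the grading on $\AA(G)$ inherited from $A(G \wr \Sigma)$.

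There is no genuine obstacle here; the corollary is stated separately only for emphasis and ease of later reference, since \ref{prop:AAMonRing} already records the identification explicitly in its final sentence. The only thing worth mentioning in the written proof is a one-line pointer back to the proposition, perhaps noting that the map realizing the isomorphism is $\alpha$ from \ref{def:alpha}.
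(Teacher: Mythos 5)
Your proposal is correct and matches the paper's intent exactly: the corollary is simply the final sentence of \ref{prop:AAMonRing} restated for emphasis, and the paper itself gives no separate proof. Invoking the proposition and corestricting $\alpha$ to its image $\AA(G)$ is all that is needed.
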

\begin{corollary}
    The subgroup $\AA(G,n) \subseteq A(G \wr \Sigma_n)$ is the subgroup generated by the isomorphism classes of transitive $G \wr \Sigma_n$-sets of the form $[(G \wr \Sigma_n)/\alpha(\lambda)]$ for $\lambda \in \Parts(G,n)$.
\end{corollary}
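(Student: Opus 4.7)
The plan is to deduce this as a direct degree-$n$ specialization of the graded isomorphism $\AA(G) \cong \Z[\Parts(G)]$ established in \ref{prop:AAMonRing}. Since the whole substance of the corollary has already been proved in the preceding proposition, my job is just to unpack it in a single degree.

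First, I would observe that the graded monoid ring $\Z[\Parts(G)]$ has, in degree $n$, the free abelian group on the set $\{\lambda \in \Parts(G) : \|\lambda\| = n\} = \Parts(G,n)$, because the grading on $\Z[\Parts(G)]$ is defined by the augmentation (size) on $\Parts(G)$ as in \ref{subsec:monoid rings}. Next, I would recall that under the composite isomorphism
\[ \Z[\Parts(G)] \xrightarrow{\alpha} \Z[\Conj(G \wr \Sigma)] \cong A(G \wr \Sigma) \]
of \ref{prop:AAMonRing}, the basis element corresponding to a generator $\lambda \in \Parts(G)$ is sent to $[(G \wr \Sigma_{\|\lambda\|})/\alpha(\lambda)]$ by \ref{def:alpha}.

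Taking degree-$n$ parts, the image $\AA(G,n)$ of this isomorphism in degree $n$ is therefore the subgroup of $A(G \wr \Sigma_n)$ spanned by the elements $[(G \wr \Sigma_n)/\alpha(\lambda)]$ for $\lambda \in \Parts(G,n)$. Since each $\alpha(\lambda)$ is a conjugacy class of subgroups of $G \wr \Sigma_n$, each $[(G \wr \Sigma_n)/\alpha(\lambda)]$ is the isomorphism class of a transitive $G \wr \Sigma_n$-set, which is precisely the statement to be proved.

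There is no real obstacle: the corollary is a one-line consequence of \ref{prop:AAMonRing} once one notes that the degree-$n$ summand of $\Z[\Parts(G)]$ has $\Parts(G,n)$ as its canonical basis. (One does not even need the injectivity of $\alpha$ here, since the claim concerns only generators, not a basis; injectivity would only be needed to conclude that the listed elements are pairwise distinct.)
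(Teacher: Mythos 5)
Your proof is correct and follows exactly the route the paper intends: the corollary is stated without proof because it is an immediate degree-$n$ restatement of \ref{prop:AAMonRing}. Your unpacking --- that the degree-$n$ piece of $\Z[\Parts(G)]$ is free on $\Parts(G,n)$ and is sent under $\alpha$ to the span of the $[(G \wr \Sigma_n)/\alpha(\lambda)]$ --- is precisely the intended reasoning, and your parenthetical remark about not needing injectivity of $\alpha$ for the ``generated by'' claim is a correct, if minor, observation.
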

\begin{corollary} \label{cor:AAisPoly}
    The commutative graded ring $\AA(G)$ is an $\N$-graded polynomial ring with generators $[(G/H)^{\times n}] = \P_n([G/H])$ in degree $n$.
\end{corollary}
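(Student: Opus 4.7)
The plan is simply to unpack the identification $\AA(G) \cong \Z[\Parts(G)]$ established in \ref{prop:AAMonRing} and observe that $\Parts(G)$ is free as a commutative monoid. By construction, $\Parts(G) = P(\Conj(G)) = \N\{\Conj(G) \times \N_+\}$ is the free commutative monoid on the set $\Conj(G) \times \N_+$. Consequently, its monoid ring $\Z[\Parts(G)]$ is the polynomial ring
\[ \Z[x_{[H],m} \mid [H] \in \Conj(G),\, m \in \N_+]. \]
Under the grading given by the augmentation $\|\cdot\|$, the generator $([H],m) \in \Parts(G)$ has size $\|([H],m)\| = m$, so the polynomial generator $x_{[H],m}$ lives in degree $m$.

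Next I would identify each polynomial generator with the advertised element of $\AA(G)$. By \ref{def:alpha}, the isomorphism $\Z[\Parts(G)] \xrightarrow{\cong} \AA(G) \subseteq A(G \wr \Sigma)$ sends the monoid generator $([H],m)$ to $[(G \wr \Sigma_m)/\alpha([H],m)] = [(G \wr \Sigma_m)/(H \wr \Sigma_m)]$, and by \ref{prop:transitive power} this is precisely $[(G/H)^{\times m}] = \P_m([G/H])$. This yields exactly the set of generators claimed in the statement, in the claimed degrees.

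There is no genuine obstacle here: the corollary is a direct structural consequence of the preceding identification of $\AA(G)$ as a graded monoid ring on a free commutative monoid, together with the explicit formula for $\alpha$ on generators.
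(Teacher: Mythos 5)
Your proposal is correct and follows essentially the same route as the paper: the paper's proof likewise invokes \ref{prop:AAMonRing} together with the observation that a monoid ring on a free commutative monoid is a polynomial ring, and identifies the generators via the correspondence $([H],n) \leftrightarrow [(G/H)^{\times n}]$. You have simply spelled out the intermediate steps (the explicit formula for $\alpha$, the degree computation via $\|\cdot\|$, and the appeal to \ref{prop:transitive power}) that the paper leaves implicit in its one-sentence proof.
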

\begin{proof}
    This is immediate from the fact that a monoid ring on a free commutative monoid is a polynomial ring and from the correspondence between $([H],n) \in \Parts(G,n)$  and $[(G/H)^{\times n}] \in \AA(G,n)$.
\end{proof}

\begin{corollary}
    The ring $\AA(G)$ is the smallest subring of $A(G \wr \Sigma)$ containing $\P_n(A(G)) \subseteq A(G \wr \Sigma_n)$ for all $n$. 
\end{corollary}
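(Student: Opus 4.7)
The plan is to prove the two containments separately, both of which follow quickly from results already established in this section.

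For the first containment, namely that $\AA(G)$ contains $\P_n(A(G))$ for every $n$, I would simply invoke the definition of $\AA(G,n) \subseteq A(G \wr \Sigma_n)$ as the smallest summand of $A(G \wr \Sigma_n)$ spanned by canonical basis elements whose span contains $\P_n(A(G))$. So $\P_n(A(G)) \subseteq \AA(G,n) \subseteq \AA(G)$ for each $n$, and $\AA(G)$ is indeed a subring of $A(G \wr \Sigma)$ by \ref{prop:AAstar}. Hence $\AA(G)$ is one subring of $A(G \wr \Sigma)$ containing $\P_n(A(G))$ for all $n$.

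For the second containment, suppose $R \subseteq A(G \wr \Sigma)$ is any subring containing $\P_n(A(G))$ for all $n \geq 0$. I would appeal to \ref{cor:AAisPoly}, which identifies $\AA(G)$ with the polynomial ring generated by the elements $\P_n([G/H]) = [(G/H)^{\times n}]$ as $[H]$ ranges over $\Conj(G)$ and $n$ ranges over $\N$. Each such generator lies in $\P_n(A(G))$ and therefore in $R$. Because $R$ is a subring, it contains every polynomial in these generators, so $R \supseteq \AA(G)$. Combining the two containments shows $\AA(G)$ is the smallest such subring.

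There is no real obstacle: the content has already been absorbed by the preceding results. The only subtlety worth flagging explicitly is that the polynomial generators of $\AA(G)$ are precisely of the form $\P_n([G/H])$ (rather than a larger collection of images), which is exactly what \ref{cor:AAisPoly} gives us. Everything else is a one-line consequence of being a subring.
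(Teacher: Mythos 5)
Your proof is correct and follows essentially the same route as the paper: the key step, containment of $\AA(G)$ in any subring $R$ containing all $\P_n(A(G))$, uses \ref{cor:AAisPoly} in exactly the same way. The only (minor) divergence is that for the other containment $\P_n(A(G)) \subseteq \AA(G)$ you invoke the definition of $\AA(G,n)$ directly, whereas the paper's proof routes through \ref{cor:pos enough} (the reduction from $A(G)$ to $A^+(G)$); since the definition of $\AA(G,n)$ is stated for the image of $\P_n$ on all of $A(G)$, your shortcut is valid and slightly more direct.
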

\begin{proof}
    Use \ref{cor:AAisPoly} together with the fact that containing the image of $\P_n \colon A^+(G) \to A(G \wr \Sigma_n)$ is enough to contain the image of $\P_n \colon A(G) \to A(G \wr \Sigma_n)$ as in \ref{cor:pos enough}.
\end{proof}
\begin{corollary}
    The summand $\AA(G,n)$ is the smallest subgroup of $A(G \wr \Sigma_n)$ containing the image of the power operation $\P_n \colon A(G) \to A(G \wr \Sigma_n)$ and the image of the transfer product $\star \colon \AA(G,i) \tensor \AA(G,j) \to A(G \wr \Sigma_n)$ for all $i+j=n$ with $i,j>0$.
\end{corollary}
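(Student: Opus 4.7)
The plan is to prove two inclusions, one of which is essentially immediate and the other of which reduces to the polynomial structure of $\AA(G)$.

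For the easy inclusion, note that $\AA(G,n)$ contains the image of $\P_n$ by the very definition of $\AA(G,n)$, and by \ref{prop:AAstar} it also contains the image of the transfer product $\star \colon \AA(G,i) \tensor \AA(G,j) \to A(G \wr \Sigma_n)$ for each $i+j=n$ with $i,j>0$. Hence any subgroup of $A(G \wr \Sigma_n)$ satisfying the stated universal property is contained in $\AA(G,n)$.

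For the reverse inclusion, the plan is to let $B \subseteq A(G \wr \Sigma_n)$ be any subgroup containing these images and to show $\AA(G,n) \subseteq B$. By \ref{prop:AAMonRing}, $\AA(G,n)$ is freely generated as an abelian group by the classes $[(G \wr \Sigma_n)/\alpha(\lambda)]$ for $\lambda \in \Parts(G,n)$, so it suffices to show that each such basis element lies in $B$. The key input is equation \eqref{eq:alpha star prod}, which expresses each basis element as a transfer product
\[ [(G \wr \Sigma_n)/\alpha(\lambda)] = \bigstar_{[H],m} \P_m([G/H])^{\star \lambda_{[H],m}}. \]
If $|\lambda|=1$, then $\lambda$ consists of a single part $([H],n)$ and the basis element is precisely $\P_n([G/H])$, which lies in $B$ by hypothesis. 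If $|\lambda| \geq 2$, decompose $\lambda = \kappa + \mu$ with both $\kappa,\mu$ nonempty and set $i=\|\kappa\|$ and $j=\|\mu\|$, so that $i+j=n$ and $i,j>0$; the formula above then factors the basis element as a transfer product of an element of $\AA(G,i)$ with an element of $\AA(G,j)$, and this product lies in $B$ by hypothesis.

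There is no real obstacle in this argument; all the substantive work has already been done in establishing the monoid ring description of $\AA(G)$, which automatically decomposes each additive generator of $\AA(G,n)$ into a transfer product of lower-degree pieces whenever the indexing partition has more than one part.
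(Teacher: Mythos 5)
Your proof is correct and follows the route the paper implicitly intends: the corollary is stated without a separate proof precisely because it follows, as you argue, from the monoid ring description of $\AA(G)$ in \ref{prop:AAMonRing}, with each additive generator $[(G \wr \Sigma_n)/\alpha(\lambda)]$ either equal to $\P_n([G/H])$ when $|\lambda|=1$ or factored as a transfer product of lower-degree generators when $|\lambda|\geq 2$. Both inclusions are argued correctly, and the case split on $|\lambda|$ is exactly the observation the paper leaves to the reader.
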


\section{A universal family of power operations} \label{sec:universalpowerops}
In this section, we provide a description of $\AA(G) = \bigoplus_{n \geq 0}\AA(G,n)$ as an object carrying a universal family of power operations out of $A(G)$. 

\begin{definition}
Let $(A,+)$ be an abelian group and $R = \bigoplus_{n \geq 0} R_n$ be a commutative graded ring.
An $R$-valued family of power operations on $A$ will refer to a collection of functions $P_n \colon A \to R_n$ for $n\geq 0$ such that $P_0(x)=1$ for all $x \in A$, $P_n(0)=0$ for $n>0$, and 
\[ 
\displaystyle P_n(x+y) = \sum_{i+j=n} P_i(x)P_j(y), 
\]
for all $x,y \in A$.
We denote the collection of $R$-valued families of power operations on $A$ as $\Pops(A,R)$. 
\end{definition}

This construction assembles into a functor
\[ \Pops \colon \Ab^\op \times \CGrRing \to \Set, \]
where $\CGrRing$ denotes the category of commutative graded rings (maps preserve the grading).

\begin{proposition} \label{prop:powcorep}
    The functor $\Pops$ is corepresentable in the first variable.
    That is, there is a functor 
    \[ \Pow\colon \Ab \to \CGrRing \]
    with a natural bijection
    \[ \Pops(A,R) \cong \CGrRing(\Pow(A),R). \]
\end{proposition}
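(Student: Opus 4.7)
The plan is to construct $\Pow(A)$ explicitly by generators and relations, then verify the universal property directly. Let $T(A)$ be the free commutative $\N$-graded ring on formal symbols $P_n(x)$ for each $n \geq 0$ and $x \in A$, with $P_n(x)$ placed in degree $n$. Define $\Pow(A) = T(A)/I$, where $I$ is the two-sided ideal generated by $P_0(x) - 1$ for all $x \in A$, by $P_n(0)$ for all $n > 0$, and by $P_n(x+y) - \sum_{i+j=n} P_i(x) P_j(y)$ for all $n \geq 0$ and $x,y \in A$. Each generator of $I$ is homogeneous of a single degree, so $I$ is a homogeneous ideal and $\Pow(A)$ inherits an $\N$-grading from $T(A)$.

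Next, I would observe that the images of $P_n(x)$ in $\Pow(A)_n$, still denoted $P_n(x)$, assemble into a tautological $\Pow(A)$-valued family of power operations on $A$, since by design the relations we imposed are exactly the three axioms in the definition of a family of power operations.

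For the universal property, given a commutative graded ring $R$ and an $R$-valued family $(Q_n)_{n \geq 0}$ of power operations on $A$, the universal property of the free commutative graded ring $T(A)$ produces a unique graded ring homomorphism $T(A) \to R$ sending $P_n(x) \mapsto Q_n(x) \in R_n$. The axioms satisfied by $(Q_n)$ guarantee that the generators of $I$ lie in the kernel, so this descends to a unique graded ring map $\Pow(A) \to R$. Conversely, any graded ring map $\Pow(A) \to R$ pulls back the tautological family to an $R$-valued family on $A$, and these two constructions are visibly mutually inverse, giving the desired bijection. Functoriality in $A$ is then immediate: a group homomorphism $f \colon A \to B$ induces $T(A) \to T(B)$ by $P_n(x) \mapsto P_n(f(x))$, and because $f$ preserves $0$ and addition, this map carries the generators of the ideal defining $\Pow(A)$ into the ideal defining $\Pow(B)$, descending to $\Pow(f) \colon \Pow(A) \to \Pow(B)$. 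Naturality of the bijection in both variables then follows at once from the construction.

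The argument is essentially formal, so there is no real obstacle. The only point that merits care is checking that the relators are homogeneous, which is transparent by inspection of their degrees. Everything else is the standard verification that a functor presented by generators and relations is corepresentable.
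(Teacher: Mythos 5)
Your construction---generators $P_n(x)$ in degree $n$ modulo the homogeneous ideal encoding the three axioms, then verifying the universal property directly---is exactly the paper's argument, just spelled out in more detail (homogeneity of the relators, the two-sided inverse, functoriality in $A$). No discrepancies.
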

\begin{proof}
    This is best seen in terms of generators and relations.
    Let $\Pow(A)$ be the commutative graded ring generated in degree $n$ by elements of the form $P_n(a)$ for each $a\in A$ subject to the relations $P_0(a)=1$, $P_n(0)=0$ for $n>0$, and 
    \[ 
    \displaystyle P_n(x+y) = \sum_{i+j=n} P_i(x)P_j(y). 
    \]
    By construction, we have a natural isomorphism $\Pops(A,R) \cong \CGrRing(\Pow(A),R)$.
\end{proof}
\begin{corollary} \label{cor:surjpow}
    If $f \colon A \to B$ is a surjective homomorphism of abelian groups, the induced map $\Pow(f)\colon \Pow(A) \to \Pow(B)$ is surjective.
\end{corollary}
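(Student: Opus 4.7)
The plan is to argue that $\Pow(B)$ is generated as a ring by the elements $P_n(b)$ for $b \in B$ and $n \geq 0$, and that each such generator lies in the image of $\Pow(f)$.

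First I would recall from the construction in the proof of \ref{prop:powcorep} that $\Pow(B)$ is presented as the commutative graded ring with generators $P_n(b)$ for $b \in B$ and $n \geq 0$, modulo the relations listed there. In particular, the set $\{P_n(b) : b \in B, n \geq 0\}$ generates $\Pow(B)$ as a ring. Next I would identify how $\Pow(f)$ acts on generators: by naturality of the corepresentation, $\Pow(f)$ is the unique graded ring map sending $P_n(a) \mapsto P_n(f(a))$, corresponding to the family of power operations $(a \mapsto P_n(f(a)))_{n \geq 0}$ on $A$ valued in $\Pow(B)$.

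Given surjectivity of $f$, for each $b \in B$ I may choose $a \in A$ with $f(a) = b$, whence $P_n(b) = \Pow(f)(P_n(a))$ lies in the image of $\Pow(f)$. Since the image of a ring homomorphism is a subring and contains every generator of $\Pow(B)$, it must equal $\Pow(B)$, proving surjectivity.

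No serious obstacle arises; the argument is a direct consequence of the generators-and-relations presentation of $\Pow(B)$ and the fact that the generators of $\Pow(B)$ are indexed by elements of $B$.
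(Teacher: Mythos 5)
Your proof is correct and follows the paper's argument exactly: both use the generators-and-relations presentation of $\Pow(B)$ from \ref{prop:powcorep} and observe that $\Pow(f)$ hits every generator $P_n(b)$ because $f$ is surjective. The paper simply states this in one line; you have spelled out the same reasoning in more detail.
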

\begin{proof}
    The map $\Pow(f)$ surjects onto the generators of $\Pow(B)$ in the above description.
\end{proof}

Let $R$ be a commutative graded ring and let $\widehat{R} = \prod_{n\geq 0} R_n$ denote the completion of $R$ at the system of ideals $R_{\geq n}$ for $n \in \N$. 
We refer to $\widehat R$ as the formal completion of $R$.
The formal completion $\widehat R$ inherits a ring structure from $R$.
The addition is component wise, and the multiplication is given by
\[ (rs)_n = \sum_{\substack{i+j=n \\ i,j \geq 0}} r_i s_j. \]
This is motivated by identifying $r \in \widehat R$ with the infinite sum $\sum_{n \geq 0} r_n$.
The units of the formal completion are exactly those $r \in \widehat R$ such that $r_0$ is a unit in $R_0$.
The corresponding inversion formula for elements of this form is identical to the inversion formula for formal power series.
A unit is called strict when $r_0=1$.
Let $\widehat{R}^s$ denote the group of strict units i.e.\ the group
\[ \widehat{R}^s = \left\{ r \in \widehat{R} \mid r_0 = 1 \right\}. \]

\begin{proposition} \label{prop:powrep}
    The functor $\Pops(A,R)$ is representable in the second variable by the group of strict units in the formal completion of $R$. 
    That is, there is a natural bijection
    \[ \Pops(A,R) \cong \Ab(A, \widehat{R}^s). \]
\end{proposition}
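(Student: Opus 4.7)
The plan is to produce mutually inverse natural transformations between the two functors $\Pops(A,-)$ and $\Ab(A,\widehat{(-)}^s)$ in the second variable. The key observation is that the defining relations for an $R$-valued family of power operations on $A$ exactly mirror the formula for multiplication in the formal completion $\widehat R$, once we package $(P_n(a))_{n \geq 0}$ as a single element $\sum_n P_n(a) \in \widehat R$.

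First I would define the forward map
\[ \Phi \colon \Pops(A,R) \to \Ab(A, \widehat{R}^s) \colon (P_n)_{n\geq 0} \mapsto \Bigl( a \mapsto (P_n(a))_{n \geq 0} \Bigr). \]
The target really does land in $\widehat R^s$ because the condition $P_0(a)=1$ says the degree-zero component equals $1$, making $(P_n(a))_n$ a strict unit. To see that $\Phi(P)$ is a group homomorphism, unravel the multiplication on $\widehat R$: for $a,b \in A$,
\[ \Phi(P)(a) \cdot \Phi(P)(b) = \left( \sum_{i+j=n} P_i(a)P_j(b) \right)_{n \geq 0} = (P_n(a+b))_{n \geq 0} = \Phi(P)(a+b), \]
where the middle equality is exactly the additivity relation defining a family of power operations. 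That $\Phi(P)$ sends $0 \in A$ to $1 \in \widehat{R}^s$ follows from $P_0(0)=1$ and $P_n(0)=0$ for $n>0$.

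Next I would construct the inverse $\Psi \colon \Ab(A,\widehat{R}^s) \to \Pops(A,R)$ by sending a homomorphism $\phi \colon A \to \widehat{R}^s$ to the family $(P_n)_{n \geq 0}$ with $P_n(a) = \phi(a)_n \in R_n$. Checking that this family satisfies the axioms is a direct reading-off: $P_0(a)=1$ because $\phi(a) \in \widehat R^s$ is a strict unit; $P_n(0) = 0$ for $n>0$ because $\phi$ sends $0$ to the identity $1 \in \widehat R^s$; and the additivity relation
\[ P_n(a+b) = \phi(a+b)_n = (\phi(a)\phi(b))_n = \sum_{i+j=n} P_i(a)P_j(b) \]
is a restatement of $\phi$ being a homomorphism. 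That $\Phi$ and $\Psi$ are mutually inverse is tautological from the definitions, as both amount to the identification of a sequence $(r_n)_{n \geq 0}$ with its formal sum in $\widehat R$.

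The only remaining item is naturality in $R$, which follows from the fact that a graded ring map $R \to R'$ induces a ring map $\widehat R \to \widehat{R'}$ that sends strict units to strict units and that the forward map $\Phi$ is manifestly natural in the component-wise data $(P_n)$. There is no real obstacle to this argument — it is essentially a repackaging exercise — but the one subtlety worth flagging explicitly is that $\widehat R^s$ is genuinely a group under multiplication: its elements are invertible in $\widehat R$ because the standard formal power series inversion formula converges in the $R_{\geq n}$-adic topology, so $\Ab(A,\widehat{R}^s)$ makes sense as stated.
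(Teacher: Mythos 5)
Your proof is correct and follows the same approach as the paper: the paper's proof is a terse version of the same observation, namely that packaging $(P_n(a))_{n\ge 0}$ as an element of $\widehat R$ turns the power-operation axioms into exactly the condition of being a group homomorphism $A \to \widehat{R}^s$. You have simply spelled out the mutually inverse maps and naturality that the paper leaves implicit.
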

\begin{proof}
    For some $R$-valued family of power operations on $A$ and each $a \in A$, the sequence $(P_n(a))_{n \geq 0} \in \widehat R$ is a strict unit.
    The axioms of the power operations exactly correspond to this being a homomorphism $A \to \widehat{R}^s$.
    Thus, $\Pops(A,R) \cong \Ab(A,\widehat{R}^s)$.
\end{proof}
\begin{corollary} \label{cor:montoab}
    Any graded subring of $A(G \wr \Sigma)$ containing the image of $\P_n \colon A^+(G) \to A(G \wr \Sigma_n)$ for all $n$ automatically contains the image of image of $\P_n \colon A(G) \to A(G \wr \Sigma_n)$ for all $n$.
\end{corollary}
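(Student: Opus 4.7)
The plan is to leverage \ref{prop:powrep} together with the universal property of the Grothendieck group $A^+(G) \to A(G)$. Fix a graded subring $S = \bigoplus_n S_n \subseteq A(G \wr \Sigma)$ containing $\P_n(X)$ for every $X \in A^+(G)$ and every $n \geq 0$. Because $S$ is a graded subring, the formal completion $\widehat S$ is a subring of $\widehat{A(G \wr \Sigma)}$, and so the group of strict units $\widehat S^s$ sits as a subgroup of $\widehat{A(G \wr \Sigma)}^s$.

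First I would observe that although \ref{prop:powrep} is stated for abelian groups, the same proof goes through verbatim for commutative monoids: the datum of an $R$-valued family of power operations on a commutative monoid $M$ is the same as a monoid homomorphism $M \to \widehat R^s$, since the axioms $P_0 = 1$, $P_n(0) = 0$ for $n > 0$, and $P_n(x+y) = \sum_{i+j=n} P_i(x) P_j(y)$ say exactly that $x \mapsto (P_n(x))_{n \geq 0}$ sends $0$ to the unit of $\widehat R^s$ and respects multiplication. Applying this to $M = A^+(G)$, the total power operation assembles into a monoid homomorphism
\[ \P \colon A^+(G) \to \widehat{A(G \wr \Sigma)}^s, \]
and the hypothesis on $S$ says exactly that this map factors through the subgroup $\widehat S^s$.

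Next I would invoke the universal property of the Grothendieck group $A^+(G) \to A(G)$: since $\widehat S^s$ is already an abelian group, the monoid homomorphism $\P \colon A^+(G) \to \widehat S^s$ extends uniquely to a group homomorphism $\widetilde{\P} \colon A(G) \to \widehat S^s$. \ref{prop:powrep} in its original form then converts $\widetilde{\P}$ back into an $S$-valued family of power operations $(\widetilde{\P}_n \colon A(G) \to S_n)_{n \geq 0}$ extending the original family on $A^+(G)$. As noted in \ref{subsec:pow ops}, any extension of the power operations from $A^+(G)$ to $A(G)$ is uniquely determined by the inductive formula for $\P_n(X - Y)$, so $\widetilde{\P}_n$ must agree with the ambient $\P_n \colon A(G) \to A(G \wr \Sigma)$. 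In particular $\P_n(A(G)) \subseteq S_n$, as desired.

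I anticipate no serious obstacle; the only slight subtlety is the observation in the first step that the corepresentability and representability results of \ref{prop:powcorep} and \ref{prop:powrep} carry over from abelian groups to commutative monoids, which is immediate from inspection of their proofs. One could alternatively give a direct induction on $n$ showing $\P_n(X - Y) \in S_n$ using the inductive formula in \ref{subsec:pow ops} and closure of $S$ under subtraction and the transfer product, but the corepresentability route reuses the structural language the paper has already developed.
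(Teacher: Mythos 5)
Your proof is correct and follows exactly the paper's own argument: interpret the family of total power operations as a monoid homomorphism $A^+(G) \to \widehat{A(G \wr \Sigma)}^s$ landing in $\widehat S^s$, then invoke the universal property of group completion to extend it to $A(G) \to \widehat S^s$. The paper states this more tersely (in two sentences), while you spell out the intermediate observations — that $\widehat S^s$ is a subgroup of $\widehat{A(G\wr\Sigma)}^s$, that \ref{prop:powrep} holds for commutative monoids, and that the resulting extension agrees with the ambient $\P_n$ by the inductive uniqueness of \ref{subsec:pow ops} — all of which are implicit in the paper's version.
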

\begin{proof}
    There is a monoid homomorphism
    \[ A^+(G) \to \widehat{A(G \wr \Sigma)}^s \colon [X] \mapsto (\P_n([X])_{n \geq 0} \]
    and any monoid homomorphism from $A^+(G)$ to a group extends to a group homomorphism from $A(G)$.
\end{proof}

Recall from \ref{sec:decoratedpartitions} that $P(X)$ denotes the set of partitions decorated by $X$, which becomes a free commutative monoid under formal addition of decorated partitions.

\begin{corollary} \label{cor:basis}
    Let $X$ be a set and let $\Z\{X\}$ be the free abelian group on $X$.
    Then there are canonical isomorphisms of commutative graded rings
    \[ \Pow(\Z\{X\}) \cong \Z[(x,n) \mid x \in X,\, n>0,\, \deg (x,n)=n] \cong \Z[P(X)]. \]
\end{corollary}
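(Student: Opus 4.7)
The plan is to prove the two isomorphisms separately, with the first following from a Yoneda argument based on the two representability results already in hand, and the second being essentially a restatement of the definition of a monoid ring on a free commutative monoid.

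For the second isomorphism $\Z[(x,n) \mid x \in X, n>0, \deg(x,n)=n] \cong \Z[P(X)]$, I would simply unfold the definition of $P(X)$ from \ref{sec:decoratedpartitions}. There, $P(X) = \N\{X \times \N_+\}$ is the free commutative monoid on $X \times \N_+$, augmented by $\|(x,n)\| = n$. The monoid ring of a free commutative monoid on a set $S$ is the polynomial ring $\Z[S]$, and an augmentation $\|\cdot\|\colon S \to \N$ induces the grading placing $s \in S$ in degree $\|s\|$, as recalled in \ref{subsec:monoid rings}. Specializing to $S = X \times \N_+$ gives the second isomorphism directly.

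For the first isomorphism, I would apply the Yoneda lemma by exhibiting a natural isomorphism between the functors corepresented by $\Pow(\Z\{X\})$ and by $R' := \Z[(x,n) \mid x \in X, n>0]$ on $\CGrRing$. By \ref{prop:powcorep}, $\Pow(\Z\{X\})$ corepresents $R \mapsto \Pops(\Z\{X\}, R)$, and by \ref{prop:powrep} this functor is naturally isomorphic to $R \mapsto \Ab(\Z\{X\}, \widehat{R}^s)$. Because $\Z\{X\}$ is the free abelian group on $X$, the latter is $\Set(X, \widehat{R}^s)$. Finally, a strict unit in $\widehat R$ is by definition a tuple $(r_n)_{n \geq 0}$ with $r_0 = 1$ and $r_n \in R_n$ for $n > 0$, so $\widehat R^s$ is in natural bijection with $\prod_{n>0} R_n$. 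Composing these bijections identifies the functor with $R \mapsto \prod_{x \in X}\prod_{n > 0} R_n$. On the other side, a graded ring homomorphism $R' \to R$ is uniquely determined by choosing, for each generator $(x,n)$, an element of $R_n$, giving $\CGrRing(R', R) \cong \prod_{x \in X}\prod_{n>0} R_n$ naturally in $R$. Yoneda then produces the canonical isomorphism $\Pow(\Z\{X\}) \cong R'$.

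There is no real obstacle here: the argument is formal, and the hardest part is only bookkeeping — tracking which generator $(x,n)$ of the polynomial ring corresponds under the isomorphism to the power operation value $P_n(x) \in \Pow(\Z\{X\})_n$. Unwinding the bijections above shows that the isomorphism sends the generator $(x,n)$ to $P_n(x)$, which is both reassuring and makes naturality in $X$ transparent.
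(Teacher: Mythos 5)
Your proof is correct and takes essentially the same approach as the paper: the paper's argument for the first isomorphism is exactly that both $\Pow(\Z\{X\})$ and the polynomial ring corepresent the functor of functions $X \to \widehat{R}^s$, and you have merely unfolded the natural bijections ($\Pops(\Z\{X\},R) \cong \Ab(\Z\{X\},\widehat R^s) \cong \Set(X, \widehat R^s) \cong \prod_{x,n>0} R_n \cong \CGrRing(\Z[(x,n)],R)$) a little more explicitly before invoking Yoneda. The second isomorphism is also handled identically, by unwinding the definition of $P(X)$ as a free commutative monoid.
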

\begin{proof}
    The second isomorphism, between the polynomial ring and the monoid ring, follows from the definition of $P(X)$ as a free commutative monoid.
    For the first isomorphism, we see that $\Pow(\Z\{X\})$ and the polynomial ring $\Z[(x,n)]$ have the same universal property since they both represent functions from $X$ into the group of strict units.
    Explicitly, a map $f \colon \Z[(x,n)] \to R$ is uniquely determined by elements $f(x,n) \in R_n$ for $x \in X$ and $n>0$, which is the data of a function $X \to \widehat{R}^s$. 
\end{proof}

\begin{corollary} \label{cor:powang}
    There is a canonical isomorphism of commutative graded rings
    \[ \Pow(A(G)) \cong \AA(G). \]
\end{corollary}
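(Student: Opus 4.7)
The plan is to obtain the isomorphism as a comparison of two descriptions of the same graded polynomial ring, both indexed by $\Parts(G)$.

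First I would produce the canonical map $\Pow(A(G)) \to \AA(G)$. The total power operation assembles into a family $(\P_n \colon A(G) \to A(G \wr \Sigma_n))_{n \geq 0}$ satisfying exactly the axioms of an $A(G \wr \Sigma)$-valued family of power operations on $A(G)$ (the additivity formula is recalled in \ref{subsec:pow ops}). By the corepresenting property of \ref{prop:powcorep}, this family is classified by a map of commutative graded rings $\Pow(A(G)) \to A(G \wr \Sigma)$. Since $\AA(G) \subseteq A(G \wr \Sigma)$ is a graded subring containing $\P_n(A(G))$ in every degree $n$, the classifying map lands in $\AA(G)$, giving the candidate map
\[ \Phi \colon \Pow(A(G)) \to \AA(G). \]

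Next I would identify both sides with the same graded polynomial ring. On the source, $A(G)$ is free abelian on the canonical basis $\{[G/H]\}_{[H] \in \Conj(G)}$, so $A(G) \cong \Z\{\Conj(G)\}$. Then \ref{cor:basis} (applied to $X = \Conj(G)$) gives
\[ \Pow(A(G)) \cong \Z[\Parts(G)], \]
a polynomial ring with one generator $([H],n)$ in degree $n$ for each $[H] \in \Conj(G)$ and each $n > 0$; under this isomorphism, the generator $([H],n)$ corresponds to the universal class $P_n([G/H])$. On the target, \ref{prop:AAMonRing} (equivalently \ref{cor:AAisPoly}) identifies $\AA(G)$ with the graded monoid ring $\Z[\Parts(G)]$, with the same generator $([H],n)$ corresponding to the isomorphism class $[(G/H)^{\times n}] = \P_n([G/H]) \in \AA(G,n)$.

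Finally I would verify that $\Phi$ matches these descriptions on generators, which finishes the proof. By construction of the classifying map in \ref{prop:powcorep}, $\Phi$ sends the universal generator $P_n([G/H]) \in \Pow(A(G))$ to $\P_n([G/H]) \in \AA(G,n)$; under the identifications of the previous paragraph, both are the generator $([H],n) \in \Z[\Parts(G)]$, so $\Phi$ is the identity on the common polynomial generators and hence is an isomorphism of commutative graded rings. The only real content is the bookkeeping in the preceding sections (\ref{prop:AAMonRing} and \ref{cor:basis}); there is no additional obstacle here, since once both rings are presented as $\Z[\Parts(G)]$ the map $\Phi$ is manifestly the identity on generators.
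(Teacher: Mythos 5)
Your proposal is correct and follows essentially the same route as the paper: identify $\AA(G)$ with $\Z[\Parts(G)]$ via \ref{prop:AAMonRing} and $\Pow(A(G)) = \Pow(\Z\{\Conj(G)\})$ with $\Z[\Parts(G)]$ via \ref{cor:basis}. You go a bit further than the paper in explicitly describing the classifying map and checking it agrees with the composed isomorphism on generators, but this is just the bookkeeping the paper leaves implicit.
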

\begin{proof}
    \ref{prop:AAMonRing} shows that $\AA(G)$ is canonically isomorphic to the graded monoid ring $\Z[\Parts(G)]$. The corollary then follows immediately from \ref{cor:basis} with $X = \Conj(G)$.
\end{proof}

\begin{definition} \label{def:Fang}
    Given an abelian group homomorphism $F \colon A(H) \to A(G)$, let
    \[ \r F\colon \AA(H) \to \AA(G) \]
    denote the map induced by \ref{cor:powang} and $\Pow(F)$.
\end{definition}

\section{Burnside rings and representation rings} \label{sec:RU}

Let $RU(G)$ be the complex representation ring of $G$ and let $RU(G \wr \Sigma) = \bigoplus_{n \geq 0} RU(G \wr \Sigma_n)$. This object was thoroughly studied by Zelevinsky in \cite{Zelevinsky}. 

There is a commutative graded ring structure on $RU(G \wr\Sigma)$ by making use of a transfer multiplication defined similarly to $\star$. 
The $RU(G\wr \Sigma)$-valued family of power operations $(P_n \colon RU(G) \to RU(G \wr \Sigma_n))_{n \geq 0}$ given by $P_n([V]) = [V^{\otimes n}]$ induces a map of commutative graded rings $\Pow(RU(G)) \to RU(G \wr \Sigma)$.
We will show that this map is an isomorphism.
There are at least two ways to do this.
One is to prove results similar to \ref{sec:alpha}, replacing the set of isomorphism classes of transitive $G$-sets with the set of isomorphism classes of irreducible $G$-representations.
Another is to make use of the ``transfer ideal.''
For completeness, we will take the second approach because it also provides another way to prove \ref{cor:powang}.
This result follows easily from the work of Zelevinsky, but we include a complete proof with no claim to originality.

Let $(R, \star)$ be a commutative $\N$-graded ring. Let $I_n \subseteq R_n$ denote the image of the sum of the multiplication maps from lower degrees
\[
\star \colon \bigoplus_{\substack{i+j=n \\ i,j>0}} R_i \otimes R_j \to R_n.
\]
Equivalently, let $I=(R_{>0})^{\star 2}$ denote the square of the irrelevant ideal $R_{>0} = \bigoplus_{n>0} R_n \subseteq R$ and let $I_n$ be the $n$-th graded piece of $I$.

\begin{proposition}
The canonical map of commutative graded rings
\[
\Pow(RU(G)) \to RU(G \wr \Sigma)
\]
is an isomorphism.
\end{proposition}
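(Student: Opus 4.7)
The plan is to follow the same pattern as \ref{cor:powang}: construct the canonical map $\phi \colon \Pow(RU(G)) \to RU(G \wr \Sigma)$ from the universal property of \ref{prop:powcorep} applied to the family $P_n([V]) = [V^{\otimes n}]$, establish surjectivity via a transfer ideal computation, and then conclude injectivity from a rank count.

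The key observation, valid for any abelian group $A$ and any $n \geq 1$, is that $\Pow(A)_n / I_n \cong A$ canonically via $P_n(a) \mapsto a$, where $I = (\Pow(A)_{>0})^{\star 2}$ is the square of the irrelevant ideal. Indeed, the defining relation $P_n(a+b) = \sum_{i+j=n} P_i(a) P_j(b)$ from the proof of \ref{prop:powcorep} reduces modulo $I$ to mere additivity $P_n(a+b) = P_n(a) + P_n(b)$, so $\Pow(A)_n / I_n$ is presented as the abelian group generated by symbols $\{P_n(a) : a \in A\}$ subject only to additivity, which is just $A$. Applied to $A = RU(G)$, this yields $\Pow(RU(G))_n / I_n \cong RU(G)$ with $P_n([V]) \mapsto [V]$ for any $[V] \in \mathrm{Irr}(G)$.

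The main representation-theoretic input is the analogous identification $RU(G \wr \Sigma_n) / I_n \cong RU(G)$ with $[V^{\otimes n}] \mapsto [V]$ for $[V] \in \mathrm{Irr}(G)$. This is extracted from Clifford theory for wreath products or, equivalently, from Zelevinsky's isomorphism $RU(G \wr \Sigma) \cong \bigotimes_{[V] \in \mathrm{Irr}(G)} \Lambda$, under which $[V^{\otimes n}]$ corresponds to the complete homogeneous symmetric function $h_n$ in the $[V]$-th tensor factor. Since the indecomposables of $\Lambda$ are rank-one in each positive degree, generated by $h_n$ modulo decomposables, the target satisfies $RU(G \wr \Sigma_n)/I_n \cong \bigoplus_{[V]} \Z \cong RU(G)$. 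Combining both computations, $\phi \bmod I$ is the identity map $RU(G) \to RU(G)$ in each positive degree.

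A standard graded induction upgrades this to surjectivity of $\phi$: every element of $RU(G \wr \Sigma_n)$ differs from the image of some element of $\Pow(RU(G))_n$ by an element of $I_n$, and $I_n$ is the image of the transfer product $\star$ from lower degrees, which by the inductive hypothesis lies in the image of $\phi$. For injectivity, I count ranks: $\Pow(RU(G))_n$ is free abelian of rank equal to the number of $\mathrm{Irr}(G)$-decorated partitions of $n$ by \ref{cor:basis}, while $RU(G \wr \Sigma_n)$ is free abelian of rank equal to the number of complex irreducible $G \wr \Sigma_n$-representations, which is the same number by Clifford theory. A surjection of free abelian groups of equal finite rank in each degree is an isomorphism. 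The main obstacle lies in the representation-theoretic step identifying $RU(G \wr \Sigma_n)/I_n \cong RU(G)$; its content is essentially Zelevinsky's tensor product decomposition, which is why the excerpt disclaims originality.
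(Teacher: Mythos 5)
Your proposal is correct and follows essentially the same route as the paper's proof: an induction on degree, exploiting that both $\Pow(RU(G))_n/I_n$ and $RU(G\wr\Sigma_n)/I_n$ are canonically $RU(G)$ (the latter via Zelevinsky), that $I_n$ is generated by transfer products from lower degrees already known to be in the image, and then a rank count to upgrade surjectivity to an isomorphism. The only differences are cosmetic: the paper phrases the induction via a map of short exact sequences rather than your element-by-element chase, and it cites \ref{cor:basis} for the identification $\Pow(RU(G))_n/I_n \cong RU(G)$ where you rederive it directly from the generators-and-relations presentation in \ref{prop:powcorep}.
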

\begin{proof}
We prove this by induction on the degree. 

We have a map of exact sequences
\[
\begin{tikzcd}
    \displaystyle\bigoplus_{\substack{i+j=n \\ i,j>0}} \Pow(RU(G))_i \otimes \Pow(RU(G))_j \ar[r] \ar[d, "\cong"', shorten <= -1.8em, pos = 0] & \Pow(RU(G))_n \ar[d] \ar[r] & \Pow(RU(G))_n/I_n \ar[d, "\cong"] \ar[r] & 0 \ar[d] \\ \displaystyle\bigoplus_{\substack{i+j=n \\ i,j>0}} RU(G\wr \Sigma_i) \otimes RU(G\wr \Sigma_j) \ar[r] & RU(G\wr \Sigma_n) \ar[r] & RU(G\wr \Sigma_n)/I_n \ar[r] & 0.
\end{tikzcd}
\]
By induction on $n$, the left vertical map is an isomorphism.
For the right vertical map, \ref{cor:basis} implies that $\Pow(RU(G))_n/I_n \cong RU(G)$ and \cite[Lemma 1.7 and Section 7.2]{Zelevinsky} implies that $RU(G \wr \Sigma_n)/I_n \cong RU(G)$.
By construction, the right map sends $P_n(V)$ to $[V^{\otimes n}]$, which is an isomorphism.
This implies that the middle map is surjective, but since both abelian groups are free of the same rank, the middle map is an isomorphism.
\end{proof}

At this stage, we can give a generalization of the following well-known result \cite[Section 7.3]{Fulton}. Consider the composite of ring maps
\[
\AA(e,n) \to A(\Sigma_n) \to RU(\Sigma_n),
\]
where $RU(\Sigma_n)$ is the complex representation ring of $\Sigma_n$ and $A(\Sigma_n) \to RU(\Sigma_n)$ is the linearization map taking a finite $\Sigma_n$-set to the associated permutation representation.
This map turns out to be an isomorphism.
That is, the linearization map $A(\Sigma_n) \to RU(\Sigma_n)$ splits as a map of commutative rings.

The ring $\AA(e,n)$ has been studied by others.
In particular, in \cite{partialyoung}, Oda, Takegahara, and Yoshida refer to it as the partial Burnside ring relative to the Young subgroups of the symmetric group.
In this sense, the subgroups $H \subseteq G \wr \Sigma_n$ such that $(G \wr \Sigma_n)/H$ is submissive are a generalization of the notion of a Young subgroup of $\Sigma_n$ to $G \wr \Sigma_n$. 

\begin{corollary}
Assume the linearization map $A(G) \to RU(G)$ is surjective (resp. an isomorphism), then the induced map $\AA(G,n) \to RU(G \wr \Sigma_n)$ is surjective (resp. an isomorphism). In particular, the composites
\[
\AA(e, n) \to A(\Sigma_n) \to RU(\Sigma_n)
\]
and
\[
\AA(\Sigma_2, n) \to A(\Sigma_2 \wr \Sigma_n) \to RU(\Sigma_2 \wr \Sigma_n)
\]
are isomorphisms.
\end{corollary}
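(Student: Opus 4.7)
The plan is to identify both $\AA(G)$ and $RU(G \wr \Sigma)$ as values of the functor $\Pow$ — specifically $\AA(G) \cong \Pow(A(G))$ by \ref{cor:powang} and $RU(G \wr \Sigma) \cong \Pow(RU(G))$ by the preceding proposition — and then reduce the corollary to formal properties of $\Pow$ applied to the linearization homomorphism $\ell \colon A(G) \to RU(G)$.

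The bulk of the work will be to identify the map $\AA(G,n) \to RU(G \wr \Sigma_n)$ of the corollary with the degree $n$ piece of $\Pow(\ell)$. First I would observe that the levelwise linearization maps $A(G \wr \Sigma_m) \to RU(G \wr \Sigma_m)$ assemble into a homomorphism of commutative graded rings $A(G \wr \Sigma) \to RU(G \wr \Sigma)$, because linearization commutes with Cartesian products and with transfer along $G \wr \Sigma_i \times G \wr \Sigma_j \subseteq G \wr \Sigma_n$ via the natural isomorphism $\mathbb{C}[\Tr_{i,j}^n(X \boxtimes Y)] \cong \Tr_{i,j}^n(\mathbb{C}[X] \otimes \mathbb{C}[Y])$. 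Composing this graded ring map with $\P_n \colon A(G) \to A(G \wr \Sigma_n)$ yields an $RU(G \wr \Sigma)$-valued family of power operations on $A(G)$, and using $\mathbb{C}[X^{\times n}] \cong \mathbb{C}[X]^{\otimes n}$ as $G \wr \Sigma_n$-representations, this family agrees with the one obtained by composing $\ell$ with the tensor-power family $V \mapsto V^{\otimes n}$. By the universal property of \ref{prop:powcorep}, the two families induce the same graded ring map $\Pow(A(G)) \to \Pow(RU(G))$, namely $\Pow(\ell)$.

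Granted this identification, the surjectivity assertion is immediate from \ref{cor:surjpow} applied in degree $n$, and the isomorphism assertion is immediate from the fact that any functor preserves isomorphisms. For the two concrete cases, $\ell$ is the identity when $G = e$, and when $G = \Sigma_2$ it sends $[\Sigma_2/\Sigma_2] \mapsto [\mathrm{triv}]$ and $[\Sigma_2/e] \mapsto [\mathrm{triv}] + [\mathrm{sign}]$, a unimodular change of basis and hence an isomorphism. The main obstacle is really only the compatibility check of the previous paragraph, guaranteeing that the concrete map $\AA(G,n) \hookrightarrow A(G \wr \Sigma_n) \to RU(G \wr \Sigma_n)$ truly coincides with $\Pow(\ell)$ in degree $n$; once that is in place, the rest of the argument is entirely formal.
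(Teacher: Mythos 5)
Your proposal is correct and takes essentially the same approach as the paper: identify both sides as values of $\Pow$, recognize the map as $\Pow(\ell)$, and apply \ref{cor:surjpow} (respectively, functoriality) together with the fact that $\ell$ is an isomorphism for $G = e$ and $G = \Sigma_2$. The only difference is that you spell out the key compatibility check -- that the levelwise linearization map $A(G\wr\Sigma)\to RU(G\wr\Sigma)$ is a graded ring map, and that $\mathbb{C}[X^{\times n}]\cong\mathbb{C}[X]^{\otimes n}$ identifies the composite $\AA(G,n)\hookrightarrow A(G\wr\Sigma_n)\to RU(G\wr\Sigma_n)$ with $\Pow(\ell)_n$ -- which the paper leaves implicit.
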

\begin{proof}
This follows immediately from \ref{cor:surjpow} together with the fact that the linearization maps $A(e) \to RU(e)$ and $A(\Sigma_2) \to RU(\Sigma_2)$ are isomorphisms.
\end{proof}

\section{$\Parks(G)$} \label{sec:parks}

We seek to produce a kind of character map for $\AA(G)$ analogous to the character map $A(G) \to \Marks(G)$.
In this section we construct and study the target ring.

\begin{definition} \label{def:parks}
    For a finite group $G$ and a commutative ring $R$, let $\Parks(G,R)$ denote the abelian group of finite support functions $\Parts(G) \to R$.
    We give $\Parks(G,R)$ an $\N$-grading by declaring the homogeneous elements of degree $n$ to be those functions supported on partitions of size $n$.
    We identify the graded pieces with the abelian groups $\Parks(G,n,R) = \Fun(\Parts(G,n),R)$ so that $\Parks(G,R) \cong \bigoplus_n \Parks(G,n,R)$.
    When $R$ is omitted, it is assumed to be $\Z$.
\end{definition}

The graded ring $\Parks(G,R)$ has an $R$-linear basis given by the indicator functions
\[ \1_\lambda \colon \kappa \mapsto \begin{cases} 1, & \lambda=\kappa \\ 0, & \text{otherwise} \end{cases} \]
of homogeneous degree $\|\lambda\|$ for $\lambda \in \Parts(G)$.

\begin{proposition} \label{prop:parksprod}
    Define a graded multiplication $\star \colon \Parks(G,R) \otimes \Parks(G,R) \to \Parks(G,R)$ given by
    \[ (f \star g)(\mu) = \sum_{\kappa+\lambda=\mu} \frac{\mu!}{\kappa!\lambda!} f(\kappa)g(\lambda) \]
    for $\mu \in \Parts(G)$.
    This gives $\Parks(G,R)$ the structure of a commutative graded ring.
\end{proposition}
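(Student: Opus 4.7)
The plan is to verify the ring axioms by reducing to the $R$-linear basis of indicator functions $\{\1_\lambda \mid \lambda \in \Parts(G)\}$. First I would confirm that the formula is well-defined: for fixed $\mu \in \Parts(G)$, the decompositions $\mu = \kappa + \lambda$ are in bijection with the finitely many pointwise choices $0 \leq \kappa_{[H],m} \leq \mu_{[H],m}$, so the sum is finite, and the coefficient $\mu!/(\kappa!\lambda!)$ factors componentwise as a product of ordinary binomial coefficients $\prod_{[H],m}\binom{\mu_{[H],m}}{\kappa_{[H],m}}$, hence lies in $\Z$. Thus the formula defines an $R$-bilinear map into $\Parks(G,R)$. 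Compatibility with the grading is immediate from additivity of size: if $f$ is supported in degree $i$ and $g$ in degree $j$, the only possibly nonzero summands in $(f \star g)(\mu)$ require $\|\kappa\|=i$ and $\|\lambda\|=j$, forcing $\|\mu\| = i+j$.

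The next step is to compute the product on basis elements, which unfolds directly to
\[ \1_\kappa \star \1_\lambda = \frac{(\kappa+\lambda)!}{\kappa!\lambda!}\,\1_{\kappa+\lambda}. \]
From this form, commutativity follows from the symmetry of $\kappa!\lambda!$ in $\kappa$ and $\lambda$, and the unit axiom reduces to $\1_0 \star \1_\lambda = \1_\lambda$, which holds since $0! = 1$ (where $0$ denotes the empty partition).

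Associativity is the only axiom that requires a real calculation. Expanding on the basis, both $(\1_\kappa \star \1_\lambda) \star \1_\nu$ and $\1_\kappa \star (\1_\lambda \star \1_\nu)$ equal
\[ \frac{(\kappa+\lambda+\nu)!}{\kappa!\lambda!\nu!}\,\1_{\kappa+\lambda+\nu} \]
by the componentwise trinomial identity
\[ \frac{(\kappa+\lambda)!}{\kappa!\lambda!} \cdot \frac{(\kappa+\lambda+\nu)!}{(\kappa+\lambda)!\nu!} = \frac{(\kappa+\lambda+\nu)!}{\kappa!\lambda!\nu!}, \]
whose right-hand side is manifestly symmetric in $\kappa$, $\lambda$, and $\nu$. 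I do not anticipate any serious obstacle; the verification is a formal consequence of the multi-index multinomial identities applied pointwise, and the only real care required is keeping track of the componentwise factorials that compose the multifactorial notation introduced in \ref{sec:decoratedpartitions}.
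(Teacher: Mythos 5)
Your proof is correct and is essentially the same as the paper's: both reduce associativity to the symmetric trinomial identity $\frac{\mu!}{\gamma!\delta!\epsilon!}$, with the same elementary observations handling commutativity, unitality, and gradedness. The only cosmetic difference is that you verify associativity on the indicator-function basis and invoke bilinearity, whereas the paper expands $f \star (g \star h)$ for arbitrary $f,g,h$ directly to the symmetric triple sum $\sum_{\gamma+\delta+\epsilon=\lambda}\frac{\lambda!}{\gamma!\delta!\epsilon!}f(\gamma)g(\delta)h(\epsilon)$.
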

\begin{proof}
    The commutativity and bilinearity of $\star$ are clear.
    That $\star$ is graded follows from the fact that the size of partitions is additive.
    The $\star$-unit is the indicator function of the empty partition $0 \in \Parts(G,0)$.
    To see that $\star$ is associative, note that
    \begin{align*}
        (f \star (g \star h))(\lambda) 
        &= \sum_{\gamma + \kappa = \lambda} \frac{\lambda!}{\gamma!\kappa!} f(\gamma)(g \star h)(\kappa)
        \\&= \sum_{\gamma + \kappa = \lambda} \frac{\lambda!}{\gamma!\kappa!} f(\gamma) \sum_{\delta+\epsilon = \kappa} \frac{\kappa!}{\delta!\epsilon!} g(\delta)h(\epsilon)
        \\&= \sum_{\gamma + \delta + \epsilon = \lambda} \frac{\lambda!}{\gamma!\delta!\epsilon!} f(\gamma) g(\delta)h(\epsilon)
    \end{align*}
    and that $((f \star g) \star h)$ expands to this same expression.
\end{proof}
Note that the graded pieces $\Parks(G,n,R)$ are commutative rings, but this structure is not induced by the $\star$-product.

Next we produce a $\Parks(G,R)$-valued family of power operations 
\[
(\P_n \colon \Marks(G) \to \Parks(G,n))_{n \geq 0}. 
\]

\begin{proposition} \label{prop:powopformula}
The maps $\P_n\colon \Marks(G) \to \Parks(G,n)$ given by 
\[ \P_n(f)(\lambda) = \prod_{[H],m} f([H])^{\lambda_{[H],m}} \]
are a family of power operations.
\end{proposition}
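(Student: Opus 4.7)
The plan is to verify each axiom of a family of power operations directly from the explicit product formula defining $\P_n(f)(\lambda)$. The degenerate axioms are immediate: when $n=0$, the only partition of size $0$ is the empty partition $\lambda=0$, and the empty product gives $\P_0(f)(0)=1$, which matches the multiplicative unit $\1_0$ of $\Parks(G)$; and when $n>0$, every $\lambda \in \Parts(G,n)$ has some part $([H],m)$ with $\lambda_{[H],m} > 0$, so plugging $f=0$ into the product gives a zero factor.

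The main content is the additivity relation
\[ \P_n(f+g)(\mu) = \sum_{i+j=n} (\P_i(f) \star \P_j(g))(\mu) \]
for $\mu \in \Parts(G,n)$. First I would unfold the right-hand side using the formula for $\star$ from \ref{prop:parksprod}: since $\P_i(f)(\kappa)$ and $\P_j(g)(\lambda)$ vanish unless $\|\kappa\|=i$ and $\|\lambda\|=j$, the double sum collapses to a single sum over splittings $\kappa+\lambda = \mu$, yielding
\[ \sum_{\kappa+\lambda=\mu} \frac{\mu!}{\kappa!\,\lambda!}\,\prod_{[H],m} f([H])^{\kappa_{[H],m}}\,\prod_{[H],m} g([H])^{\lambda_{[H],m}}. \]

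Next I would expand the left-hand side: by the definition of $\P_n$, it equals $\prod_{[H],m}(f([H])+g([H]))^{\mu_{[H],m}}$, and applying the binomial theorem to each factor gives
\[ \prod_{[H],m}\sum_{k=0}^{\mu_{[H],m}}\binom{\mu_{[H],m}}{k} f([H])^{k} g([H])^{\mu_{[H],m}-k}. \]
Distributing the product over the sums is indexed by choices of $k_{[H],m} \in \{0, \dots, \mu_{[H],m}\}$ for each pair $([H],m)$, which is exactly the same data as a splitting $\kappa + \lambda = \mu$ via $\kappa_{[H],m}=k_{[H],m}$ and $\lambda_{[H],m}=\mu_{[H],m}-k_{[H],m}$. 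The coefficient $\prod_{[H],m}\binom{\mu_{[H],m}}{\kappa_{[H],m}}$ equals $\mu!/(\kappa!\,\lambda!)$ by the definition of the partition factorial, matching the right-hand side term-by-term.

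There is no real obstacle here — the argument is essentially a bookkeeping exercise in which the multinomial coefficient $\mu!/(\kappa!\,\lambda!)$ in the transfer product formula is exactly what the binomial theorem produces when expanding the product defining $\P_n(f+g)$. The only mild subtlety is keeping the indexing straight: recognizing that splittings $\kappa+\lambda=\mu$ of decorated partitions correspond bijectively to choosing an integer $k_{[H],m} \in \{0,\dots,\mu_{[H],m}\}$ for each $([H],m)$, and that this choice does not need to be constrained by $\|\kappa\|=i, \|\lambda\|=j$ separately from the sum over $i+j=n$ since $\|\kappa\|+\|\lambda\|=\|\mu\|=n$ automatically.
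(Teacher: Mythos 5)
Your proof is correct and follows essentially the same route as the paper: expand each factor of $\P_n(f+g)(\mu)$ by the binomial theorem and identify the resulting sum with the $\star$-product formula via the bijection between splittings $\kappa+\lambda=\mu$ and componentwise choices $0 \le \kappa_{[H],m} \le \mu_{[H],m}$. The paper presents this as a single chain of equalities rather than unfolding both sides separately, but the underlying bookkeeping is identical.
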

\begin{proof}
It is clear that $\P_0=1$ and that $\P_n(0)=0$ for $n>0$.
Thus, it suffices to check that
\begin{align*}
    \P_n(f+g)(\mu) 
    &= \prod_{[H],m} ((f+g)([H]))^{\mu_{[H],m}}
    \\&= \prod_{[H],m} (f([H])+g([H]))^{\mu_{[H],m}}
    \\&= \prod_{[H],m} \left( \sum_{\kappa_{[H],m} + \lambda_{[H],m} = \mu_{[H],m}} \frac{\mu_{[H],m}!}{\kappa_{[H],m}!\lambda_{[H],m}!} f([H])^{\kappa_{[H],m}}g([H])^{\lambda_{[H],m}} \right)
    \\&= \sum_{\kappa+\lambda=\mu} \frac{\mu!}{\kappa!\lambda!} \P_{\|\kappa\|}(f)(\kappa) \P_{\|\lambda\|}(g)(\lambda)
    \\&= \sum_{i+j=n} \sum_{\substack{\kappa+\lambda = \mu \\ \|\kappa\| = i,\, \|\lambda\|=j}} \frac{\mu!}{\kappa!\lambda!} \P_i(f)(\kappa) \P_j(g)(\lambda)
    \\&= \sum_{i+j=n} \P_i(f)\star \P_j(g). \qedhere
\end{align*}
\end{proof}

\begin{corollary} \label{cor:trivial powopformula}
    The maps $\P_n \colon A(G) \to \Parks(G,n)$ given by
    \[ \P_n(X)(\lambda) = \prod_{[H],m} \chi(X)([H])^{\lambda_{[H],m}} \]
    are a family of power operations.
\end{corollary}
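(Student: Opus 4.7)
The plan is to observe that the maps $\P_n \colon A(G) \to \Parks(G,n)$ defined in the corollary are precisely the composition of the character map $\chi \colon A(G) \to \Marks(G)$ with the power operations $\P_n^{\Marks} \colon \Marks(G) \to \Parks(G,n)$ constructed in \ref{prop:powopformula}. Indeed, unwinding the definitions,
\[ \P_n(X)(\lambda) = \prod_{[H],m} \chi(X)([H])^{\lambda_{[H],m}} = \P_n^{\Marks}(\chi(X))(\lambda). \]
Once this identification is made, the corollary reduces to the general fact that precomposing a family of power operations with a group homomorphism produces a family of power operations.

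First I would note that $\chi$ is additive (it is even a ring map, as recalled in \ref{subsec:marks}), so $\chi(0) = 0$ and $\chi(X+Y) = \chi(X) + \chi(Y)$. Combined with \ref{prop:powopformula}, each of the three required axioms follows mechanically: the unit axiom $\P_0(X) = \P_0^{\Marks}(\chi(X)) = 1$ holds since $\P_0^{\Marks}$ is the constant $1$; the vanishing axiom $\P_n(0) = \P_n^{\Marks}(0) = 0$ for $n>0$ follows from $\chi(0) = 0$; and the additivity axiom follows from the chain
\[ \P_n(X+Y) = \P_n^{\Marks}(\chi(X) + \chi(Y)) = \sum_{i+j=n} \P_i^{\Marks}(\chi(X)) \star \P_j^{\Marks}(\chi(Y)) = \sum_{i+j=n} \P_i(X) \star \P_j(Y). \]

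There is no real obstacle here; this corollary is truly a corollary in that all the work has already been done in \ref{prop:powopformula}. Alternatively, one could phrase the argument through \ref{prop:powcorep} and \ref{prop:powrep}: a family of power operations out of $A(G)$ valued in $\Parks(G)$ is the same data as a group homomorphism $A(G) \to \widehat{\Parks(G)}^s$, and such a homomorphism is obtained by composing $\chi$ with the strict-unit-valued homomorphism associated to the family of \ref{prop:powopformula}.
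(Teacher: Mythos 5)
Your argument is exactly the paper's: the paper's one-line proof observes that the formula is that of \ref{prop:powopformula} precomposed with the marks homomorphism, and you simply spell out why precomposition with an additive map preserves the power-operation axioms (and note the alternative phrasing via \ref{prop:powrep}). This is correct and essentially identical to the intended proof.
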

\begin{proof}
    This follows from the fact that this formula is the formula of \ref{prop:powopformula} precomposed with the marks homomorphism.
\end{proof}

We can use this family of power operations along with \ref{prop:powcorep} to produce a map $\AA(G) \to \Parks(G)$.
\begin{definition} \label{def:aachar}
    Let
    \[ \chi = \chi_G \colon \AA(G) \to \Parks(G) \]
    be the map obtained by composing the isomorphism $\AA(G) \cong \Pow(A(G))$ of \ref{cor:powang} and the map $\Pow(A(G)) \to \Parks(G)$ induced by \ref{cor:trivial powopformula} and \ref{prop:powcorep}.
    We call this the character map or parks homomorphism in analogy to the terminology for the map $\chi \colon A(G) \to \Marks(G)$.
\end{definition}
Alternatively, $\chi \colon \AA(G) \to \Parks(G)$ is the composite of the isomorphism $\AA(G) \cong \Pow(A(G))$, $\Pow(\chi)\colon \Pow(A(G)) \to \Pow(\Marks(G))$, and the map $\Pow(\Marks(G)) \to \Parks(G)$ induced by \ref{prop:powopformula} and \ref{prop:powcorep}.

On the polynomial generators of \ref{cor:AAisPoly}, the parks homomorphism is given by
\[ \chi \colon \P_n([G/H]) \in \AA(G,n) \mapsto \left( \lambda \mapsto \prod_{[K],m} \chi([G/H])([K])^{\lambda_{[K],m}} \right) \in \Parks(G,n) \]
for a subgroup $H \subseteq G$ and $n \geq 0$.

Although the parks homomorphism and the marks homomorphism $\AA(G) \to A(G \wr \Sigma) \to \Marks(G \wr \Sigma)$ are written with the same symbol, it is usually easy to distinguish them in context; the marks homomorphism is evaluated at conjugacy classes of subgroups while the parks homomorphism is evaluated at decorated partitions.

The parks homomorphism $\chi \colon \AA(G) \to \Parks(G)$ will be studied more thoroughly in \ref{sec:pullback}.
In particular, we will show it is closely related to the marks homomorphism $\chi \colon A(G \wr \Sigma) \to \Marks(G \wr \Sigma)$.

\section{The retract $\beta$} \label{sec:beta}

Recall the augmented monoid homomorphism $\alpha \colon \Parts(G) \to \Conj(G \wr \Sigma)$ as described in \ref{sec:alpha}.
In this section, we produce an augmented monoid retract
\[ \beta \ \colon \Conj(G \wr \Sigma) \to \Parts(G) \]
of $\alpha$.

Consider a subgroup $H \subseteq G \wr \Sigma_n$.
Choose a set $[n]$ of size $n$ and identify $\Sigma_n$ with the group $\Sigma_{[n]}=\Aut([n])$ of bijections of $[n]$ as in \ref{subsec:fin sets}.
For $i,j \in [n]$, define
\[ H_{ij} = \{ \bar g_j \mid (\bar{g},\sigma) \in H,\, \sigma\colon i \mapsto j \} \subseteq G. \]
Notice that these sets are subsets of $G$ that satisfy compositional relations similar to that of a groupoid:
    \[ e \in H_{ii} \text{, } H_{jk}H_{ij} \subseteq H_{ik} \text{, and } H_{ij}^{-1} = H_{ji}. \]
To see the middle property, choose $(\bar{g},\sigma),(\bar{h},\tau) \in H$ such that $\sigma \colon i \mapsto j$ and $\tau \colon j \mapsto k$.
Then, the multiplication
\[ (\bar{h},\tau)(\bar{g},\sigma) = (\bar{h}\tau \bar{g},\tau\sigma) \]
gives us
\[ (\bar{h}\tau \bar{g})_k = \bar h_k \bar g_{\tau^{-1}(k)} = \bar h_k \bar g_j. \]
so that $H_{jk}H_{ij} \subseteq H_{ik}$.
The proofs of the other two properties are similar.
It follows that $H_{ii}$ is a subgroup of $G$.

Notice that $H$ has a left action on $[n]$ via the quotient $G \wr \Sigma_n \twoheadrightarrow \Sigma_n$. 
Accordingly, we write $H \under [n]$ (read $H$ under $[n]$) for the collection of orbits of $[n]$ under the left action of $H$. 
Elements of $H \under [n]$ are of the form $Hi$ for $i \in [n]$.
The set $H_{ij}$ is nonempty if and only if $Hi=Hj$.

\begin{definition} \label{def:beta}
    For a subgroup $H \subseteq G \wr \Sigma_n$, choose $S \subseteq [n]$ containing a unique representative of each orbit in $H \under [n]$ (equivalently choose a section of the quotient map $[n] \to H \under [n]$).
    Then, define
    \[ \beta([H]) = \sum_{i \in S} ([H_{ii}], |Hi|). \]
    Once we show that this sum is independent of the choice of $S$, we may write
    \[ \beta([H]) = \sum_{Hi \in H\under [n]} ([H_{ii}], |Hi|) \in \Parts(G) \]
    without reference to the choice $S$.
\end{definition}
\begin{proposition} \label{prop:beta}
    The formula above is a well-defined homomorphism
    \[ \beta \colon \Conj(G \wr \Sigma) \to \Parts(G) \]
    of augmented monoids, and $\beta$ is a retract of $\alpha$.
\end{proposition}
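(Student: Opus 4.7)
The plan is to verify, in order: independence of the chosen section $S$, independence of the representative of the conjugacy class $[H]$, compatibility with augmentation and the monoid structure, and finally the retract property $\beta \after \alpha = \id$.

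For independence of $S$, the key observation is that if $i, i' \in [n]$ lie in the same $H$-orbit, then $H_{i'i}$ is nonempty. Picking any $k \in H_{i'i}$, the compositional relations $H_{jk}H_{ij} \subseteq H_{ik}$ and $H_{ij}^{-1} = H_{ji}$ immediately force $k H_{ii} k^{-1} \subseteq H_{i'i'}$ and the reverse containment, so $[H_{ii}] = [H_{i'i'}]$ in $\Conj(G)$, and clearly $|Hi| = |Hi'|$. For independence of the representative of $[H]$, I would carry out a short direct computation, using the multiplication in $G \wr \Sigma_n$ recorded in \ref{subsec:wr prod}, to establish the explicit formula
\[ (g H g^{-1})_{\sigma(i), \sigma(j)} = \bar g_{\sigma(j)} \cdot H_{ij} \cdot \bar g_{\sigma(i)}^{-1} \]
for $g = (\bar g, \sigma) \in G \wr \Sigma_n$. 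This formula shows that $\sigma$ induces a bijection of orbits $H \under [n] \to (g H g^{-1}) \under [n]$ preserving orbit sizes and the $G$-conjugacy class of diagonal stabilizers, so the formula defining $\beta$ depends only on $[H]$.

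For the augmented monoid structure, first $\|\beta([H])\| = \sum_{Hi \in H \under [n]} |Hi| = n$, matching the augmentation on $\Conj(G \wr \Sigma)$. For the monoid homomorphism property, it suffices to check $\beta([H] \star [K]) = \beta([H]) + \beta([K])$ for $H \subseteq G \wr \Sigma_i$ and $K \subseteq G \wr \Sigma_j$. Since $[H] \star [K] = [H \times K]$ with $H \times K \subseteq G \wr \Sigma_i \times G \wr \Sigma_j \subseteq G \wr \Sigma_{i+j}$ via the identification $[i+j] \cong [i] \amalg [j]$ from \ref{subsec:fin sets}, the $(H \times K)$-orbits on $[i+j]$ decompose as the disjoint union of the $H$-orbits on $[i]$ and the $K$-orbits on $[j]$, and the diagonal stabilizers agree with those of $H$ or $K$ accordingly. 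Finally, by the homomorphism property, $\beta \after \alpha = \id$ reduces to checking generators: $\beta(\alpha(([H], m))) = \beta([H \wr \Sigma_m]) = ([H], m)$, since $H \wr \Sigma_m$ acts transitively on $[m]$ with $(H \wr \Sigma_m)_{ii} = H$ for every $i$.

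The main bookkeeping obstacle is the independence-of-representative step; the explicit conjugation formula above is what makes this (and the compatibility with $\star$) fall out cleanly.
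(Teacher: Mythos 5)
Your argument follows essentially the same route as the paper: show independence of the section $S$, then independence of the representative of $[H]$ via a conjugation formula for the sets $H_{ij}$, then verify the augmented-monoid properties, then the retract. Your conjugation formula $(gHg^{-1})_{\sigma(i),\sigma(j)} = \bar g_{\sigma(j)}\,H_{ij}\,\bar g_{\sigma(i)}^{-1}$ is a correct re-indexing of the paper's ${H^{(\bar g,\sigma)}}_{ij} = \bar g_{\sigma(j)}^{-1}H_{\sigma(i)\sigma(j)}\bar g_{\sigma(i)}$.

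One small index slip: in the $S$-independence step you pick $k \in H_{i'i}$ and claim $kH_{ii}k^{-1}\subseteq H_{i'i'}$, but with the paper's convention (where $H_{jk}H_{ij}\subseteq H_{ik}$ composes ``$i\to j\to k$'' and the left factor carries the $j\to k$ arrow) this choice gives $k^{-1}H_{ii}k\subseteq H_{i'i'}$. To get the conjugation as you wrote it, take $k\in H_{ii'}$ instead, matching the paper's choice of $h\in H_{ij}$ with $hH_{ii}h^{-1}=H_{jj}$. This is cosmetic and does not affect the argument.

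Where you genuinely improve on the paper is the retract step: once both $\alpha$ and $\beta$ are known to be monoid homomorphisms, checking $\beta\after\alpha=\id$ on the free generators $([H],m)\mapsto[H\wr\Sigma_m]\mapsto([H],m)$ suffices. The paper instead carries out a direct orbit-by-orbit computation on the full product subgroup $\prod_{[H],m}(H\wr\Sigma_m)^{\times\lambda_{[H],m}}$; your reduction to generators is shorter and makes the structural reason for the identity more transparent.
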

\begin{proof}
We begin by showing this definition is independent of the choice of $S$.
To see this, note that if $i$ and $j$ are in the same $H$-orbit, then we may choose $h \in H_{ij}$ so that $hH_{ii}h^{-1} = H_{jj}$.

The fact that this definition is independent of the chosen representative $H$ of the conjugacy class $[H]$ follows from the behavior of the sets $H_{ij}$ under conjugation by elements of $H \subseteq G \wr \Sigma_n$.
First, note the following formula for conjugation in the wreath product:
\[ (\bar h, \tau)^{(\bar g, \sigma)} = (\bar g, \sigma)^{-1}(\bar h, \tau)(\bar g, \sigma) = (\sigma^{-1} (\bar g\,^{-1} \bar h \tau \bar g), \sigma^{-1}\tau \sigma). \]
If we write 
\[ \bar h^{(\bar g, \sigma)} \text{ for } \sigma^{-1} (\bar g\,^{-1} \bar h \tau \bar g) \]
then the conjugation formula is much simpler:
\[ (\bar h, \tau)^{(\bar g, \sigma)} = (\bar h^{(\bar g, \sigma)}, \tau^\sigma). \]
This is justified by identifying $G^{\times n}$ with the normal subgroup $G^{\times n} \trianglelefteq  G \wr \Sigma_n$ and considering the conjugation action of $G \wr \Sigma_n$ on this normal subgroup.
Additionally, note that
\[ \bar h^{(\bar g, \sigma)}{}_i = {\bar g_{\sigma(i)}}^{-1} \bar h_{\sigma(i)} \bar g_{\tau^{-1}(\sigma(i))} \]
for $i \in [n]$.

With this in mind, consider $(\bar g, \sigma) \in G \wr \Sigma_n$ and compute that
\begin{align*}
    {H^{(\bar g,\sigma)}}_{ij} 
    &= \{ \bar h_j \mid (\bar h, \tau) \in H^{(\bar g,\sigma)}, \tau \colon i \mapsto j \}
    \\&= \{ \bar h_j \mid (\bar h, \tau)^{(\bar g,\sigma)^{-1}} \in H, \tau \colon i \mapsto j \}
    \\&= \{ \bar h^{(\bar g, \sigma)}{}_j \mid (\bar h, \tau) \in H, \tau^\sigma \colon i \mapsto j \}
    \\&= \{ {\bar g_{\sigma(j)}}^{-1} \bar h_{\sigma(j)} \bar g_{\tau^{-1}(\sigma(j))} \mid (\bar h, \tau) \in H, \tau \colon \sigma(i) \mapsto \sigma(j) \}
    \\&= {\bar g_{\sigma(j)}}^{-1} \{ \bar h_{\sigma(j)} \mid (\bar h, \tau) \in H, \tau \colon \sigma(i) \mapsto \sigma(j) \} \bar g_{\sigma(i)}
    \\&= {\bar g_{\sigma(j)}}^{-1} H_{\sigma(i)\sigma(j)} \bar g_{\sigma(i)}.
\end{align*}
Now, the fact that the definition of $\beta$ is independent of the chosen representative $H$ of $[H]$ follows from the fact that
\[ {H^{(\bar{g},\sigma)}}_{ii} = \bar g_{\sigma(i)}^{-1} H_{\sigma(i)\sigma(i)} \bar g_{\sigma(i)} \]
and that if $S$ is a set of orbit representatives for $H \under [n]$, then $\sigma^{-1}(S)$ is a set of orbit representatives for $H^{(\bar{g},\sigma)} \under [n]$.
Thus, we have shown that $\beta$ is well-defined.

The fact that $\beta$ preserves the augmentation i.e.\ that $\|\beta([H \subseteq G \wr \Sigma_n])\| = n$ follows from the fact that 
\[ \sum_{Hi \in H\under [n]} |Hi| = n. \]

To see that $\beta$ is a monoid homomorphism, consider $[H] \in \Conj(G \wr \Sigma_i)$ and $[K] \in \Conj(G \wr \Sigma_j)$ where $n=i+j$.
Recall that $[H] \star [K] = [H \times K] \in \Conj(G \wr \Sigma_n)$.
We may choose $[n] = [i] \amalg [j]$ so that $(H \times K) \under [n] \cong (H \under [i]) \amalg (K \under [j])$.
Additionally, we may recognize that
\[ (H \times K)_{k\ell} = \begin{cases} H_{k\ell}, & k,\ell \in [i] \\ K_{k\ell}, & k,\ell \in [j] \\ \emptyset, &\text{otherwise} \end{cases} \]
for $k,\ell \in [n]$.
Then,
\begin{align*}
    \beta([H] \star [K]) 
    &= \sum_{(H \times K)k \in (H \times K)\under [n]} ([(H \times K)_{kk}], |(H \times K)k|)
    \\&= \sum_{Hk \in H\under [i]} ([H_{kk}],|Hk|) + \sum_{K\ell \in K \under [j]} ([K_{\ell\ell}],|K\ell|)
    \\&= \beta([H]) + \beta([K]).
\end{align*}

To see that $\beta$ is a retract of $\alpha$, consider some $\lambda \in \Parts(G,n)$.
As described in \ref{sec:alpha}, $\alpha(\lambda)$ is the conjugacy class 
\[ \alpha(\lambda) = \bigstar_{[H],m} [H \wr \Sigma_m]^{\star \lambda_{[H],m}} = \left[ \prod_{[H],m} (H \wr \Sigma_m)^{\times \lambda_{[H],m}} \right]. \]
For the sake of this proof, let $[n]$ be the set
\[ \coprod_{[H],m} [\lambda_{[H],m}] \times [m] = \{ ([H],m,i,j) \mid [H] \in \Conj(G), m \in \N_+, i \in [\lambda_{[H],m}], j \in [m] \}. \]
This determines the embedding $\prod_{[H],m} (G \wr \Sigma_m)^{\times \lambda_{[H],m}} \subseteq G \wr \Sigma_n$ as in \ref{subsec:fin sets}.
Let $P$ be the product subgroup
\[ P = \prod_{[H],m} (H \wr \Sigma_m)^{\times \lambda_{[H],m}} \subseteq \prod_{[H],m} (G \wr \Sigma_m)^{\times \lambda_{[H],m}} \subseteq G \wr \Sigma_n \]
so that $\alpha(\lambda) = [P]$.
Then, the $P$-orbits of $[n]$ are the sets $\{([H],m,i)\} \times [m] \subseteq [n]$ for $[H] \in \Conj(G)$, $m \in \N_+$, and $i \in [\lambda_{[H],m}]$
For $k=([H],m,i,j)$ a representative of such an orbit, the associated subgroup $P_{kk}$ is $H$.
Thus, it follows that
\begin{align*}
    \beta(\alpha(\lambda))
    &= \beta([P])
    = \sum_{\substack{P([H],m,i,j) \in P \under [n]}} ([P_{kk}], |Pk|) 
    \\&= \sum_{\substack{[H],m,i \\ i \in [\lambda_{[H],m}]}} ([H], m) 
    = \sum_{\substack{[H],m}} \lambda_{[H],m} \cdot ([H], m) 
    = \lambda,
\end{align*}
where $k$ in the first sum denotes $([H],m,i,j)$.
This completes the proof.
\end{proof}

\begin{proposition} \label{prop:alphabetagroup}
Given $H \subseteq G \wr \Sigma_n$, there is a unique representative $H' \in \alpha \beta([H])$ such that $H \subseteq H'$.
\end{proposition}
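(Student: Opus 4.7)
The plan is to exhibit $H'$ explicitly and then verify its conjugacy class and its uniqueness. Define
\[
H' := \{(\bar g, \sigma) \in G \wr \Sigma_n \mid \sigma \text{ preserves the $H$-orbit partition of } [n] \text{ and } \bar g_j \in H_{\sigma^{-1}(j), j} \text{ for all } j \in [n]\}.
\]
This is a canonical subgroup depending only on $H$, with no auxiliary choices.

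First I would check $H'$ is a subgroup containing $H$. Closure under multiplication and inversion is a routine application of the compositional relations $H_{jk}H_{ij} \subseteq H_{ik}$ and $H_{ij}^{-1} = H_{ji}$ from the construction of $\beta$, and $H \subseteq H'$ is tautological from the definition of the $H_{ij}$. To pin down the conjugacy class of $H'$, observe that the condition on $\sigma$ forces $H'$ to decompose as $\prod_O (H' \cap G \wr \Sigma_O)$ over the $H$-orbits $O$ of $[n]$. For each orbit, fix a representative $i_O \in O$ and coset representatives $h_j \in H_{i_O, j}$ with $h_{i_O} = e$; transitivity of $H$ on $O$ then yields the identity $H_{k,j} = h_j H_{i_O, i_O} h_k^{-1}$ for all $j, k \in O$. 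A short computation shows that conjugation by $(\bar h, e) \in G \wr \Sigma_O$, where $\bar h_j = h_j$, carries $H_{i_O, i_O} \wr \Sigma_O$ onto $H' \cap G \wr \Sigma_O$. Assembling the orbits identifies $H'$ as a conjugate of $\prod_O H_{i_O, i_O} \wr \Sigma_O$, a representative of $\alpha\beta([H])$.

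For uniqueness, let $H''$ be another representative of $\alpha\beta([H])$ containing $H$. Matching orbit-size multisets and orbit counts, together with $H \subseteq H''$, forces $H$ and $H''$ to share the same orbit partition of $[n]$, so $H'' = \prod_O (H'' \cap G \wr \Sigma_O)$ and uniqueness reduces to a per-orbit claim. Writing $H'' \cap G \wr \Sigma_O = (\bar h', e)(H_{i_O, i_O} \wr \Sigma_O)(\bar h', e)^{-1}$ for some $\bar h' \in G^O$, the containment of the projection of $H$ into $G \wr \Sigma_O$ forces the equality of cosets $h_j H_{i_O, i_O} h_k^{-1} = \bar h'_j H_{i_O, i_O} (\bar h'_k)^{-1}$ for all $j, k \in O$. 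Specializing $j = k = i_O$ gives $\bar h'_{i_O} \in N_G(H_{i_O, i_O})$, and the coset identity then yields $\bar h'_j \in h_j N_G(H_{i_O, i_O})$ for every $j$. Since $N_G(H_{i_O, i_O}) \wr \Sigma_O$ normalizes $H_{i_O, i_O} \wr \Sigma_O$ inside $G \wr \Sigma_O$, modifying $(\bar h, e)$ by $(\bar h, e)^{-1}(\bar h', e) \in N_G(H_{i_O, i_O}) \wr \Sigma_O$ does not alter the conjugate, so $H'' \cap G \wr \Sigma_O = H' \cap G \wr \Sigma_O$, and hence $H'' = H'$.

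The main obstacle is the per-orbit uniqueness step. The other pieces—subgroup axioms from the compositional relations, decomposition along orbits, and the explicit conjugation that identifies the orbit-local structure—are essentially bookkeeping. The uniqueness, however, requires weaving together the coset identity $H_{k,j} = h_j H_{i_O, i_O} h_k^{-1}$, the normalizer condition on $\bar h'_{i_O}$, and the absorption of $N_G(H_{i_O, i_O}) \wr \Sigma_O$ into the wreath product, in the right order and without confusing the two layers of cosets at play.
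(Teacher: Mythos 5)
Your construction of $H'$ and the existence verification match the paper's: same subgroup $\left\{(\bar g,\sigma) \mid \bar g_{\sigma(i)} \in H_{i\sigma(i)}\ \forall i\right\}$, same groupoid-style checks, and your orbit-by-orbit conjugation to $H_{i_O i_O}\wr\Sigma_O$ is just a factored-apart version of the paper's single conjugation by a choice of $\bar g$ with $\bar g_i \in H_{sq(i)\,i}$. That part is fine.

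The uniqueness step has a real gap. You write $H''\cap G\wr\Sigma_O$ in the form $(\bar h',e)(H_{i_O i_O}\wr\Sigma_O)(\bar h',e)^{-1}$, but nothing you've established rules out that the conjugator taking $\prod_O H_{i_O i_O}\wr\Sigma_O$ to $H''$ permutes distinct orbits of equal size. In that case $H''\cap G\wr\Sigma_O$ would instead be a conjugate of $H_{i_{O'}i_{O'}}\wr\Sigma_O$ for some other orbit $O'$ with $|O'|=|O|$ but possibly $[H_{i_{O'}i_{O'}}]\neq[H_{i_O i_O}]$. Matching orbit-size multisets alone does not pin down the decoration orbit-by-orbit; fixing this requires the global cardinality identity $|H'|=|H''|$ combined with the inclusions $H_{i_O i_O}\subseteq H''_{i_O i_O}$ forced by $H\subseteq H''$, to conclude $H''_{i_O i_O}=H_{i_O i_O}$ for \emph{each} $O$ before you can even write down the conjugation form you use. (After that, your coset-equality step is fine, since an inclusion of cosets of the same cardinality is an equality.) The paper's uniqueness argument sidesteps all of this: since $H''$ is conjugate to $H'$ it automatically has the ``saturated'' form $\left\{(\bar g,\sigma)\mid \bar g_{\sigma(i)}\in H''_{i\sigma(i)}\right\}$, so $H\subseteq H''$ gives $H_{ij}\subseteq H''_{ij}$, hence $H'\subseteq H''$ directly from the defining form, and $|H'|=|H''|$ forces equality — no per-orbit bookkeeping at all. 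You would do well to adopt that route or to insert the missing cardinality argument explicitly.
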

Note that we only rely on the existence of this representative in the remainder of the document.
\begin{proof}
    First we show existence. 
    Let $H'$ denote the subgroup 
    \[ H' = \{ (\bar g,\sigma) \in G \wr \Sigma_n \mid \bar g_{\sigma(i)} \in H_{i \sigma(i)} \ \forall i \in [n] \}  \subseteq G \wr \Sigma_n. \]
    The fact that this is a subgroup follows from the groupoid-like properties satisfied by the $H_{ij}$'s, and the fact that $H \subseteq H'$ follows directly from the definition of the $H_{ij}$'s.
    It also follows that $H'_{ij} = H_{ij}$ for any $i,j \in [n]$ and that the orbit $H'i$ equals the orbit $Hi$ for $i \in [n]$.
    Accordingly $H' \under [n] \cong H \under [n]$.

    Choose a section $s\colon H\under [n] \to [n]$ of the quotient $q \colon [n] \to H \under [n]$ and choose any $\bar g \in G^{\times n} \subseteq G \wr \Sigma_n$ so that $\bar g_i \in H_{sq(i) i}$.
    Then, observe that
    \[ H^{\bar g}{}_{ij} = \bar g_j^{-1} H_{ij} \bar g_i = H_{sq(i)sq(j)} = \begin{cases}
        H_{sq(i) sq(i)}, &\text{if } Hi=Hj \\
        \emptyset, &\text{otherwise.}
    \end{cases} \]
    Let $S = sq([n])$ be the collection of representatives of the orbits $H \under [n]$ determined by the section $s$.
    It follows that
    \[ H'^{\bar g} = \prod_{i \in S} H_{ii} \wr \Sigma_{Hi} \subseteq G \wr \Sigma_n, \]
    where $\Sigma_{Hi}$ denotes the symmetric group on the orbit $Hi \subseteq [n]$ and using the embedding $\prod_{i \in S} G \wr \Sigma_{Hi} \subseteq G \wr \Sigma_n$ arising from the bijection $\coprod_{i \in S} Hi \cong [n]$.
    This establishes that $[H'] = \alpha\beta([H]) = \alpha\beta([H'])$.

    To see that $H'$ is unique, recognize that any representative $H'' \in \alpha\beta([H])$ necessarily has the form
    \[ H'' = \{ (\bar g,\sigma) \in G \wr \Sigma_n \mid \bar g_{\sigma(i)} \in H''_{i \sigma(i)} \ \forall i \in [n] \} \]
    because $H'$ has this form and $H''$ is conjugate to $H'$.
    Thus, in order for $H''$ to contain $H$, it must be that $H_{ij} \subseteq H''_{ij}$ for all $i,j \in [n]$ and thus that $H_{ij} = H''_{ij}$ for all $i,j$, so that $H' = H''$.
\end{proof}

For completeness, we describe a few more ways to think about the map $\beta$. Using the notation established above, for $i \in S$ and $H \subseteq G \wr \Sigma_n$, we have 
\[
\Stab_H(i) \subseteq \Stab_{G \wr \Sigma_n}(i) \cong G \wr \Sigma_{\{i\}} \times G \wr \Sigma_{[n] - \{i\}} \cong G \times G \wr \Sigma_{n-1},
\]
where $[n]-\{i\}$ denotes the set difference.
Then $H_i = \pi_{\{i\}}(\Stab_H(i)) \subseteq G$ is another description of $H_{ii}$, where $\pi_{\{i\}}$ denotes projection onto $G \wr \Sigma_{\{i\}} \cong G$.

The data of a $\Conj(G)$-decorated partition of size $n$ is equivalent to an equivalence class of pairs $(X, f \colon [n] \twoheadrightarrow X/G)$, where $X$ is a finite right $G$-set, $X/G$ is the set of right orbits whose elements are of the form $xG$ for $x \in X$, and $f$ is a surjection (of sets).
The associated $\Conj(G)$-decorated partition of size $n$ is given by
\[ (X,f) \rightsquigarrow \sum_{xG \in X/G} ([\Stab_G(x)], |f^{-1}(xG)|). \]
Equivalently,
\[ \lambda_{[H],m} = \left|\left\{ xG \in X/G \colon |f^{-1}(xG)| = m \text{ and } [\Stab_G(x)] = [H]\right\}\right|. \]
To construct the $(X,f)$ pair corresponding to $\beta([H])$ for $H \subseteq G \wr \Sigma_n$, recall that $G \wr \Sigma_n \cong \Aut_G([n] \times G)$ where $[n] \times G$ is regarded as a right $G$-set by multiplication in the second factor.
Then, $[n] \times G$ is a $(H,G)$-biset, and the $(X,f)$ pair corresponding to $\beta([H])$ is 
\[ \beta([H]) \leftsquigarrow \bigg( H \under ([n] \times G),\ [n] \cong ([n] \times G)/G \twoheadrightarrow H \under ([n] \times G) / G \bigg). \]

Finally, note that the augmented monoid homomorphism $\beta$ induces a map of graded monoid rings $\Z[\Conj(G \wr \Sigma)] \to \Z[\Parks(G)]$.
Under the identifications $\Z[\Conj(G \wr \Sigma)] \cong A(G \wr \Sigma)$ and $\Z[\Parks(G)] \cong \AA(G)$, this corresponds to a graded ring map $r\colon A(G \wr \Sigma) \to \AA(G)$.
\begin{definition} \label{def:r}
    Let $r \colon A(G \wr \Sigma) \to \AA(G)$ be the commutative graded ring homomorphism defined on additive generators by
    \[ r \colon A(G \wr \Sigma) \to \AA(G) \colon [(G \wr \Sigma_n) / H] \mapsto [(G \wr \Sigma_n) / \alpha\beta([H])] = [(G \wr \Sigma_n)/H'], \]
    where $H' \in \alpha\beta([H])$.
    Equivalently, 
    \[ r \colon [(G \wr \Sigma_n) / H] \mapsto \bigstar_{[K],m} \P_m([G/K])^{\star \beta([H])_{[K],m}} = \bigstar_{Hi \in H\under [n]} \P_{|Hi|}([G/H_{ii}]). \]
\end{definition}
\begin{corollary}
The map $r \colon A(G \wr \Sigma) \to \AA(G)$ is a retract to the inclusion $\AA(G) \hookrightarrow A(G \wr \Sigma)$.
\end{corollary}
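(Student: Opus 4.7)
The plan is to unpack what all three maps in question correspond to under the monoid-ring identifications already established in \ref{sec:alpha} and \ref{sec:beta}, and then apply \ref{prop:beta} directly.

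First I would recall that by \ref{prop:AAMonRing}, the inclusion $\AA(G) \hookrightarrow A(G \wr \Sigma)$ agrees, after identifying $A(G \wr \Sigma) \cong \Z[\Conj(G \wr \Sigma)]$ and $\AA(G) \cong \Z[\Parts(G)]$, with the graded ring homomorphism $\Z[\alpha] \colon \Z[\Parts(G)] \to \Z[\Conj(G \wr \Sigma)]$ induced by the augmented monoid homomorphism $\alpha$ of \ref{def:alpha}. Next, by \ref{def:r}, the map $r$ is precisely the graded ring homomorphism $\Z[\beta] \colon \Z[\Conj(G \wr \Sigma)] \to \Z[\Parts(G)]$ induced by $\beta$.

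The composition $r \circ \iota$ is then the graded ring homomorphism induced by the composite augmented monoid map $\beta \circ \alpha \colon \Parts(G) \to \Parts(G)$. But \ref{prop:beta} establishes exactly that $\beta$ is a retract of $\alpha$, i.e.\ $\beta \circ \alpha = \id_{\Parts(G)}$. Applying the monoid-ring functor to this identity yields $r \circ \iota = \id_{\AA(G)}$.

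There is no real obstacle here since all the work has been done in \ref{prop:beta}; the proof is essentially an observation that the previously established monoid-level retract passes to graded monoid rings under the identifications of \ref{subsec:monoid rings}. One could also argue concretely on the canonical basis: a generator $[(G \wr \Sigma_n)/H'] \in \AA(G,n)$ with $H' \in \alpha(\lambda)$ is sent by $r$ to $[(G \wr \Sigma_n)/\alpha\beta([H'])] = [(G \wr \Sigma_n)/\alpha(\lambda)] = [(G \wr \Sigma_n)/H']$, again using $\beta \circ \alpha = \id$.
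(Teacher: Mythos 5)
Your proof is correct and matches the paper's intent exactly: the corollary is stated without a displayed proof precisely because, once one observes (as the surrounding text does) that under the monoid-ring identifications the inclusion corresponds to $\Z[\alpha]$ and $r$ corresponds to $\Z[\beta]$, the retraction is immediate from $\beta \circ \alpha = \id$ in \ref{prop:beta} after applying the monoid-ring functor. Your basis-element verification at the end is a nice concrete sanity check consistent with the explicit formula in \ref{def:r}.
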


\section{Fixed points of submissive sets} \label{sec:fixedpoints}

In this section, we study the fixed point sets of $G \wr \Sigma_n$-sets of the form $X^{\times n}$ for a finite $G$-set $X$, and more generally of submissive $G \wr \Sigma_n$-sets, for any subgroup $H \subseteq G \wr \Sigma_n$.
These fixed point sets are closed related to the map $\beta$ of the previous section, and we continue to use the notation of that section.

In particular, any subgroup $H \subseteq G \wr \Sigma_n$ has a left action on $[n]$ via the quotient $G \wr \Sigma_n \twoheadrightarrow \Sigma_n$. 
Accordingly, we write $H \under [n]$ (read $H$ under $[n]$) for the collection of orbits of $[n]$ under the left action of $H$. 
Elements of $H \under [n]$ are of the form $Hi$ for $i \in [n]$.
Further, we make use of the previously defined sets $H_{ij}$ for $i,j \in [n]$.

As usual, $[n]$ is the disjoint union of its $H$-orbits.
Explicitly, 
\[ [n] \cong \coprod_{Hi \in H \under [n]} Hi. \]
We write $X^{\times Hi} = \prod_{j \in Hi} X \cong X^{\times |Hi|}$ and $\Sigma_{Hi}$ for the group of bijections of the set $Hi \subseteq [n]$.
With this notation, there is a canonical isomorphism
\[ \prod_{Hi \in H \under [n]} X^{\times Hi} \cong X^{\times [n]} = X^{\times n} \text{ and embedding } \prod_{Hi \in H \under[n]} \Sigma_{Hi} \subseteq \Sigma_{[n]} = \Sigma_n. \]
Then, it follows that
\[ H \subseteq \prod_{Hi \in H \under[n]} G \wr \Sigma_{Hi} \subseteq G \wr \Sigma_n. \]
Further, let 
\[ \pi_{Hj} \colon \prod_{Hi \in H \under[n]} G \wr \Sigma_{Hi} \to G \wr \Sigma_{Hj} \]
denote projection onto the $Hj$ factor for $Hj \in H \under [n]$.

There are three main propositions that we use to characterize the fixed point sets of submissive sets.
The other interesting facts are corollaries.

\begin{proposition} \label{prop:fixedpointsproduct}
    Let $X$ be a finite $G$-set and $H \subseteq G \wr \Sigma_n$. 
    Then, under the notation above, there is a bijection
    \[ (X^{\times n})^H \cong \prod_{Hi \in H\under [n]} (X^{\times Hi})^{\pi_{Hi} H}. \]
\end{proposition}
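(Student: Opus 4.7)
The plan is to reduce the statement to the elementary fact that fixed points commute with products under a diagonal action. The canonical bijection $X^{\times n} \cong \prod_{Hi \in H\under [n]} X^{\times Hi}$ comes from the orbit decomposition $[n] \cong \coprod_{Hi \in H\under[n]} Hi$, and I would begin by promoting it to an equivariant statement.

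First, I would observe that because the $H$-action on $[n]$ factors through the quotient $G \wr \Sigma_n \twoheadrightarrow \Sigma_n$ and preserves each orbit setwise, the permutation component $\sigma$ of any $(\bar g,\sigma) \in H$ restricts to a bijection of each orbit $Hi$. Consequently $H$ sits inside the product subgroup $\prod_{Hi \in H\under[n]} G\wr\Sigma_{Hi} \subseteq G\wr\Sigma_n$. Under the canonical bijection $X^{\times n} \cong \prod_{Hi} X^{\times Hi}$, the wreath product action formula $((\bar g,\sigma) x)_i = \bar g_i x_{\sigma^{-1}(i)}$ makes this identification equivariant for the factor-wise action of $\prod_{Hi} G\wr\Sigma_{Hi}$, where the factor indexed by $Hi$ acts only on the $X^{\times Hi}$ factor. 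Restricting along the embedding of $H$ above, the isomorphism becomes $H$-equivariant with diagonal action: $h \cdot (x_{Hi})_{Hi} = (\pi_{Hi}(h) \cdot x_{Hi})_{Hi}$.

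Given this diagonal description, a tuple $(x_{Hi})_{Hi}$ is $H$-fixed if and only if $\pi_{Hi}(h) \cdot x_{Hi} = x_{Hi}$ for every $h \in H$ and every orbit $Hi$, which says precisely that each $x_{Hi}$ is fixed by the image subgroup $\pi_{Hi}(H) \subseteq G\wr\Sigma_{Hi}$. This yields the claimed bijection $(X^{\times n})^H \cong \prod_{Hi} (X^{\times Hi})^{\pi_{Hi} H}$. No step here presents a real obstacle: the content of the proof is bookkeeping, and the only verification requiring care is the compatibility of the embedding $H \subseteq \prod_{Hi} G\wr\Sigma_{Hi}$ with the product decomposition of $X^{\times n}$, which follows directly from unwinding the wreath action formula.
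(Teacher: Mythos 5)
Your proposal is correct and takes essentially the same route as the paper. The paper cites the general principle that, for a subgroup $K \subseteq G \times H$ acting on $X \times Y$, one has $(X \times Y)^K \cong X^{\pi_1 K} \times Y^{\pi_2 K}$, and then applies it (iteratively) to the orbit decomposition $[n] \cong \coprod_{Hi} Hi$ with the embedding $H \subseteq \prod_{Hi} G \wr \Sigma_{Hi}$; your write-up is precisely this argument with the equivariance of the product decomposition and the reduction to a per-factor fixed-point condition spelled out explicitly.
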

\begin{proof}
    This follows immediately from the fact that if $K$ is a subgroup of a product group $G \times H$ and $X$ and $Y$ are $G$ and $H$-sets respectively, then
    \[ (X \times Y)^K \cong X^{\pi_1K} \times Y^{\pi_2K} \]
    where $\pi_1$ and $\pi_2$ denote the projections onto $G$ and $H$ respectively.
\end{proof}

\begin{proposition} \label{cor:alphabetafixedpoints}
    Let $X \hookrightarrow Y^{\times n}$ be a submissive $G \wr \Sigma_n$-set and $H \subseteq G \wr \Sigma_n$.
    Then,
    \[ X^H = X^{H'}, \]
    where $H'$ is the unique representative of the conjugacy class $\alpha\beta([H])$ such that $H \subseteq H'$ as in \ref{prop:alphabetagroup}.
\end{proposition}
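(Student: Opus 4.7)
The plan is to reduce to the case $X = Y^{\times n}$ using the embedding $X \hookrightarrow Y^{\times n}$ and then apply \ref{prop:fixedpointsproduct} to decompose the fixed point sets orbit by orbit. Since $H \subseteq H'$ by construction (as recorded in \ref{prop:alphabetagroup}), the inclusion $X^{H'} \subseteq X^H$ is immediate, so all of the work lies in the reverse inclusion. Identifying $X$ with its image in $Y^{\times n}$, we have $X^H = X \cap (Y^{\times n})^H$ and $X^{H'} = X \cap (Y^{\times n})^{H'}$; so it suffices to prove the equality
\[ (Y^{\times n})^H = (Y^{\times n})^{H'}. \]

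For this, first note that $H$ and $H'$ have the same orbits on $[n]$: this is recorded in the proof of \ref{prop:alphabetagroup}, where one sees that $H'_{ij} = H_{ij}$ for all $i,j$. Consequently, \ref{prop:fixedpointsproduct} presents both sides above as products over the common index set $H \under [n] = H' \under [n]$:
\[ (Y^{\times n})^H \cong \prod_{Hi \in H \under [n]} (Y^{\times Hi})^{\pi_{Hi} H} \quad \text{and} \quad (Y^{\times n})^{H'} \cong \prod_{Hi \in H \under [n]} (Y^{\times Hi})^{\pi_{Hi} H'}. \]
Thus the proof reduces to showing that $(Y^{\times Hi})^{\pi_{Hi} H} = (Y^{\times Hi})^{\pi_{Hi} H'}$ for each orbit $Hi$.

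To handle this orbit-wise equality, I would establish the following elementary lemma: if $K \subseteq G \wr \Sigma_m$ is a subgroup acting transitively on $[m]$, then for any $i \in [m]$, the projection $Y^{\times m} \to Y$ onto the $i$th coordinate restricts to a bijection $(Y^{\times m})^K \cong Y^{K_{ii}}$. The surjectivity uses the groupoid-like multiplication $K_{jk}K_{ij} \subseteq K_{ik}$ already recorded in \ref{sec:beta} to reconstruct a fixed tuple from a single $K_{ii}$-fixed coordinate, and well-definedness of the reconstruction uses that different lifts in $K_{ij}$ differ by $K_{ii}$. Applying this lemma to both $K = \pi_{Hi} H$ and $K = \pi_{Hi} H'$, and verifying the notational identities $(\pi_{Hi} H)_{ii} = H_{ii}$ and $(\pi_{Hi} H')_{ii} = H'_{ii} = H_{ii}$, we see that both fixed point sets project bijectively onto the same target $Y^{H_{ii}} \subseteq Y$ via the same map. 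Combined with the inclusion $(Y^{\times Hi})^{\pi_{Hi} H'} \subseteq (Y^{\times Hi})^{\pi_{Hi} H}$ coming from $H \subseteq H'$, this forces the equality.

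The main obstacle is packaging the transitive-subgroup lemma cleanly: the slightly delicate point is not the reconstruction itself but arguing that the two fixed point sets, though potentially arising from different subgroups $\pi_{Hi} H$ and $\pi_{Hi} H'$, produce the same subset of $Y^{\times Hi}$ rather than merely sets of the same size. Once the lemma is in place, the remainder is formal assembly: embedding reduces to $Y^{\times n}$, \ref{prop:fixedpointsproduct} reduces to a single orbit, and transitivity on each orbit reduces to the single invariant $H_{ii} = H'_{ii}$.
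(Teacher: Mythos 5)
Your proof is correct, but it takes a different route from the paper's and is somewhat more elaborate than necessary. The paper proves the claim by directly unwinding the fixed-point condition: since $X \subseteq Y^{\times n}$ consists of $n$-tuples, one gets
\[
X^H = \{ x \in X \mid g x_i = x_j \text{ for all } i,j \in [n] \text{ and } g \in H_{ij} \},
\]
which visibly depends only on the sets $H_{ij}$; the equality $H'_{ij} = H_{ij}$ recorded in \ref{prop:alphabetagroup} then finishes the proof in one stroke. Your approach reduces first to $Y^{\times n}$, decomposes via \ref{prop:fixedpointsproduct}, and then handles each orbit with a transitive-subgroup projection lemma (which is precisely \ref{prop:transitivefixedpoints} in the paper, stated after this proposition but with an independent proof, so there is no circularity). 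The delicate point you flagged and resolved correctly --- that both orbit-wise fixed-point sets project bijectively onto $Y^{H_{ii}}$ via the \emph{same} map, and combined with $B \subseteq A$ this forces $B = A$, not merely $|B| = |A|$ --- is exactly the right thing to settle; it is sound because if $\pi|_A$ and $\pi|_B$ are both bijections onto the same target and $B \subseteq A$, then for $a \in A$ the preimage under $\pi|_B$ of $\pi(a)$ lies in $B \subseteq A$ and equals $a$ by injectivity of $\pi|_A$. What your approach buys is a clean orbit-by-orbit factorization that makes the role of the subgroups $H_{ii}$ transparent; what the paper's approach buys is brevity and the ability to work directly with the general submissive $X$ without passing through the ambient $Y^{\times n}$. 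Both ultimately hinge on the same key fact, $H'_{ij} = H_{ij}$, just packaged differently.
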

\begin{proof}
    It suffice to prove the claim for a submissive $G \wr \Sigma_n$-set $X \subseteq Y^{\times n}$.
    The idea is that, because the elements of $X$ are $n$-tuples, fixed points can be checked `component-wise.'
    That is,
    \begin{align*}
        X^H 
        &= \{ x \in X \mid (\bar g,\sigma)x = x \ \forall (\bar g,\sigma) \in H \}
        \\&= \{ x \in X \mid \bar g_{\sigma(i)} x_i = x_{\sigma(i)} \ \forall (\bar g,\sigma) \in H \ \forall i \in [n] \}
        \\&= \{ x \in X \mid g x_i = x_j \ \forall i,j \in [n] \ \forall g \in H_{ij} \}.
    \end{align*}
    where the sets $H_{ij}$ are defined as in the previous section.
    From the proof of \ref{prop:alphabetagroup}, $H'_{ij} = H_{ij}$ for all $i,j$.
    The result follows.
\end{proof}

\begin{corollary} \label{cor:submissivefixedpoints}
    If $X$ is submissive and $H'$ is any representative of $\alpha\beta([H])$, then $X^H \cong X^{H'}$.
\end{corollary}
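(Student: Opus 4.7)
The plan is to reduce this corollary to \ref{cor:alphabetafixedpoints} using the fact that conjugate subgroups have fixed point sets of the same cardinality (in fact, bijective fixed point sets via the conjugating element).

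First, I would apply \ref{cor:alphabetafixedpoints} directly to obtain the equality $X^H = X^{H_0}$, where $H_0$ is the distinguished representative of $\alpha\beta([H])$ with $H \subseteq H_0$ guaranteed by \ref{prop:alphabetagroup}. Next, given any other representative $H' \in \alpha\beta([H])$, I would pick some $(\bar g,\sigma) \in G \wr \Sigma_n$ with $H' = H_0^{(\bar g,\sigma)}$. Such an element exists precisely because $H_0$ and $H'$ lie in the same conjugacy class.

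Then I would observe the standard fact that for any $G \wr \Sigma_n$-set $Y$ and any element $w \in G \wr \Sigma_n$, the map $Y \to Y \colon y \mapsto w^{-1}y$ restricts to a bijection $Y^K \to Y^{K^w}$ for each subgroup $K \subseteq G \wr \Sigma_n$. Applying this with $Y = X$, $K = H_0$, and $w = (\bar g,\sigma)$ yields a bijection $X^{H_0} \cong X^{H'}$. Chaining the equality with this bijection gives $X^H = X^{H_0} \cong X^{H'}$, completing the proof.

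There is no genuine obstacle here: the heavy lifting is in \ref{cor:alphabetafixedpoints} and \ref{prop:alphabetagroup}, and the only content beyond that is the elementary observation that fixed point sets of conjugate subgroups are canonically bijective. Note that \emph{submissivity is used only through the invocation of} \ref{cor:alphabetafixedpoints}; the conjugation argument itself works for any finite $G \wr \Sigma_n$-set.
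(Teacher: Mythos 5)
Your proof is correct and is exactly the argument the paper leaves implicit: combine \ref{cor:alphabetafixedpoints} (which gives equality $X^H = X^{H_0}$ for the distinguished representative $H_0 \supseteq H$) with the elementary bijection $X^{H_0} \cong X^{H'}$ induced by the conjugating element. No issues.
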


We say that a subgroup $H \subseteq G \wr \Sigma_n$ is a transitive if the induced action of $H$ on $[n]$ is transitive. 
By construction, the subgroups $\pi_{Hi} H \subseteq G \wr \Sigma_{Hi}$ of \ref{prop:fixedpointsproduct} are transitive. 

\begin{proposition} \label{prop:transitivefixedpoints}
    Let $X$ be a finite $G$-set, $H \subseteq G \wr \Sigma_n$ be a transitive subgroup, and $i \in [n]$.
    Then, the projection $\pi_i \colon X^{\times n} \to X$ induces a bijection
    \[ (X^{\times n})^H \cong X^{H_{ii}}. \]
\end{proposition}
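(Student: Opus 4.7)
The plan is to construct an explicit inverse to $\pi_i$ using the fact that transitivity of $H$ on $[n]$ guarantees $H_{ij} \neq \emptyset$ for every $j \in [n]$. The key characterization I will use, already appearing in the proof of \ref{cor:alphabetafixedpoints}, is that a tuple $x = (x_1,\ldots,x_n) \in X^{\times n}$ lies in $(X^{\times n})^H$ if and only if $g x_k = x_\ell$ for every $k,\ell \in [n]$ and every $g \in H_{k\ell}$. In particular, taking $k=\ell=i$ shows that $\pi_i$ does land in $X^{H_{ii}}$, so the map is well-defined.

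For injectivity, if $x, x' \in (X^{\times n})^H$ satisfy $x_i = x'_i$, then for each $j \in [n]$ we may pick $g \in H_{ij}$ (nonempty by transitivity) and obtain $x_j = g x_i = g x'_i = x'_j$, so $x = x'$.

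For surjectivity, given $y \in X^{H_{ii}}$, I define $x_j = g y$ for any choice of $g \in H_{ij}$ and set $x_i = y$. The subtle step is well-definedness: if $g, g' \in H_{ij}$ then $g^{-1}g' \in H_{ji} H_{ij} \subseteq H_{ii}$ by the groupoid-like relations recorded before \ref{def:beta}, so $g^{-1}g' y = y$ and hence $g y = g' y$. I then need to check that the assembled $x$ is $H$-fixed; using the characterization recalled above, it suffices to verify $h x_j = x_k$ for every $h \in H_{jk}$. Picking $g \in H_{ij}$ so that $x_j = g y$, the composite $hg$ lies in $H_{jk}H_{ij} \subseteq H_{ik}$, whence $h x_j = (hg) y = x_k$ by construction. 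Finally $\pi_i(x) = y$ by definition.

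The argument is essentially a formal manipulation of the groupoid-like relations on the sets $H_{k\ell}$, and the only real content is ensuring the well-definedness of the inverse map; the main obstacle is just keeping track of which $H_{k\ell}$ contains which element. No deeper input is required beyond the already-established description of $(X^{\times n})^H$ in terms of the $H_{k\ell}$.
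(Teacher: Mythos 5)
Your proof is correct and follows essentially the same route as the paper: both use the characterization $(X^{\times n})^H = \{x : gx_j = x_k \ \forall g \in H_{jk}\}$, the nonemptiness of $H_{ij}$ coming from transitivity, and the groupoid-like identities $H_{jk}H_{ij} \subseteq H_{ik}$ to assemble an inverse to the projection. The only cosmetic difference is that the paper fixes a choice $g_j \in H_{ij}$ up front and defines the inverse via $x \mapsto (g_j x)_j$, while you define $x_j = gy$ for an arbitrary $g \in H_{ij}$ and verify well-definedness separately; these are the same argument packaged differently.
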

\begin{proof}
    As in the proof of the previous proposition,
    \[ (X^{\times n})^H = \{ x \in X^{\times n} \mid g x_j = x_k \ \forall j,k \in [n] \ \forall g \in H_{jk} \}. \]
    With this in mind, it is clear that the projection $\pi_i \colon (X^{\times n})^H \to X$ lands in the desired fixed points $X^{H_{ii}}$.
    To see that this is a bijection, choose $g_j \in H_{ij}$ for each $j \in [n]$, which is possible since $H$ is transitive.
    Define the inverse of the projection by $x \in X^{H_{ii}} \mapsto y(x) \in X^{\times n}$ where $y(x)_j$ is defined to be $g_jx$.
    This lands in the $H$-fixed points as desired because for $h \in H_{jk}$, 
    \[ hy(x)_j = hg_j x = g_k g_k^{-1}hg_j x = g_k x = y(x)_k, \]
    since $g_k^{-1}hg_j \in H_{ii}$ and $H_{ii}$ fixes $x$.
\end{proof}

\begin{corollary} \label{cor:fixedpointpower}
    Let $X$ be a finite $G$-set and $H \subseteq G \wr \Sigma_n$.
    There is a bijection
    \[ (X^{\times n})^H \cong \prod_{Hi \in H \under [n]} X^{H_{ii}}. \]
\end{corollary}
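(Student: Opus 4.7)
The plan is to obtain \ref{cor:fixedpointpower} by directly chaining the two preceding propositions, noting that the hypotheses line up without further work.

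First, I would apply \ref{prop:fixedpointsproduct} to the finite $G$-set $X$ and the subgroup $H \subseteq G \wr \Sigma_n$, which yields
\[ (X^{\times n})^H \cong \prod_{Hi \in H\under [n]} (X^{\times Hi})^{\pi_{Hi} H}. \]
Each projection $\pi_{Hi} H \subseteq G \wr \Sigma_{Hi}$ is a transitive subgroup by construction, since $Hi$ is an $H$-orbit on which $H$ and hence $\pi_{Hi}H$ act transitively. So I can apply \ref{prop:transitivefixedpoints} to each factor, using the element $i \in Hi$ as the distinguished basepoint, to conclude that
\[ (X^{\times Hi})^{\pi_{Hi} H} \cong X^{(\pi_{Hi} H)_{ii}}. \]

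The only point requiring a short verification is that $(\pi_{Hi} H)_{ii} = H_{ii}$. This follows directly from the definitions: unpacking, $(\pi_{Hi}H)_{ii}$ consists of the components $\bar g_i$ for $(\bar g, \sigma) \in H$ such that $\sigma$ fixes $i$ when restricted to $Hi$; but since $Hi$ is $H$-stable, $\sigma$ fixing $i$ in $Hi$ is the same as $\sigma$ fixing $i$ in $[n]$, so this set is exactly $H_{ii}$. Combining these two bijections gives the desired isomorphism
\[ (X^{\times n})^H \cong \prod_{Hi \in H \under [n]} X^{H_{ii}}, \]
completing the proof. The main (and only) subtlety is the identification $(\pi_{Hi} H)_{ii} = H_{ii}$, but this is immediate once one notes that $H$ preserves the orbit decomposition of $[n]$.
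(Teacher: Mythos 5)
Your proof is correct and follows exactly the same route as the paper: apply \ref{prop:fixedpointsproduct}, observe that each $\pi_{Hi}H$ is transitive so \ref{prop:transitivefixedpoints} applies, and check that $(\pi_{Hi}H)_{ii} = H_{ii}$. The paper states this just as tersely, so your spelled-out verification of the last identity is a welcome bit of extra detail.
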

\begin{proof}
    This follows from \ref{prop:fixedpointsproduct} and \ref{prop:transitivefixedpoints} together with the fact that $(\pi_{Hi} H)_{ii} = H_{ii}$.
\end{proof}

\begin{corollary} \label{cor:partitionfixedpoints}
    Let $X$ be a finite $G$-set and $\lambda \in \Parts(G,n)$. Choose a representative $H \in \alpha(\lambda)$.
    Then,
    \[ |(X^{\times n})^H| = \prod_{[K],m} |X^K|^{\lambda_{[K],m}}. \]
\end{corollary}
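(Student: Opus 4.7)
The plan is to reduce to Corollary \ref{cor:fixedpointpower} by choosing the particularly explicit representative of $\alpha(\lambda)$ described in Definition \ref{def:alpha}. Since $|(X^{\times n})^H|$ depends only on the conjugacy class $[H]$, we are free to pick any convenient $H \in \alpha(\lambda)$. Following Section \ref{sec:alpha}, we take
\[ H = \prod_{[K],m} (K \wr \Sigma_m)^{\times \lambda_{[K],m}} \subseteq \prod_{[K],m} (G \wr \Sigma_m)^{\times \lambda_{[K],m}} \subseteq G \wr \Sigma_n, \]
using the identification $[n] \cong \coprod_{[K],m} [\lambda_{[K],m}] \times [m]$ from Subsection \ref{subsec:fin sets}.

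Next I would identify the $H$-orbits on $[n]$ and the associated subgroups $H_{jj}$. Each factor $K \wr \Sigma_m$ acts on its corresponding block $\{([K],m,i)\} \times [m]$ (for fixed $i \in [\lambda_{[K],m}]$) and fixes everything outside that block; the $\Sigma_m$ quotient acts transitively on $[m]$. So the $H$-orbits on $[n]$ are exactly the blocks indexed by triples $([K],m,i)$ with $i \in [\lambda_{[K],m}]$, each of size $m$. For any $j = ([K],m,i,k)$ in such an orbit, unpacking Definition \ref{def:beta} (or directly from the definition of $H_{jj}$ in Section \ref{sec:beta}) shows that $H_{jj} = K$ — the $G$-component of an element of $K \wr \Sigma_m$ lying above a permutation $\sigma$ with $\sigma(k) = k$ is just a choice of element of $K$ in the $k$-th coordinate.

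With this bookkeeping complete, Corollary \ref{cor:fixedpointpower} gives
\[ |(X^{\times n})^H| = \prod_{Hj \in H \under [n]} |X^{H_{jj}}| = \prod_{[K],m} \prod_{i \in [\lambda_{[K],m}]} |X^K| = \prod_{[K],m} |X^K|^{\lambda_{[K],m}}, \]
which is the claimed formula. Since fixed-point cardinalities are conjugation-invariant, this is independent of the choice of representative $H \in \alpha(\lambda)$, completing the proof.

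The only real obstacle is the indexing bookkeeping: verifying that the orbit decomposition of $[n]$ under $H$ matches the block decomposition from the chosen identification, and that the stabilizer-type subgroups $H_{jj}$ really come out to (representatives of) the $[K]$ decorating each orbit. Both are immediate from the definitions once the right representative is chosen, which is why I would use the explicit $H$ above rather than work with an arbitrary representative.
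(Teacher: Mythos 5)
Your proof is correct and matches the paper's (very terse) argument: the paper simply says the corollary follows from \ref{cor:fixedpointpower} and the definition of $\alpha$, and you have correctly unwound what that means by choosing the explicit representative $\prod_{[K],m}(K \wr \Sigma_m)^{\times\lambda_{[K],m}}$, identifying its orbits on $[n]$ as the blocks $\{([K],m,i)\}\times[m]$, and checking $H_{jj}=K$ on each block. No gap; the only remark is that the explicit product formula lives just after Definition \ref{def:alpha} rather than in it, which does not affect the argument.
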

\begin{proof}
    This follows from \ref{cor:fixedpointpower} and \ref{def:alpha} which defines $\alpha$.
\end{proof}

\section{Decorated compositions} \label{sec:decoratedcompositions}

Much like it is often convenient to choose representatives of conjugacy classes of subgroups, it is often useful to choose finer objects representing partitions.
This is the notion a composition, which refers to an indexed or ordered formal sum of integers.

Accordingly, a formal sum in $X \times \N_+$ indexed by a finite set $I$ is called an $I$-indexed $X$-decorated composition, and the collection of such compositions is denoted $C(I,X) = (X \times \N_+)^I$.
For $c \in C(I,X)$ and $i \in I$, the $i$-th pair in $c$ is denoted $c_i$ and is called the $i$-th part of $c$.
We denote the components of these parts as $c_i=(c_i^\dec, c_i^\num)$ so that $c_i^\dec$ denotes the decoration and $c_i^\num$ denotes the integer component.
If $c \in C(I,*)$ is an integer composition, we may write $c_i$ for $c_i^\num$ since the decoration contributes no information.

There is a function 
\[ u \colon C(I,X) \to P(X) \colon c \mapsto uc = \sum_{i \in I} c_i \]
that un-indexes a composition by taking its formal sum.
Equivalently,
\[ uc \colon x,m \mapsto |\{ i \in I \mid c_i = (x,m) \}|. \]
We denote the preimage under $u$ of a partition $\lambda$ by $C(I,\lambda)$ and we call the elements of this set $I$-indexed compositions of $\lambda$.
The set $C(I,\lambda)$ is nonempty if and only if $|I| = |\lambda|$.
Compositions are generally denoted by lowercase Latin letters toward the beginning of the alphabet, but sometimes notation will be chosen to match corresponding partitions e.g.\ we may choose $\ell$ for an element of $C(I,\lambda)$.
Certain expressions involving partitions are related to certain expressions related to compositions.
In particular, if $\lambda \in P(X)$, $\ell \in C(I,\lambda)$, and $f \colon X \times \N_+ \to M$ is a function into a commutative monoid, then
\begin{equation} 
\begin{gathered} \label{eq:independent}
    \sum_{x,m} \lambda_{x,m} \cdot f(x,m) = \sum_i f(\ell_i) \text{ or } \prod_{x,m} f(x,m)^{\lambda_{x,m}} = \prod_i f(\ell_i)
    \\ \text{ or } \bigstar_{x,m} f(x,m)^{\star \lambda_{x,m}} = \bigstar_i f(\ell_i) 
\end{gathered}
\end{equation}
depending on which symbol is used for the operation of $M$.

We extend the operations of partitions to compositions such that operations applied to compositions correspond to operations applied to the underlying partitions.
Addition of partitions corresponds to a concatenation map $+ \colon C(I,X) \times C(J,X) \to C(I\amalg J,X)$ defined by
\[ (c+d)_i = \begin{cases} c_i,& i \in I \\ d_i,& i \in J, \end{cases} \]
for $c \in C(I,X)$ and $d \in C(J,X)$.
For $J \subseteq I$, there is a restriction map $C(I,X) \to C(J,X)\colon c \mapsto c|_J$ which remembers only the $i$-th parts of $c$ for $i \in J$.
This interacts with concatenation so that $c = c|_I + c|_J$ for $c \in C(I \amalg J,X)$.
Consequently the concatenation map is a bijection.
Concatenation of compositions corresponds to addition of partitions in the sense that $u(c+d)=uc+ud$.
Accordingly, for $\lambda,\mu \in P(X)$, the concatenation map restricts to a map $C(I,\lambda) \times C(J,\mu) \to C(I \amalg J,\lambda + \mu)$ but this is not usually bijection, even when the domain and codomain are nonempty.

We define the length and size of a composition $c \in C(I,X)$ by
\[ |c| = |I| = |uc| \text{ and } \|c\| = \sum_i c_i^\num = \|uc\| \]
and the factorial and multifactorial by
\[ c! = (uc)! \text{ and } \mf c = \prod_i c_i^\num! = \mf{uc}. \]
Given a function $f \colon X \to Y$, there is a pushforward map $f_* \colon C(I,X) \to C(I,Y)$ which applies $f$ to the decorations i.e.\ $f_*c_i = (f(c_i^\dec),c_i^\num)$.
This has the property that $f_*uc = uf_*c$.
The pushforward along the map $X \to *$ is the un-decoration map $C(I,X) \to C(I,*) \colon c \mapsto c^\num$.
Similarly, we denote the un-decoration map $P(X) \to P(*) \colon \lambda \mapsto \lambda^\num$ so that
\[ \lambda^\num \colon m \mapsto \sum_x \lambda_{x,m}. \]

When we do not care about the particular set which indexes compositions, we write $C(k,X)$ for $C([k],X)$ where $k \in \N$.
Similarly, we write $C(\lambda)$ for $C(|\lambda|,\lambda)$ and call the elements of this set compositions of $\lambda$.
Sometimes, it is useful to refer to the collection of $X$-decorated compositions of any length.
We denote this set by $C(X) = \coprod_k C(k,X)$.
Concatenation gives $C(X)$ the structure of a monoid, using the identifications $[i] \amalg [j] \cong [i+j]$ described in \ref{subsec:fin sets}.
This monoid is not commutative when $X$ is nonempty, but the un-indexing map is a homomorphism $u \colon C(X) \to P(X)$ witnessing $P(X)$ as the abelianization of $C(X)$.
The map $u \colon C(X) \to P(X)$ is a homomorphism of augmented monoids when $C(X)$ is augmented by the size map (as opposed to the augmentation $c \in C(k,X) \mapsto k$ given by the disjoint union decomposition).
For $c \in C(X)$ and $f \colon X \times \N_+ \to M$ a function into a commutative monoid, the only reasonable interpretation of 
\[ \sum_i f(c_i) \text{ is } \sum_{i \in [|c|]} f(c_i), \]
where $[|c|]$ is the set of size $|c|$, so we usually opt to elide the domains of such sums (or products or $\star$-products).
We may identify $X \times \N_+$ with $C(1,X)$ so that for $c \in C(X)$, $x \in X$, and $m \in \N_+$, $c+(x,m)$ denotes the composition obtained by appending the part $(x,m)$ to the end of $c$.
If $X=*$, we write $c+m$ for the integer composition $c$ with $m$ appended to the end.

It is often useful to choose a composition for a specific partition.
Given a partition $\lambda$, we denote such an unspecified choice by $c(\lambda) \in C(\lambda)$.
The particular values $c(\lambda)_i$ are not defined, but regardless certain expressions in terms of them are still defined.
For example, if $f \colon X \times \N_+ \to M$ is any function into a commutative monoid, then
\begin{gather*}
    \sum_i f(c(\lambda)_i) = \sum_{x,m} \lambda_{x,m} \cdot f(x,m) \text{ or } \prod_i f(c(\lambda)_i) = \prod_{x,m} f(x,m)^{\lambda_{x,m}}
    \\ \text{ or } \bigstar_i f(c(\lambda)_i) = \bigstar_{x,m} f(x,m)^{\star \lambda_{x,m}}
\end{gather*}
is well-defined depending on which symbol is used for the operation of $M$.

Notice that $C(I,X)$ has an action of $\Sigma_I = \Aut(I)$.
For $c \in C(I,X)$, we denote the associated stabilizer by 
\[ \Sigma_c = \{ \sigma \mid \sigma c = c \} \subseteq \Sigma_I. \]
Then, $c! = |\Sigma_c|$.
The collection $C(\lambda)$ is the $\Sigma_{|\lambda|}$-orbit of $c(\lambda) \in C(|\lambda|)$. 
Accordingly, we find that 
\[ \frac{|\lambda|!}{\lambda!} = [\Sigma_{|\lambda|} \colon \Sigma_{c(\lambda)} ] = |C(\lambda)|. \]
The canonical embedding $\Sigma_I \times \Sigma_J \subseteq \Sigma_{I \amalg J}$ gives a relationship between the stabilizers:
$\Sigma_c \times \Sigma_d \subseteq \Sigma_{c+d}$.
This witnesses that
\[ \frac{(c+d)!}{c!d!} = [\Sigma_{c+d} \colon \Sigma_c \times \Sigma_d]. \]
As mentioned, the pushforward along a function $f\colon X \to Y$ does not generally preserve the factorial, but this failure has combinatorial meaning:
$\Sigma_c \subseteq \Sigma_{f_*c}$, and
\[ \frac{f_*\lambda!}{\lambda!} = \frac{(f_*\lambda)!}{\lambda!} = [\Sigma_{f_*c} \colon \Sigma_c]. \]

\section{Examples of partition and composition notation} \label{sec:partexamples}

We dedicate this section to example applications of partitions and compositions.
The results in this section are either reformulations of existing results into new language or are elementary results that can be efficiently stated in terms of partitions and compositions.
We invite the reader to use this section to internalize the notation of the previous section, as it will be heavily used in the remainder of the paper.

\subsubsection*{The multinomial theorem}

The multinomial theorem
\[ \left( \sum_{k=1}^m x_k \right)^n = \sum_{\substack{i_1+\cdots+i_m=n \\ i_1,\dots,i_m \geq 0}} \frac{n!}{\textstyle \prod_k i_k!} \prod_k x_k^{i_k} \]
can be expressed in terms of integer compositions.
An equivalent formulation is
\[ \left( \sum_{i \in I} x_i \right)^n = \sum_{\substack{\|c\|=n \\ J \subseteq I,\, c \in C(J,*)}} \frac{n!}{\mf c} \prod_{i \in J} x_i^{c_i} = \sum_{\substack{\|c\|=n \\ J \subseteq I,\, c \in C(J,*)}} \frac{n!}{\mf c} x^c, \]
where $x^c$ denotes the quantity $\prod_{i \in J} x_i^{c_i}$.
Another equivalent formulation involves integer partitions:
\[ \left( \sum_{i \in I} x_i \right)^n = \sum_{\substack{\|\lambda\| = n \\ \lambda \in P(*)}} \frac{n!}{\mf \lambda} \sum_{\substack{\ell \in C(J,\lambda) \\ J \subseteq I,\, |J|=|\lambda|}} x^\ell. \]
The quantities $n!/\mf\lambda= \|\lambda\|!/\mf\lambda$ are called the multinomial coefficients and are integral.
The necessity of summing over subsets $J \subseteq I$ is an artifact of our definition of integer compositions having positive parts.
While other choices may result in more elegant formulas, we choose this formulation to avoid introducing too many variants of compositions and partitions.

\subsubsection*{The multinomial theorem for power operations}

The notion of $R$-valued power operations on $A$ for $R$ a commutative graded ring and $A$ an abelian group as defined in \ref{sec:universalpowerops} has three axioms: $P_0(x)=1$, $P_n(0) = 0$ for $n>0$ and
\[ P_n(x+y)=\sum_{i+j=n} P_i(x)P_j(y). \]
From this, one can conclude a version of the multinomial theorem without the coefficients:
\[ P_n\left(\sum_{i \in I} x_i\right) = \sum_{\substack{\|c\| = n \\ J \subseteq I,\, c \in C(J,*)}} \prod_{i \in J} P_{c_i}(x_i) = \sum_{\substack{\|c\| = n \\ J \subseteq I,\, c \in C(J,*)}} \hspace{-1em} P_c(x) =  \sum_{\substack{\|\lambda\| = n,\, \ell \in C(J,\lambda) \\ \lambda \in P(*),\, J \subseteq I,\, |J|=|\lambda|}} \hspace{-2em} P_\ell(x), \]
where $P_c(x)$ denotes $\prod_{i \in J} P_{c_i}(x_i)$.
This is essentially a rewriting of \eqref{eq:power op on a sum} in the language of partitions and compositions.

The proof of this formula is the same as the inductive proof of the classical multinomial theorem.
This single formula implies each of the other axioms: $P_0(x)=1$ follows from $n=0$ and $|I|=1$, $P_n(0)=0$ for $n>0$ follows from $|I|=0$, and $P_n(x+y)=\sum_{i+j=n}P_i(x)P_j(y)$ follows from $|I|=2$.
Thus, taking this as the only axiom provides an equivalent definition for power operations.

\subsubsection*{Power operations of additive inverses} 

The language of partitions allows an elegant formula for power operations applied to additive inverses.
Let $P$ be an $R$-valued family of power operations on $A$.
For an integer partition $\lambda \in P(*)$ and $x \in A$, let $P_\lambda(x) = \prod_k P_k(x)^{\lambda_k} \in R_{\|\lambda\|}$.
Similarly, for an integer composition $c \in C(*)$, let $P_c(x) = \prod_i P_{c_i}(x) = P_{uc}(x)$.
Note that this agrees with the notation $P_c(x)$ of the previous subsection when $(x_i)_{i \in J}$ is a constant family.
\begin{proposition}
    \[ P_n(-x) = \sum_{\substack{\|\lambda\| = n \\ \lambda \in P(*)}} (-1)^{|\lambda|} \frac{|\lambda|!}{\lambda!} P_\lambda(x). \]
\end{proposition}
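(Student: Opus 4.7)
The plan is to derive the formula from the inversion of strict units in the formal completion $\widehat R$. By \ref{prop:powrep}, the family $P$ corresponds to a group homomorphism $A \to \widehat R^s$ sending $y \in A$ to the strict unit $\sum_n P_n(y)$. Applying this homomorphism to the identity $(-x) + x = 0 \in A$ and using $\sum_n P_n(0) = 1$ yields
\[ \sum_n P_n(-x) = \left(\sum_n P_n(x)\right)^{-1} \in \widehat R. \]

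Next I would compute this inverse as a formal geometric series. Writing $\sum_n P_n(x) = 1 + f$ where $f = \sum_{m \geq 1} P_m(x)$ has no constant term, the inversion formula in the completion gives
\[ \sum_n P_n(-x) = (1+f)^{-1} = \sum_{k \geq 0} (-1)^k f^k. \]
Expanding $f^k$ by distributivity (the ordinary multinomial expansion of the sum $\sum_{m \geq 1} P_m(x)$) produces $f^k = \sum_{c \in C(k,*)} P_c(x)$, in the composition-indexed notation $P_c(x) = \prod_i P_{c_i}(x)$ introduced just above the proposition. Extracting the degree $n$ part of both sides then yields
\[ P_n(-x) = \sum_{k \geq 0} (-1)^k \sum_{\substack{c \in C(k,*) \\ \|c\| = n}} P_c(x). \]

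Finally I would re-index by the underlying integer partition via the un-indexing map $u \colon C(*) \to P(*)$. For each $\lambda \in P(*)$ with $\|\lambda\| = n$, the fiber of $u$ inside $C(|\lambda|,*)$ is the set $C(\lambda)$, which has cardinality $|\lambda|!/\lambda!$ as recorded in \ref{sec:decoratedcompositions}; moreover every $c \in C(\lambda)$ satisfies $P_c(x) = P_\lambda(x)$ and $|c| = |\lambda|$. Collecting these contributions and grouping by $k = |\lambda|$ yields the desired identity. The rearrangement of formal sums is legitimate because the degree $n$ piece of $\widehat R$ receives contributions from only finitely many $k \leq n$, so no step presents a genuine obstacle — the bulk of the work is just the combinatorial bookkeeping in this last re-indexing.
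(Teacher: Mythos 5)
Your proof is correct, and it takes a genuinely different route from the paper's. The paper proves the identity by induction on $n$: it writes $P_n(0)=P_n(x+(-x))=\sum_{i+j=n}P_i(x)P_j(-x)$, isolates $P_n(-x)$, feeds the induction hypothesis into the other terms, and then shuffles the resulting sum between the partition-indexed and composition-indexed forms to close the induction. You instead invoke \ref{prop:powrep} to reinterpret the whole family $P$ as a group homomorphism $A \to \widehat{R}^s$, so that $\sum_n P_n(-x)$ is literally the multiplicative inverse of the strict unit $\sum_n P_n(x)$ in $\widehat R$; the geometric series expansion of $(1+f)^{-1}$ then does all the work, with the composition-to-partition re-indexing via $|C(\lambda)| = |\lambda|!/\lambda!$ exactly as in the paper's bookkeeping. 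Both arguments are valid. Your version is conceptually cleaner and avoids induction entirely, at the cost of importing the representability machinery of \ref{sec:universalpowerops}; the paper's induction is more elementary and self-contained, using only the multinomial-type axiom for power operations. One minor point worth flagging: you should make explicit that your argument is not merely deriving the fact that $\sum_n P_n(-x)$ is \emph{some} inverse, but that the degree-$n$ piece of the formal geometric series is a finite sum over $k \le n$ — you do note this, and it is the crux that licenses the termwise extraction, so the argument is sound.
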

\begin{proof}
    This can be proved by induction.
    First, note that
    \[ P_n(0) = \sum_{\substack{\|\lambda\| = n \\ |\lambda| = 0}} (-1)^{|\lambda|} \frac{|\lambda|!}{\lambda!} P_\lambda(x) \]
    because the value of the summand is $1$ when $\lambda=0$, and the sum is empty of $n>0$.
    Suppose for the sake of induction that the formula of the proposition holds for $k<n$.
    Then, 
    \begin{align*}
        P_n(0) 
        &=P_n(x-x) 
        = \sum_{i+j=n} P_i(x)P_j(-x)
        = P_n(-x) + \sum_{\substack{i+j=n \\ i > 0}} P_i(x)P_j(-x)
        \\&= P_n(-x) + \sum_{\substack{i+j=n \\ i > 0}} P_i(x)\sum_{\substack{\|\lambda\| = j \\ \lambda \in P(*)}} (-1)^{|\lambda|} \frac{|\lambda|!}{\lambda!} P_\lambda(x)
        \\&= P_n(-x) + \sum_{\substack{i+j=n \\ i > 0}} P_i(x)\sum_{\substack{\|\ell\| = j \\ \ell \in C(*)}} (-1)^{|\ell|} P_\ell(x)
        \\&= P_n(-x) - \sum_{\substack{i+j=n \\ i > 0}} \sum_{\substack{\|\ell\| = j \\ c = \ell + i}} (-1)^{|c|} P_c(x)
        \\&= P_n(-x) - \sum_{\substack{\|c\| = n \\ |c|>0}} (-1)^{|c|} P_c(x)
        \\&= P_n(-x) - \sum_{\substack{\|\lambda\| = n \\ |\lambda|>0}} (-1)^{|\lambda|} \frac{|\lambda|!}{\lambda!} P_\lambda(x).
    \end{align*}
    The equality between the second and third lines follows by reparameterizing from partitions to compositions.
    The equality between the third and fourth lines follows by rewriting in terms of the composition $c=\ell+i$.
    The equality between the fifth and sixth lines follows by reparameterizing back to partitions.
    We find that
    \begin{align*}
        P_n(-x) 
        &= P_n(0) + \sum_{\substack{\|\lambda\| = n \\ |\lambda|>0}} (-1)^{|\lambda|} \frac{|\lambda|!}{\lambda!} P_\lambda(x)
        = \sum_{\substack{\|\lambda\| = n \\ |\lambda|=0}} (-1)^{|\lambda|} \frac{|\lambda|!}{\lambda!} P_\lambda(x) + \sum_{\substack{\|\lambda\| = n \\ |\lambda|>0}} (-1)^{|\lambda|} \frac{|\lambda|!}{\lambda!} P_\lambda(x)
        \\&= \sum_{\|\lambda\| = n} (-1)^{|\lambda|} \frac{|\lambda|!}{\lambda!} P_\lambda(x).
        \qedhere
    \end{align*}
\end{proof}
\begin{corollary} \label{cor:powofdiff} There is an equality
    \[ P_n(x-y) = \sum_{i+\|\lambda\|=n} (-1)^{|\lambda|} \frac{|\lambda|!}{\lambda!} P_i(x) P_\lambda(y). \]
\end{corollary}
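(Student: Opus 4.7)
The plan is to derive this formula as a direct consequence of the preceding proposition together with the additivity axiom for power operations. Since the corollary is stated immediately after the formula for $P_n(-x)$, and the additivity axiom already gives us the splitting $P_n(x+y) = \sum_{i+j=n} P_i(x)P_j(y)$, essentially no new ideas are required: the result is a one-step substitution.

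Concretely, I would start by writing $P_n(x-y) = P_n(x+(-y))$ and applying the additivity axiom to obtain
\[ P_n(x-y) = \sum_{i+j=n} P_i(x) P_j(-y). \]
Then I would substitute the formula from the preceding proposition, namely
\[ P_j(-y) = \sum_{\substack{\|\lambda\|=j \\ \lambda \in P(*)}} (-1)^{|\lambda|} \frac{|\lambda|!}{\lambda!} P_\lambda(y), \]
into the right-hand side, yielding
\[ P_n(x-y) = \sum_{i+j=n} P_i(x) \sum_{\substack{\|\lambda\|=j \\ \lambda \in P(*)}} (-1)^{|\lambda|} \frac{|\lambda|!}{\lambda!} P_\lambda(y). \]
Finally, I would combine the two sums, recognizing that the condition $i+j=n$ with $j=\|\lambda\|$ is exactly the single condition $i+\|\lambda\|=n$, giving the stated expression.

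There is no real obstacle here, since both ingredients (additivity and the formula for $P_n(-x)$) are already established. The only minor bookkeeping point is to check that the cases $j=0$ and $i=0$ are handled correctly: when $j=0$ the only partition of size $0$ is the empty partition, for which $(-1)^{|\lambda|} \frac{|\lambda|!}{\lambda!} P_\lambda(y) = 1$, matching $P_0(-y)=1$; and the term $i=0$ contributes $P_0(x) P_n(-y) = P_n(-y)$ as expected. Thus the indexing $i+\|\lambda\|=n$ with $i \geq 0$ and $\lambda \in P(*)$ correctly enumerates all the terms without duplication or omission.
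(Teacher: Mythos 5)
Your proof is correct and is exactly the intended derivation: the paper leaves this corollary without a written proof precisely because it follows by applying the additivity axiom to $x + (-y)$ and then substituting the preceding proposition's formula for $P_j(-y)$. Your bookkeeping check of the boundary cases $i=0$ and $j=0$ is also accurate.
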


\begin{corollary} \label{cor:pos enough}
    Any summand of $A(G \wr \Sigma_n)$ spanned by a subset of the canonical basis and containing the image of $\P_n \colon A^+(G) \to A(G \wr \Sigma_n)$ also contains the image of $\P_n \colon A(G) \to A(G \wr \Sigma_n)$.
\end{corollary}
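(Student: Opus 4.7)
The plan is to express $\P_n(x)$ for an arbitrary $x \in A(G)$ as a $\Z$-linear combination of basis elements of $A(G \wr \Sigma_n)$, and then show that every such basis element with non-zero coefficient also appears in $\P_n$ applied to some genuine $G$-set. Since the summand is spanned by a subset of the canonical basis, this will suffice.

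First, I would write $x = X - Y$ for some $X, Y \in A^+(G)$ and invoke \ref{cor:powofdiff} to get
\[ \P_n(X-Y) = \sum_{i + \|\lambda\| = n} (-1)^{|\lambda|} \frac{|\lambda|!}{\lambda!} \P_i(X) \star \P_\lambda(Y). \]
Each product $\P_i(X) \star \P_\lambda(Y)$ is the class of a genuine $G \wr \Sigma_n$-set (being a transfer product of products of classes of genuine $G \wr \Sigma_k$-sets), so it expands as a non-negative integer combination of canonical basis elements. Let $B_0 \subseteq A(G \wr \Sigma_n)$ denote the set of canonical basis elements appearing with non-zero coefficient in at least one of these products as $(i,\lambda)$ ranges over the summation. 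Then the basis elements appearing in $\P_n(X-Y)$ form a subset of $B_0$.

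Next I would compute $\P_n(X + mY)$ for a sufficiently large positive integer $m$ by applying the general additive formula \eqref{eq:power op on a sum} to $X + \underbrace{Y + \dots + Y}_{m}$ and collecting terms by the partition $\lambda$ formed by the exponents of the $\P_k(Y)$'s. The result is
\[ \P_n(X + mY) = \sum_{\substack{i + \|\lambda\| = n \\ |\lambda| \leq m}} \frac{m!}{\lambda!\,(m-|\lambda|)!}\, \P_i(X) \star \P_\lambda(Y), \]
where the multinomial coefficient is strictly positive. Taking $m \geq n$ ensures every pair $(i,\lambda)$ with $i + \|\lambda\| = n$ contributes, and every coefficient is positive. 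Since each $\P_i(X) \star \P_\lambda(Y)$ is itself a non-negative integer combination of basis elements, no cancellation can occur, and the set of basis elements appearing in $\P_n(X + mY)$ is exactly $B_0$.

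Because $X + mY \in A^+(G)$, the hypothesis gives $\P_n(X+mY) \in S$. Since $S$ is a summand spanned by a subset of the canonical basis, this forces every basis element in $B_0$ to lie in $S$. As the basis elements appearing in $\P_n(X-Y)$ form a subset of $B_0$, we conclude $\P_n(X - Y) \in S$. The only delicate point, and the one I would be careful to articulate, is the absence of cancellation among the non-negative expansions of $\P_i(X) \star \P_\lambda(Y)$ when assembled with positive coefficients; this is what guarantees that all of $B_0$ genuinely appears in $\P_n(X+mY)$ and thus in $S$.
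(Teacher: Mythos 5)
Your proof is correct and takes essentially the same approach as the paper's, which also reduces to \ref{cor:powofdiff} and observes that each summand $\P_i(X) \star \P_\lambda(Y)$ appears in the expansion of $\P_n(X + mY)$ for suitable $m$ (the paper uses $m = |\lambda|$ per term, you uniformly take $m \geq n$). Your added care about the absence of cancellation among the non-negative basis expansions is a detail the paper leaves implicit, and it is indeed the crux making the argument work.
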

\begin{proof}
    This follows from the fact that each of the summands $P_i(x)P_\lambda(y)$ appearing in the formula for $P_n(x-y)$ is one of the summands in the expansion of $P_n(x+|\lambda|y)$.
\end{proof}

In the case of the Burnside ring (or other similar cases), the power operations $\P_n \colon A(G) \to \AA(G,n)$ are multiplicative (with the ring structure internal to $\AA(G,n)$, not the transfer product) so that $\P_n(-X) = \P_n(-1)\P_n(X)$.
In this case, it is only necessary to evaluate this formula to compute $\P_n(-1)$.
\begin{corollary} The following equation holds
    \[ \P_n(-1) = \sum_{\|\lambda\| = n} (-1)^{|\lambda|} \frac{|\lambda|!}{\lambda!} \P_\lambda(1) = \sum_{\|\lambda\| = n} (-1)^{|\lambda|} \frac{|\lambda|!}{\lambda!} \Res_{G \to e}[\Sigma_n / \alpha(\lambda)]. \]
\end{corollary}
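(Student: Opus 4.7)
The plan is to deduce both equalities essentially for free from \ref{cor:powofdiff}.

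For the first equality, I would specialize \ref{cor:powofdiff} at $x=0$, $y=1$. Since $\P_0 \equiv 1$ and $\P_i(0)=0$ for $i>0$, every term in the sum with $i>0$ vanishes, and the $i=0$ term is just $\P_\lambda(1)$. This gives
\[ \P_n(-1) = \sum_{\|\lambda\|=n} (-1)^{|\lambda|}\frac{|\lambda|!}{\lambda!}\P_\lambda(1). \]

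For the second equality, I would unpack $\P_\lambda(1) = \bigstar_k \P_k(1)^{\star\lambda_k} \in \AA(G,n)$ (the product is the graded-ring product on $\AA(G)$, i.e.\ the transfer product $\star$, not the internal ring product on $\AA(G,n)$ to which multiplicativity of $\P_n$ refers in the preceding paragraph). By \ref{prop:transitive power}, $\P_k(1) = \P_k([G/G]) = [(G\wr\Sigma_k)/(G\wr\Sigma_k)]$, and the formula for $\star$ on transitive sets (\ref{def:alpha} and the preceding computation) yields
\[ \P_\lambda(1) = \Bigl[ (G\wr\Sigma_n)\Bigm/\textstyle\prod_k (G\wr\Sigma_k)^{\times\lambda_k} \Bigr]. \]

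It then remains to identify the right-hand side with $\Res_{G \to e}[\Sigma_n/\alpha(\lambda)]$. Here $\lambda \in \Parts(e)$, so $\alpha(\lambda) = [\prod_k \Sigma_k^{\times\lambda_k}] \in \Conj(\Sigma_n)$, using $\alpha((\ast,k)) = [\Sigma_k]$. Restriction along a group homomorphism carries a transitive coset space $G/K$ to the transitive coset space by the preimage subgroup, so the task reduces to computing the preimage of $\prod_k \Sigma_k^{\times\lambda_k} \subseteq \Sigma_n$ under the quotient $q \colon G\wr\Sigma_n \twoheadrightarrow \Sigma_n$. Using $G\wr\Sigma_n = G^n \rtimes \Sigma_n$ and the fact that the subgroups $\Sigma_k^{\times\lambda_k}$ partition the underlying set $[n]$, a direct calculation shows $q^{-1}(\prod_k \Sigma_k^{\times\lambda_k}) = \prod_k (G\wr\Sigma_k)^{\times\lambda_k}$, matching what we want.

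There is no real obstacle here; the only points that require care are (i) remembering that the implicit product defining $\P_\lambda$ is the graded $\star$-product on $\AA(G)$, and (ii) interpreting $\Res_{G \to e}$ as the restriction along the quotient $G \wr \Sigma_n \twoheadrightarrow \Sigma_n$ in the sense of the earlier propositions about how $\Res_\phi$ restricts to $\AA(-,n)$.
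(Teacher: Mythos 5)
Your proposal is correct and matches the paper's (implicit) argument: the paper gives no proof for this corollary, pointing instead at the preceding formula for $\P_n(-x)$, which at $x=1$ gives exactly your first equality (your route through \ref{cor:powofdiff} at $(0,1)$ collapses to the same thing). Your verification of the second equality — unwinding $\P_\lambda(1)=\bigstar_k \P_k(1)^{\star\lambda_k}=[(G\wr\Sigma_n)/\prod_k(G\wr\Sigma_k)^{\times\lambda_k}]$ and identifying $\prod_k(G\wr\Sigma_k)^{\times\lambda_k}$ as the preimage of $\prod_k\Sigma_k^{\times\lambda_k}$ under $G\wr\Sigma_n\twoheadrightarrow\Sigma_n$ — is a correct and useful explication of a step the paper leaves to the reader.
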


\subsubsection*{Transfer products}

The transfer product on $\Parks(G,R)$, defined as in \ref{prop:parksprod} by
\[ (f\star g)(\mu) = \sum_{\kappa + \lambda = \mu} \frac{\mu!}{\kappa!\lambda!} f(\kappa)g(\lambda), \]
for $f,g \in \Parks(G,R)$ and $\mu \in \Parts(G)$, has an alternative description in terms of compositions.

\begin{proposition} \label{prop:compprod}
    For $f,g \in \Parks(G,R)$ and $\mu \in \Parks(G)$,
    \[ (f\star g)(\mu) = \sum_{I \cupdot J = [|\mu|]} f(uc(\mu)|_I) g(uc(\mu)|_J), \]
    where $[|\mu|]$ is the set of size $|\mu|$ and $\cupdot$ denotes the internal disjoint union of subsets of $[|\mu|]$.
    In particular, this sum ranges over $I,J \subseteq [|\mu|]$ such that $I \cap J = \emptyset$ and $I \cup J = I \cupdot J = [|\mu|]$.
    More generally, for $I$ a finite indexing set and $(f_i \in \Parks(G))_{i \in I}$ an $I$-indexed family of parks,
    \[ \left( \bigstar_{i \in I} f_i \right)(\mu) = \sum_{\bigcupdot_{i\in I} J_i = [|\mu|]} \prod_{i \in I} f_i(uc(\mu)|_{J_i}) = \sum_{g \colon [|\mu|] \to I} \prod_{i \in I} f_i(uc(\mu)|_{g^{-1}(i)}), \]
    where the first sum ranges over all pair-wise disjoint $I$-indexed covers $(J_i \subset [|\mu|])_{i \in I}$ of $[|\mu|]$.
\end{proposition}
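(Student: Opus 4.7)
The plan is to reduce both formulas to the definition of $\star$ from \ref{prop:parksprod} via a combinatorial counting argument. The key observation is that the multinomial coefficient $\mu!/(\kappa!\lambda!)$ appearing in the definition of $\star$ counts exactly the number of ways to split the parts of any chosen composition $c(\mu)$ into two sub-compositions whose underlying partitions are $\kappa$ and $\lambda$.

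First I would handle the binary case. Fix $c(\mu) \in C([|\mu|], \mu)$, and for each decomposition $\mu = \kappa + \lambda$ count the subsets $I \subseteq [|\mu|]$ satisfying $uc(\mu)|_I = \kappa$ (which forces $uc(\mu)|_{[|\mu|]\setminus I} = \lambda$). Since $c(\mu)$ has exactly $\mu_{x,m}$ positions labeled with the part $(x,m)$ and we must pick $\kappa_{x,m}$ of them to place in $I$, the count factors as $\prod_{x,m} \binom{\mu_{x,m}}{\kappa_{x,m}} = \mu!/(\kappa!\lambda!)$. Reindexing the sum $\sum_{I \cupdot J = [|\mu|]}$ by first grouping subsets according to the decomposition $(\kappa,\lambda)$ they induce then recovers the defining formula of \ref{prop:parksprod}. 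Independence from the choice of $c(\mu)$ is automatic: any two such choices differ by an element of $\Sigma_{[|\mu|]}$, which permutes the indexing subsets without altering the summand.

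For the general case, there are two natural approaches: induct on $|I|$ using associativity of $\star$ and the binary case as the base, or generalize the counting directly. I would opt for the direct approach. For a function $g \colon [|\mu|] \to I$, the restrictions $uc(\mu)|_{g^{-1}(i)}$ define partitions $\lambda_i$ with $\sum_i \lambda_i = \mu$, and by the same argument as in the binary case the number of $g$ realizing a fixed tuple $(\lambda_i)_{i \in I}$ is the multinomial $\mu!/\prod_i \lambda_i!$, which matches the coefficient produced by iterating \ref{prop:parksprod}. I do not expect significant obstacles; the content is essentially a multinomial identity, and the only subtle point is independence from the arbitrary choice of $c(\mu)$, which follows from the symmetry of the indexing sum.
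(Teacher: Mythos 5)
Your proof is correct and takes essentially the same approach as the paper: both reindex the sum over decompositions $I \cupdot J = [|\mu|]$ by the underlying partition decomposition $\kappa + \lambda = \mu$, and observe that the overcounting factor is exactly the product of binomial coefficients $\prod_{x,m}\binom{\mu_{x,m}}{\kappa_{x,m}} = \mu!/(\kappa!\lambda!)$. You additionally spell out independence from the choice of $c(\mu)$ and treat the general $I$-indexed case explicitly, both of which the paper leaves implicit.
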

\begin{proof}
    The formula follows from the observation that
    \[ \sum_{I \cupdot J = [|\mu|]} f(uc(\mu)|_I) g(uc(\mu)|_J) = \sum_{\kappa + \lambda = \mu} \left|\left\{ I \cupdot J = [|\mu|] \colon \substack{uc(\mu)|_I = \kappa \\ uc(\mu)|_J = \lambda} \right\}\right| f(\kappa)g(\lambda). \] 
    Then, note that a choice of $I \cupdot J = [|\mu|]$ such that $uc(\mu)|_I = \kappa$ and $uc(\mu)|_J = \lambda$ amounts to a choice of $\kappa_{x,m}$-many of the $i \in [|\mu|]$ such that $c(\mu)_i = (x,m)$ for each $x$ and $m$, which is enumerated by the binomial coefficient $\mu_{x,m}!/\kappa_{x,k}!\lambda_{x,m}!$.
\end{proof}

\begin{corollary}
    The parks homomorphism $\chi \colon \AA(G) \to \Parks(G)$ is given by
    \[ \chi([(G \wr \Sigma_n)/\alpha(\lambda)])(\kappa) = \sum_{\substack{g \colon [|\kappa|] \to [|\lambda|] \\ \| c(\kappa)|_{g^{-1}(i)} \| = c(\lambda)_i^\num }} \prod_{j \in [|\kappa|]} \chi([G/c(\lambda)_{g(j)}^\dec])(c(\kappa)_j^\dec) \]
    for $\kappa,\lambda \in \Parts(G)$, where $[G/c(\lambda)^\dec_i]$ denotes the transitive $G$-set corresponding to the conjugacy class $c(\lambda)^\dec_i \in \Conj(G)$ i.e.\ $[G/c(\lambda)^\dec_i] = [G/H]$ for any $H \in c(\lambda)^\dec_i$. 
    Note that this formula yields $0$ when $\kappa$ and $\lambda$ do not have the same size.
\end{corollary}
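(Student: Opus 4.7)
The plan is to reduce the computation to evaluating $\chi$ on polynomial generators of $\AA(G)$ and then expanding the resulting iterated $\star$-product in $\Parks(G)$. First, using \eqref{eq:alpha star prod} and the composition version \eqref{eq:independent} of its $\star$-product expression, I would rewrite
\[ [(G \wr \Sigma_n)/\alpha(\lambda)] = \bigstar_{[H],m} \P_m([G/H])^{\star \lambda_{[H],m}} = \bigstar_{i \in [|\lambda|]} \P_{c(\lambda)_i^\num}([G/c(\lambda)_i^\dec]). \]
Since $\chi \colon \AA(G) \to \Parks(G)$ is a ring homomorphism, this gives
\[ \chi([(G \wr \Sigma_n)/\alpha(\lambda)]) = \bigstar_{i \in [|\lambda|]} \chi\bigl(\P_{c(\lambda)_i^\num}([G/c(\lambda)_i^\dec])\bigr). \]

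Next, I would apply the iterated $\star$-product formula from \ref{prop:compprod} (with indexing set $I = [|\lambda|]$ and argument $\mu = \kappa$) to obtain
\[ \chi([(G \wr \Sigma_n)/\alpha(\lambda)])(\kappa) = \sum_{g \colon [|\kappa|] \to [|\lambda|]} \prod_{i \in [|\lambda|]} \chi\bigl(\P_{c(\lambda)_i^\num}([G/c(\lambda)_i^\dec])\bigr)\bigl(uc(\kappa)|_{g^{-1}(i)}\bigr). \]
Each factor $\chi(\P_m([G/H]))$ lives in $\Parks(G,m)$ and therefore vanishes on any partition of size distinct from $m$. Since pushforward preserves size, $\|uc(\kappa)|_{g^{-1}(i)}\| = \|c(\kappa)|_{g^{-1}(i)}\|$, so the sum restricts to those $g$ satisfying $\|c(\kappa)|_{g^{-1}(i)}\| = c(\lambda)_i^\num$ for every $i$, matching the constraint in the statement.

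It remains to evaluate each factor. The description of $\chi$ from \ref{def:aachar} combined with the formula of \ref{cor:trivial powopformula} gives
\[ \chi\bigl(\P_m([G/H])\bigr)(\mu) = \prod_{[K],m'} \chi([G/H])([K])^{\mu_{[K],m'}}. \]
Since the right-hand side depends on $\mu$ only through the decorations of its parts, applying \eqref{eq:independent} to $\mu = uc(\kappa)|_{g^{-1}(i)}$ yields
\[ \chi\bigl(\P_{c(\lambda)_i^\num}([G/c(\lambda)_i^\dec])\bigr)\bigl(uc(\kappa)|_{g^{-1}(i)}\bigr) = \prod_{j \in g^{-1}(i)} \chi([G/c(\lambda)_i^\dec])(c(\kappa)_j^\dec). \]
Substituting this in and rewriting $\prod_{i} \prod_{j \in g^{-1}(i)}$ as a single product over $j \in [|\kappa|]$ indexed by $g(j)$ gives the formula in the statement.

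The steps above are each routine given the prior results; the only mild obstacle is bookkeeping the translation between partition-indexed products like $\prod_{[H],m} (-)^{\lambda_{[H],m}}$ and composition-indexed products like $\prod_i (-)_{c(\lambda)_i}$ via \eqref{eq:independent}, and tracking how the grading of $\Parks(G)$ forces the size constraint on $g$. No genuinely new ideas are needed beyond the ring homomorphism property of $\chi$ together with \ref{prop:compprod}.
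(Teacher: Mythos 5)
Your proof is correct and matches the paper's intended argument exactly: the paper's one-line proof cites \ref{prop:compprod}, \eqref{eq:alpha star prod}, and \ref{cor:trivial powopformula}, and your writeup is precisely the fleshed-out version of that citation chain (one trivial terminological slip: the map $c(\kappa)|_{g^{-1}(i)} \mapsto uc(\kappa)|_{g^{-1}(i)}$ is the un-indexing map $u$, not a pushforward, though the fact $\|c\| = \|uc\|$ you invoke is correct as stated in \ref{sec:decoratedcompositions}).
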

\begin{proof}
    This follows from \ref{prop:compprod}, \eqref{eq:alpha star prod}, and \ref{cor:trivial powopformula}.
\end{proof}
This formula can be stated more efficiently in terms of refinements of partitions, but we will not pursue this in order to avoid inflicting even more notation related to partitions and compositions on the reader.

\section{Character theory for $\AA(G)$} \label{sec:pullback}

In this section we describe a universal property of $\AA(G)$ in terms of the map $\beta$ and the parks homomorphism of \ref{def:aachar}.

Recall the commutative graded ring $\Marks(G \wr \Sigma) = \bigoplus_n \Marks(G \wr \Sigma_n)$ from \ref{subsec:cgrings} and the map $\beta \colon \Conj(G \wr \Sigma) \to \Parts(G)$ of \ref{def:beta}.
Precomposition with $\beta$ induces a map $\beta^* \colon \Parks(G,R) \to \Marks(G \wr \Sigma,R)$ of graded abelian groups given by
\[ \beta^*f([H]) = f(\beta([H])) \]
for $f \in \Parks(G,R)$ and $[H] \in \Conj(G \wr \Sigma)$.
Since $\beta$ is surjective, $\beta^*$ is injective.

\begin{proposition}
    The map $\beta^* \colon \Parks(G,R) \to \Marks(G \wr \Sigma,R)$ is a homomorphism of commutative graded rings.
\end{proposition}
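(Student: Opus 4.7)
The plan is to verify three things: that $\beta^*$ preserves the grading, the multiplicative unit, and the $\star$-product. The first two are essentially immediate: since $\beta$ is a homomorphism of augmented monoids, $\beta([H]) \in \Parts(G,n)$ whenever $[H] \in \Conj(G \wr \Sigma_n)$, so $\beta^*$ is grading-preserving; and since $\beta$ sends the unique class $[e] \in \Conj(G \wr \Sigma_0)$ to the empty partition $0 \in \Parts(G,0)$, it follows that $\beta^*\1_0 = \1_{[e]}$, so the unit is preserved. The substantive content is the compatibility of $\beta^*$ with the transfer product $\star$.

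For multiplicativity, fix $f \in \Parks(G,i,R)$, $g \in \Parks(G,j,R)$, and $[H] \in \Conj(G \wr \Sigma_n)$ with $n=i+j$. Using the formula from \ref{prop:parksprod},
\[ \beta^*(f \star g)([H]) = \sum_{\substack{\kappa + \lambda = \beta([H]) \\ \|\kappa\|=i,\ \|\lambda\|=j}} \frac{\beta([H])!}{\kappa!\,\lambda!}\, f(\kappa) g(\lambda), \]
while the formula from \ref{subsec:marks tr prod} gives
\[ (\beta^*f \star \beta^*g)([H]) = \sum_{\substack{\sigma(\Sigma_i \times \Sigma_j) \in \Sigma_n/(\Sigma_i \times \Sigma_j) \\ H^\sigma \subseteq G \wr \Sigma_i \times G \wr \Sigma_j}} f\bigl(\beta(\pi_{[i]}[H^\sigma])\bigr)\, g\bigl(\beta(\pi_{[j]}[H^\sigma])\bigr). \]
The plan is to reindex the right-hand sum so that it becomes the left-hand sum.

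The key reparametrization is the following: cosets $\sigma(\Sigma_i \times \Sigma_j) \in \Sigma_n/(\Sigma_i\times\Sigma_j)$ correspond bijectively to ordered decompositions $[n] = A \cupdot B$ with $|A|=i$, $|B|=j$ (via $A = \sigma([i])$, $B = \sigma([j])$), and the condition $H^\sigma \subseteq G \wr \Sigma_i \times G \wr \Sigma_j$ translates to $A$ and $B$ being unions of $H$-orbits in $[n]$. For any such decomposition, one sees directly from \ref{def:beta} that $\beta(\pi_A H) + \beta(\pi_B H) = \beta([H])$, since $\beta$ is computed as a sum over $H$-orbits of $[n]$ and the orbits are partitioned between $A$ and $B$. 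Hence each admissible coset produces a decomposition $\kappa + \lambda = \beta([H])$ with $\|\kappa\|=i$, $\|\lambda\|=j$, via $\kappa = \beta(\pi_A H)$, $\lambda = \beta(\pi_B H)$.

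The final step is to count: for a fixed decomposition $\kappa+\lambda=\beta([H])$, the number of $H$-invariant set-decompositions $[n] = A \cupdot B$ giving rise to this $(\kappa,\lambda)$ is obtained by independently choosing, for each pair $([K],m) \in \Conj(G) \times \N_+$, which $\kappa_{[K],m}$ of the $\beta([H])_{[K],m}$-many $H$-orbits of ``type'' $([K],m)$ are placed into $A$. This gives
\[ \prod_{[K],m} \binom{\beta([H])_{[K],m}}{\kappa_{[K],m}} = \frac{\beta([H])!}{\kappa!\,\lambda!}, \]
which matches the multinomial coefficient on the left-hand side. The main obstacle is this last counting step, and specifically the verification that the notion of ``type'' of an orbit recorded by $\beta$ is exactly what distinguishes orbits for the purposes of the multinomial count; but this is built into the definition of $\beta$ and of the factorial on decorated partitions. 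Combining these observations, the two sums agree, and $\beta^*$ is a ring homomorphism.
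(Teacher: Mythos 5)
Your proof is correct and follows essentially the same route as the paper: both reindex the transfer-product sum by $H$-stable decompositions of $[n]$ (equivalently, decompositions of the set of $H$-orbits), observe that $\beta$ of the two projected factors sum to $\beta([H])$, and then match the multinomial coefficient $\beta([H])!/(\kappa!\lambda!)$ with the count of orbit-splittings. The only cosmetic difference is that the paper packages that final multinomial count into Proposition~\ref{prop:compprod}, whereas you carry it out directly inline.
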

\begin{proof}
    It suffices to verify the claim for $R=\Z$.
    It is immediate that $\beta^*$ preserves the $\star$-unit because $\beta([e \subseteq G \wr \Sigma_0])$ is the empty partition $0$.
    Thus, it suffices to show that $\beta^*$ preserves the transfer product.
    We will verify this on the graded pieces.
    
    Consider $i+j=n$, $f \in \Parks(G,i)$, $g \in \Parks(G,j)$, and $[K] \in \Conj(G \wr \Sigma_n)$.
    Then using the transfer product formula of \ref{subsec:marks tr prod},
    \[ (\beta^* f \star \beta^*g)([K]) = \sum_{\substack{\sigma(\Sigma_i \times \Sigma_j) \in \Sigma_n / \Sigma_i \times \Sigma_j \\ K^\sigma \subseteq G \wr \Sigma_i \times G \wr \Sigma_j}} f(\beta(\pi_{[i]}[K^\sigma]))g(\beta(\pi_{[j]}[K^\sigma])). \]

    Consider the condition $K^\sigma \subseteq G \wr \Sigma_i \times G \wr \Sigma_j$.
    This occurs precisely when each orbit in the quotient $K^\sigma \under [n]$ is completely contained in either $[i]$ or $[j]$. 
    Observe that the $K^\sigma$ orbits are exactly the images under $\sigma^{-1}$ of the $K$ orbits i.e.\ $\sigma^{-1}(Ka) = K^\sigma \sigma^{-1}(a)$ for $a \in [n]$. 
    Thus, such a $\sigma$ determines a decomposition $I \cupdot J = K \under [n]$, where $I = \{ Ka \mid \sigma^{-1}(Ka) \subseteq [i] \}$ and similarly for $J$. Note that $I$ and $J$ are sets of orbits.
    This decomposition has the property that $\sum_{Ka \in I} |Ka| = i$ and similarly for $J$.
    Further, such a decomposition uniquely determines the coset $\sigma(\Sigma_i \times \Sigma_j)$ because any two $\sigma$ resulting in the same decomposition differ by a rearrangement of the elements of $[i]$ and $[j]$ which does not swap elements between them.

    Let $k \in C(\Conj(G), K \under [n])$ be defined by $k_{Ka} = ([K_{aa}], |Ka|)$.
    By \ref{def:beta}, $\beta([K]) = uk$. 
    Now, we seek to write $\beta(\pi_{[i]}[K^\sigma])$ in terms of $k$.
    In this notation, $\pi_{[i]}(K^\sigma)_{aa} = K_{\sigma(a)\sigma(a)}$, and the orbit $\pi_{[i]}(K^\sigma) a \in \pi_{[i]}(K^\sigma) \under [i]$ is just $K^\sigma a = \sigma^{-1}(K \sigma(a))$ so that the set of orbits $\pi_{[i]}(K^\sigma) \under [i]$ is exactly the set $\sigma^{-1}(I)$.
    Thus, it follows that
    \begin{align*}
        \beta(\pi_{[i]}(K^\sigma)) 
        &= \sum_{\pi_{[i]}(K^\sigma) a \in \pi_{[i]}(K^\sigma) \under [i]} ([\pi_{[i]}(K^\sigma)_{aa}], |\pi_{[i]}(K^\sigma) a|) 
        \\&= \sum_{\pi_{[i]}(K^\sigma) a \in \pi_{[i]}(K^\sigma) \under [i]} ([K_{\sigma(a)\sigma(a)}], |K \sigma(a)|)
        \\&= \sum_{K a \in I} ([K_{aa}], |Ka|) = uk|_I
    \end{align*}
    and similarly that $\beta(\pi_{[j]}(K^\sigma)) = uk|_J$.
    Additionally, observe that the condition $\sum_{Ka \in I} |Ka| = i$ is equivalent to $\|k|_I\| = i$ and similarly for $j$.

    Assembling these observations, we find that
    \begin{align*}
        (\beta^* f \star \beta^*g)([K]) = \sum_{\substack{I \cupdot J = K \under [n] \\ \|k|_I\| = i }} f(uk|_I)g(uk|_J) = \sum_{I \cupdot J = K \under [n]} f(uk|_I)g(uk|_J)
    \end{align*}
    where the second equality follows from the fact that $f \in \Parks(G,i)$ is only supported on partitions of size $i$.
    Then, using \ref{prop:compprod}, it follows that
    \[ (\beta^* f \star \beta^*g)([K]) = (f \star g)(uk) = (f\star g)(\beta([K])) = \beta^*(f \star g)([K]). \qedhere \] 
\end{proof}

\begin{proposition} \label{prop:pullback}
    There is a pullback diagram
    \begin{equation} \label{eq:parkssquare}
    \begin{tikzcd}
        \AA(G) \ar[r] \ar[d,"\chi"'] & A(G \wr \Sigma) \ar[d, "\chi"] \\
        \Parks(G) \ar[r, "\beta^*"] & \Marks(G \wr \Sigma),
    \end{tikzcd}
    \end{equation}
    of commutative graded rings where $\chi \colon \AA(G) \to \Parks(G)$ is the map of \ref{def:aachar}.
\end{proposition}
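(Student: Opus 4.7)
The plan is to first verify commutativity of the square on the polynomial generators of $\AA(G)$, then deduce the pullback property from injectivity of the marks homomorphism combined with a rank-plus-summand argument.

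Since all four maps are homomorphisms of commutative graded rings and $\AA(G)$ is generated as a ring by the elements $\P_n([G/H]) = [(G/H)^{\times n}]$ via \ref{cor:AAisPoly}, it suffices to check commutativity on these generators. The explicit formula for the parks homomorphism displayed after \ref{def:aachar}, combined with the definition of $\beta$ in \ref{def:beta}, gives
\[ \beta^*\chi\bigl(\P_n([G/H])\bigr)([J]) = \prod_{[K],m} \chi([G/H])([K])^{\beta([J])_{[K],m}} = \prod_{Ji \in J \under [n]} |(G/H)^{J_{ii}}|, \]
which agrees with $\chi([(G/H)^{\times n}])([J]) = |((G/H)^{\times n})^J|$ by \ref{cor:fixedpointpower}.

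For the pullback property, note that since $\beta$ is surjective, $\beta^*$ is injective, so the pullback is naturally identified with the subgroup $\{y \in A(G \wr \Sigma) : \chi(y) \in \beta^*(\Parks(G))\}$ of $A(G \wr \Sigma)$, and the canonical map from $\AA(G)$ is the inclusion into this subgroup. Working in degree $n$, both $\AA(G,n)$ (by \ref{prop:AAMonRing}) and $\beta^*(\Parks(G,n))$ are free abelian groups of rank $|\Parts(G,n)|$. Since $\Q \tensor \chi$ is an isomorphism onto $\Marks(G \wr \Sigma_n,\Q)$ by \ref{subsec:cgrings}, its restriction to $\Q \tensor \AA(G,n)$ is injective, lands inside $\beta^*(\Parks(G,n))_\Q$ by the commutativity just established, and is therefore an isomorphism onto this target by equality of dimensions. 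Thus any $y \in A(G \wr \Sigma_n)$ with $\chi(y) \in \beta^*(\Parks(G,n))$ must lie in $\Q \tensor \AA(G,n)$ inside $\Q \tensor A(G \wr \Sigma_n)$, and since $\AA(G,n)$ is by definition a direct summand of $A(G \wr \Sigma_n)$, we conclude $y \in \AA(G,n)$.

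The main obstacle is the commutativity verification: one must align the parameterization of $\Parks(G)$ by decorated partitions $\lambda$, the parameterization of $\Marks(G \wr \Sigma)$ by conjugacy classes $[J]$, and the orbit-plus-stabilizer data packaged in $\beta([J])$. Once the fixed-point formula of \ref{cor:fixedpointpower} for powers $X^{\times n}$ is invoked, these three descriptions line up, and the pullback property then follows formally from injectivity of $\chi$ and the summand structure of $\AA(G,n)$.
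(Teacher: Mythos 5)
Your proposal is correct and follows essentially the same route as the paper: both rely on the fixed-point computations (you invoke \ref{cor:fixedpointpower}, the paper uses \ref{cor:submissivefixedpoints} and \ref{cor:partitionfixedpoints}, which are minor variants of the same fact) together with the rationalization/rank comparison and the direct-summand structure of $\AA(G,n)$ inside $A(G\wr\Sigma_n)$. The only difference is the order of steps: you establish commutativity of the square first and then deduce the pullback property, whereas the paper first identifies the pullback $\bar{A}(G)$ with $\AA(G)$ as a subgroup and only afterward checks that the induced map to $\Parks(G)$ agrees with the parks homomorphism.
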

\begin{proof}
    Let $\bar A(G)$ be the pullback.
    This may be identified with the subgroup $\bar A(G) \subseteq A(G \wr \Sigma)$ whose characters in $\Marks(G \wr \Sigma)$ lie in the image of $\beta^*$.
    Since $\beta$ has a section $\alpha$, the image of $\beta^*$ is exactly the collection of $f \in \Marks(G \wr \Sigma)$ such that $f([H]) = f(\alpha\beta([H]))$ for each $[H] \in \Conj(G \wr \Sigma)$.
    By \ref{cor:submissivefixedpoints}, every isomorphism class of a submissive $G \wr \Sigma_n$-set is in $\bar{A}(G)$. 
    This provides an inclusion $\AA(G) \subseteq \bar A(G)$.

    Since $\Q$ is flat and $\Q \tensor A(G\wr \Sigma) \cong \Marks(G\wr \Sigma,\Q)$, it follows that $\Q \tensor \bar A(G) \cong \Parks(G,\Q)$.
    Since $\bar A(G)$ is a graded submodule of the free graded $\Z$-module $A(G \wr \Sigma)$, it follows that $\bar A$ is a free graded $\Z$-module on $|{\Parks(G,n)}|$-many generators of degree $n$.
    Since $\bar A(G)$ contains the subset of the generators of $A(G \wr \Sigma)$ of the form $\alpha(\lambda)$ for $\lambda \in \Parts(G)$, it follows that $\bar A(G)$ is the summand $\AA(G)$.

    All that remains to show is that the map $\AA(G) \cong \bar A(G) \to \Parks(G)$ induced by the pullback is the map $\chi \colon \AA(G) \to \Parks(G)$ of \ref{def:aachar}, and for this it suffices to show that \eqref{eq:parkssquare} commutes.
    To show the square commutes, it suffices to check on the polynomial generators $\P_n([G/H])$ of \ref{cor:AAisPoly}.
    The fact that the square commutes on polynomial generators follows from \ref{def:alpha} of $\alpha$ and \ref{def:aachar} of $\chi \colon \AA(G) \to \Parks(G)$ together with \ref{cor:partitionfixedpoints}.
\end{proof}

\begin{corollary}
    The parks homomorphism $\chi \colon \AA(G) \to \Parks(G)$ is a graded subring embedding.
    In particular, the character maps $\AA(G,n) \to \Parks(G,n)$ are injective.
\end{corollary}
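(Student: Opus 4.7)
The plan is to deduce this immediately from the pullback square of \ref{prop:pullback}, using a standard diagram chase together with the injectivity of the classical marks homomorphism.

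First I would recall two injectivity facts that are already established in the paper: the marks homomorphism $\chi \colon A(G \wr \Sigma) \to \Marks(G \wr \Sigma)$ is injective (this is the level-wise statement from \ref{subsec:marks}, assembled as in \ref{subsec:cgrings}), and the inclusion $\AA(G) \hookrightarrow A(G \wr \Sigma)$ is injective by construction as a summand. I would also note that $\chi \colon \AA(G) \to \Parks(G)$ is a map of commutative graded rings by \ref{def:aachar}, so being a graded subring embedding reduces to checking injectivity.

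Next I would carry out the diagram chase. Suppose $x, y \in \AA(G)$ satisfy $\chi(x) = \chi(y) \in \Parks(G)$. Applying $\beta^*$ and using commutativity of the square of \ref{prop:pullback}, we get that $\chi(x) = \chi(y)$ when we view $x, y$ as elements of $A(G \wr \Sigma)$. Since $\chi \colon A(G \wr \Sigma) \to \Marks(G \wr \Sigma)$ is injective, this forces $x = y$ in $A(G \wr \Sigma)$, and therefore in $\AA(G)$ since the inclusion $\AA(G) \hookrightarrow A(G \wr \Sigma)$ is injective. This gives injectivity of the parks homomorphism, and the level-wise statement $\AA(G,n) \to \Parks(G,n)$ follows since $\chi$ preserves the grading.

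There is essentially no obstacle here: the content lives entirely in \ref{prop:pullback}, and this corollary is just the standard categorical observation that in a pullback square the parallel of a monomorphism is a monomorphism. If preferred, one could instead give a slightly more hands-on argument by identifying the pullback with $\{(a,p) \in A(G \wr \Sigma) \times \Parks(G) : \chi(a) = \beta^*(p)\}$ and noting that the projection to the $\Parks(G)$ factor is injective because $\chi$ on $A(G \wr \Sigma)$ is, but either phrasing is only a few lines.
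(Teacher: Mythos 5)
Your proof is correct and is essentially the argument the paper intends: the corollary is an immediate consequence of \ref{prop:pullback} together with injectivity of the marks homomorphism, and your diagram chase (equivalently, the categorical fact that a pullback of a monomorphism is a monomorphism) is exactly the right way to spell that out.
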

\begin{corollary}
    The parks homomorphism is rationally an isomorphism.
    That is,
    \[ \Q \otimes \chi \colon \Q \tensor \AA(G) \to \Q \tensor \Parks(G) \cong \Parks(G,\Q) \]
    is an isomorphism.
    In particular, the maps $\Q \otimes \AA(G,n) \to \Parks(G,n,\Q)$ are isomorphisms.
\end{corollary}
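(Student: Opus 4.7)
My plan is to derive this directly from the pullback square established in \ref{prop:pullback}. First I would apply $\Q \tensor -$ to that square; since $\Q$ is flat over $\Z$, tensoring preserves the pullback (equivalently: the pullback of abelian groups is the kernel of $A \oplus C \to D$, and $\Q \tensor -$ preserves kernels), so the resulting diagram
\[
\begin{tikzcd}
    \Q \tensor \AA(G) \ar[r] \ar[d,"\Q \tensor \chi"'] & \Q \tensor A(G \wr \Sigma) \ar[d, "\Q \tensor \chi"] \\
    \Parks(G,\Q) \ar[r, "\beta^*"] & \Marks(G \wr \Sigma,\Q)
\end{tikzcd}
\]
is still a pullback. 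As recalled in \ref{subsec:cgrings}, the marks homomorphism $\chi \colon A(G \wr \Sigma) \to \Marks(G \wr \Sigma)$ becomes an isomorphism after rationalization, so the right vertical map above is an isomorphism.

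Next I would invoke the elementary fact that in a pullback square of abelian groups, if one of a pair of parallel morphisms is an isomorphism, so is the other. Concretely, if the pullback is realized as $\{(a,c) : f(a) = g(c)\}$ and $f$ is bijective, the projection to the $c$-coordinate is bijective. Applying this to the square above immediately yields that $\Q \tensor \chi \colon \Q \tensor \AA(G) \to \Parks(G,\Q)$ is an isomorphism.

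For the ``in particular'' statement, every map in the square is grading-preserving, so the square restricts to a pullback square in each degree $n$. Running the same argument one degree at a time gives that $\Q \tensor \AA(G,n) \to \Parks(G,n,\Q)$ is an isomorphism for each $n$.

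I do not anticipate any real obstacle: the core of the argument already appears implicitly in the proof of \ref{prop:pullback}, where a parallel flatness argument was used to identify $\bar A(G)$ as a free graded $\Z$-module of the correct rank in each degree. The only remaining content here is to repackage that observation as the asserted isomorphism of graded rings.
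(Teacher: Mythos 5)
Your argument is correct and coincides with the paper's own (largely implicit) reasoning: the proof of \ref{prop:pullback} already contains exactly this flatness argument, identifying $\Q \tensor \bar A(G) \cong \Parks(G,\Q)$ by rationalizing the pullback square and using that the marks homomorphism is rationally an isomorphism. The paper simply records this as a corollary without a separate proof, and your write-up makes explicit the standard pullback-of-an-isomorphism lemma and the degree-wise restriction that the paper leaves to the reader.
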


\begin{corollary} \label{cor:levelpullback}
    There is a pullback diagram
    \[\begin{tikzcd}
        \AA(G,n) \ar[r] \ar[d, "\chi"'] & A(G \wr \Sigma_n) \ar[d, "\chi"] \\
        \Parks(G,n) \ar[r, "\beta^*"] & \Marks(G \wr \Sigma_n)
    \end{tikzcd}\]
    of commutative rings.
    In particular, $\chi \colon \AA(G,n) \to \Parks(G,n)$ is a map of rings.
\end{corollary}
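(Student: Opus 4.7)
The plan is to derive this corollary directly from the graded pullback square of \ref{prop:pullback} by restricting to degree $n$. All four maps in that square preserve the $\N$-grading by construction, so the pullback of graded abelian groups decomposes levelwise. Extracting the degree-$n$ summand of each object thus yields the claimed square as a pullback of abelian groups.

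Next, I would upgrade this to a pullback of commutative rings with respect to the internal (pointwise) product on $\Parks(G,n)$ and $\Marks(G \wr \Sigma_n)$---rather than the $\star$-product, which mixes degrees---by verifying that each of the four maps is a ring map for these internal structures. The marks homomorphism $\chi \colon A(G \wr \Sigma_n) \to \Marks(G \wr \Sigma_n)$ is a ring map by \ref{subsec:marks}. The map $\beta^*$ is a ring map since precomposition with $\beta$ preserves pointwise multiplication. The inclusion $\AA(G,n) \hookrightarrow A(G \wr \Sigma_n)$ is a ring map because $\AA(G,n)$ is a subring by \ref{prop:AAisKsub}. The abelian group pullback therefore inherits a natural ring structure from $\Parks(G,n) \times A(G \wr \Sigma_n)$ as a subring, and this structure makes it the pullback in commutative rings. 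Since $\beta^*$ is injective and the composite $\AA(G,n) \to \Parks(G,n) \xrightarrow{\beta^*} \Marks(G \wr \Sigma_n)$ coincides with the ring map $\AA(G,n) \hookrightarrow A(G \wr \Sigma_n) \xrightarrow{\chi} \Marks(G \wr \Sigma_n)$, the induced character map $\chi \colon \AA(G,n) \to \Parks(G,n)$ is forced to be multiplicative.

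The only subtle point---and not really an obstacle---is to keep straight which ring structure on $\Parks(G,n)$ is in play: the pointwise product used here, as opposed to the degree-$n$ graded piece of the $\star$-product ring $\Parks(G)$, which is not even a ring structure on its own since $\star$ raises degree. Once this bookkeeping is clear, the corollary is essentially a formal consequence of \ref{prop:pullback}.
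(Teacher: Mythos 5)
Your proof is correct and takes essentially the same approach as the paper: restrict the graded pullback of \ref{prop:pullback} to degree $n$ to get the pullback of abelian groups, then observe that $\chi$, $\beta^*$, and the inclusion $\AA(G,n) \hookrightarrow A(G \wr \Sigma_n)$ are ring maps for the internal (pointwise) products, forcing the induced $\chi \colon \AA(G,n) \to \Parks(G,n)$ to be multiplicative. Your remark about keeping the pointwise product distinct from the $\star$-product is a helpful clarification that the paper leaves implicit.
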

\begin{proof}
    The fact this is a pullback is immediate. 
    The fact that it is a pullback of rings follows from the fact that $\chi \colon A(G \wr \Sigma_n) \to \Marks(G \wr \Sigma_n)$ and $\beta^*\colon \Parks(G,n) \to \Marks(G \wr \Sigma_n)$ are ring homomorphisms.
\end{proof}

\begin{corollary} \label{cor:parksfrommarks}
    The parks homomorphism can be computed in terms of the marks homomorphism.
    Specifically, 
    \[ \chi([(G \wr \Sigma_n)/\alpha(\lambda)])(\kappa) = \chi([(G \wr \Sigma_n)/\alpha(\lambda)])(\alpha(\kappa)) \]
    for $\kappa,\lambda \in \Parts(G,n)$.
    Note that the $\chi$ on the left is the parks homomorphism while the $\chi$ on the right is the marks homomorphism.
\end{corollary}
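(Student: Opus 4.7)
The plan is to deduce this identity immediately from the commutativity of the pullback square in \ref{cor:levelpullback} together with the fact that $\beta$ is a retract of $\alpha$, established in \ref{prop:beta}.

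Concretely, I would start by observing that for any element $x \in \AA(G,n)$ and any conjugacy class $[H] \in \Conj(G \wr \Sigma_n)$, commutativity of the square
\[\begin{tikzcd}
    \AA(G,n) \ar[r] \ar[d, "\chi"'] & A(G \wr \Sigma_n) \ar[d, "\chi"] \\
    \Parks(G,n) \ar[r, "\beta^*"] & \Marks(G \wr \Sigma_n)
\end{tikzcd}\]
yields the identity $\chi(x)(\beta([H])) = \chi(x)([H])$, where $\chi$ on the left is the parks homomorphism and $\chi$ on the right is the marks homomorphism.

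Specializing to $x = [(G \wr \Sigma_n)/\alpha(\lambda)]$ and $[H] = \alpha(\kappa)$, this reads
\[ \chi([(G \wr \Sigma_n)/\alpha(\lambda)])(\beta(\alpha(\kappa))) = \chi([(G \wr \Sigma_n)/\alpha(\lambda)])(\alpha(\kappa)). \]
By \ref{prop:beta}, $\beta \circ \alpha = \id_{\Parts(G)}$, so $\beta(\alpha(\kappa)) = \kappa$, and the left hand side becomes the desired $\chi([(G \wr \Sigma_n)/\alpha(\lambda)])(\kappa)$.

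There is no real obstacle here; the content lies entirely in the previously established pullback square and the retraction property of $\beta$. The only thing worth flagging is the potential for notational confusion between the two maps denoted $\chi$, which is why the corollary statement explicitly distinguishes them.
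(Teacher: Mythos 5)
Your proposal is correct and takes the same approach as the paper: the paper's one-line proof ("This follows from the fact that $\beta\alpha(\kappa)=\kappa$") implicitly invokes the commutativity of the pullback square of \ref{cor:levelpullback} and then applies the retraction identity $\beta\circ\alpha = \id$, exactly as you spell out. Your write-up just makes the implicit commutativity step explicit.
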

\begin{proof}
    This follows from the fact that $\beta\alpha(\kappa)=\kappa$.
\end{proof}

Importantly, we may obtain a formula for the character of the total power operation on the Burnside ring.
A formula similar to this has appeared in the literature in \cite[Section 4.3.4]{Ganter}.
\begin{corollary}
    There are commutative diagrams
    \[\begin{tikzcd}
        A(G) \ar[r, "\P_n"] \ar[d, "\chi"] & \AA(G,n) \ar[r] \ar[d, "\chi"] & A(G \wr \Sigma_n) \ar[d, "\chi"] \\
        \Marks(G) \ar[r, "\P_n"] & \Parks(G,n) \ar[r, "\beta^*"] & \Marks(G \wr \Sigma_n),
    \end{tikzcd}\]
    where $\P_n \colon \Marks(G) \to \Parks(G,n)$ is the power operation of \ref{prop:powopformula}.
    In particular, this provides a formula for $\P_n(f) \in \Marks(G \wr \Sigma_n)$ for $f \in \Marks(G)$:
    \[ \chi(\P_n(f))([H]) = \prod_{[K],m} f([K])^{\beta([H])_{[K],m}} = \prod_{Hi \in H \under [n]} f([H_{ii}]) \]
    for $[H] \in \Conj(G \wr \Sigma_n)$.
\end{corollary}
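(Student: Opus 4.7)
The plan is to verify each of the two squares separately and then extract the explicit formula by chasing around the outer rectangle.

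The right-hand square is exactly the pullback square of \ref{cor:levelpullback}, so its commutativity is already in hand. For the left-hand square, the cleanest argument simply unwinds \ref{def:aachar}: by construction, the parks homomorphism $\chi \colon \AA(G,n) \to \Parks(G,n)$ is the map corresponding under the corepresentability isomorphism $\AA(G) \cong \Pow(A(G))$ of \ref{cor:powang} to the $\Parks(G,n)$-valued family of power operations $A(G) \xrightarrow{\chi} \Marks(G) \xrightarrow{\P_n} \Parks(G,n)$, where the second map is that of \ref{prop:powopformula}. Unwinding this identification is literally the statement that the left square commutes.

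If a hands-on verification is preferred, I would check commutativity of the left square on the polynomial generators $\P_n([G/H])$ of \ref{cor:AAisPoly}. By \ref{prop:transitive power}, $\P_n([G/H]) = [(G \wr \Sigma_n)/\alpha(([H], n))]$, and the explicit formula for $\chi$ on polynomial generators given immediately after \ref{def:aachar} sends this to $\lambda \mapsto \prod_{[K], m} \chi([G/H])([K])^{\lambda_{[K], m}}$, which matches the image of $[G/H]$ along the bottom path by \ref{prop:powopformula}. Since both composites are $\Parks(G,n)$-valued families of power operations on $A(G)$ in the sense of \ref{prop:powrep}, agreement on the transitive basis extends to all of $A(G)$ by the additivity relation.

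Given the commutativity of both squares, the outer rectangle commutes. The formula is then immediate from the definitions of $\beta^*$ and of $\P_n$ on marks:
\[
\beta^*(\P_n(f))([H]) = \P_n(f)(\beta([H])) = \prod_{[K],m} f([K])^{\beta([H])_{[K],m}}.
\]
The second equality in the statement, rewriting this as $\prod_{Hi \in H \under [n]} f([H_{ii}])$, follows by substituting the explicit formula $\beta([H]) = \sum_{Hi \in H \under [n]} ([H_{ii}], |Hi|)$ of \ref{def:beta} and applying the general product identity \eqref{eq:independent} to the composition $c \in C(H \under [n], \Conj(G))$ defined by $c_{Hi} = ([H_{ii}], |Hi|)$ with $uc = \beta([H])$, using the function $(x, m) \mapsto f(x)$ which happens to be independent of $m$.

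There is no real obstacle; the substantive content has been packaged into \ref{def:aachar}, \ref{cor:levelpullback}, and \ref{def:beta}, so the proof is essentially a bookkeeping exercise. If any step requires care, it is articulating the left square explicitly rather than relying on the tautological construction of $\chi$, but the generator-level check above handles it.
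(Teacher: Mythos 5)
Your proof is correct and matches the paper's implicit reasoning. The paper states this corollary without a proof because it is an immediate consequence of the definition of the parks homomorphism (\ref{def:aachar}, via \ref{cor:trivial powopformula}) and the level-wise pullback square of \ref{cor:levelpullback}; your unwinding of those two facts, followed by the substitution of \ref{def:beta} to reindex the product over orbits $Hi$, is exactly the bookkeeping one would do to justify it.
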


\section{Formulas for induced operations on parks} \label{sec:parksformulas}

Let $G$ and $H$ be finite groups. We consider a fixed abelian group homomorphism $F \colon A(H) \to A(G)$. Let $\r F \colon \AA(H) \to \AA(G)$ be the map of \ref{def:Fang}.
Rationalizing, we obtain maps 
\[ F\colon \Marks(H,\Q) \to \Marks(G,\Q) \text{ and } \r F \colon \Parks(H,\Q) \to \Parks(G,\Q). \]
The map on marks can be given a formula using the canonical bases.
In this section, we provide a formula for the map $\r F$ on parks in terms of the canonical bases.

Let $M(F)$ denote the matrix representation of $F$ on marks in the bases of indicator functions.
That is,
\[ F(f)([K]) = \sum_{[L] \in \Conj(H)} M(F)^{[K]}_{[L]} f([L]), \]
where $f \in \Marks(H,\Q)$ and $[K] \in \Conj(G)$ or equivalently
\[ F(\1_{[L]}) = \sum_{[K]} M(F)^{[K]}_{[L]} \1_{[K]}, \]
where $\1_{[L]}$ denotes the indicator function of a conjugacy class $[L]$.

\begin{proposition} \label{prop:transfercoefs}
    Define the quantity $\r M(F)^\kappa_\lambda$ for $\kappa \in \Parts(G)$ and $\lambda \in \Parts(H)$ by the formula
    \[ \r M(F)^\kappa_\lambda = \sum_{\substack{\ell^\num = c(\kappa)^\num \\ \ell \in C(\lambda)}} \prod_i M(F)^{c(\kappa)_i^\dec}_{\ell_i^\dec}. \]
    This is a well-defined formula.
\end{proposition}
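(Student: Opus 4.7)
The plan is to verify that the right-hand side does not depend on any implicit choice. Two choices enter the notation: the composition $c(\kappa) \in C(\kappa)$, and, arguably, $c(\lambda)$. However, $c(\lambda)$ does not actually appear in the formula, since the summation variable $\ell$ ranges over all of $C(\lambda)$. So the only genuine ambiguity is in $c(\kappa)$. I will first dispose of the case $|\kappa| \ne |\lambda|$: there the constraint $\ell^\num = c(\kappa)^\num$ forces $|\ell| = |c(\kappa)| = |\kappa|$, but $\ell \in C(\lambda)$ has $|\ell| = |\lambda|$, so the sum is empty and equals $0$ unambiguously. Thus I may assume $|\kappa|=|\lambda|=k$, and the index $i$ in the product ranges over $[k]$.

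Next I will show invariance under replacing $c(\kappa)$ by any other composition of $\kappa$. Since $C(\kappa)$ is a single $\Sigma_k$-orbit (as recalled in \ref{sec:decoratedcompositions}), it suffices to show invariance under the action of $\sigma \in \Sigma_k$. Writing $c = c(\kappa)$, I want to compare the original sum to the sum obtained from $\sigma c$, where $(\sigma c)_i = c_{\sigma^{-1}(i)}$. The key move is the change of summation variable $\ell = \sigma \ell'$. Under this substitution:
\begin{itemize}
\item[(i)] since $C(\lambda)$ is $\Sigma_k$-stable, $\ell'$ also ranges over $C(\lambda)$;
\item[(ii)] the constraint $\ell^\num = (\sigma c)^\num = \sigma(c^\num)$ is equivalent to $\ell'^\num = c^\num$;
\item[(iii)] reindexing by $j = \sigma^{-1}(i)$ in the (commutative) product,
\[ \prod_i M(F)^{(\sigma c)^\dec_i}_{(\sigma \ell')^\dec_i} = \prod_i M(F)^{c^\dec_{\sigma^{-1}(i)}}_{\ell'^\dec_{\sigma^{-1}(i)}} = \prod_j M(F)^{c^\dec_j}_{\ell'^\dec_j}. \]
\end{itemize}
Putting these together, the sum with $\sigma c$ in place of $c$ matches the original sum term-by-term, so the value is unchanged.

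The argument is essentially pure bookkeeping; there is no substantial obstacle. The only thing to be careful about is keeping the two slots of the $\Sigma_k$-action straight (on $c(\kappa)$ and on $\ell$), and to invoke the fact that $C(\lambda)$ is a $\Sigma_k$-orbit so that the reindexing $\ell \mapsto \sigma \ell'$ is a bijection of the summation domain. Once these conventions are pinned down, the well-definedness is immediate from the commutativity of the finite product.
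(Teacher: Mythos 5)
Your proof is correct and follows essentially the same approach as the paper: both resolve the ambiguity in $c(\kappa)$ by observing that any two compositions of $\kappa$ differ by a permutation in $\Sigma_{|\kappa|}$, and that this permutation can be absorbed into a reindexing of the summation variable $\ell$, leaving the commutative product unchanged. Your version just spells out the change of variable $\ell = \sigma\ell'$ and the preservation of the constraint more explicitly than the paper's one-sentence treatment, which is fine but not a different argument.
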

\begin{proof}
    There are two potential well-definition issues.
    First, the product $\prod_i M(F)^{c(\kappa)_i^\dec}_{\ell_i^\dec}$ with the domain of summation omitted is only defined when $\ell$ and $c(\kappa)$ have the same length (and thus the same set indexing their parts), but this is the case when $\ell^\num = c(\kappa)^\num$.
    Then, one must verify that this quantity is independent of the choice of composition $c(\kappa)$, but this follows from the fact that any two such compositions differ by a permutation in $\Sigma_{|\kappa|}$ which amounts to reordering the factors in $\prod_i M(F)^{c(\kappa)_i^\dec}_{\ell_i^\dec}$ for an $\ell$ differing by the same permutation.
\end{proof}

\begin{theorem} \label{thm:parksform}
    The map $\r F \colon \Parks(H,\Q) \to \Parks(G,\Q)$ is given by
    \[ \r F(f)(\kappa) = \sum_{\lambda \in \Parts(H)} \r M(F)^\kappa_\lambda f(\lambda) = \sum_{\substack{\lambda^\num = \kappa^\num \\ \lambda \in \Parts(H)}} \r M(F)^\kappa_\lambda f(\lambda). \]
    Equivalently, 
    \[ \r F(f)(\kappa) = \sum_{\substack{\ell^\num = c(\kappa)^\num \\ \ell \in C(\Conj(G)) }} f(u\ell) \prod_i M(F)^{c(\kappa)_i^\dec}_{\ell_i^\dec}. \]
\end{theorem}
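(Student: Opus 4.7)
The plan is to define $\Phi_F \colon \Parks(H, \Q) \to \Parks(G, \Q)$ by the proposed formula and show $\r F = \Phi_F$. Both maps are $\Q$-linear and preserve the $\N$-grading (the condition $\ell^\num = c(\kappa)^\num$ in the definition of $\r M(F)^\kappa_\lambda$ forces $\|\lambda\| = \|\kappa\|$), so it is enough to check equality on a $\Q$-spanning set of each graded piece $\Parks(H, n, \Q)$. By \ref{cor:AAisPoly} combined with \ref{cor:trivial powopformula}, the rational parks ring $\Parks(H, \Q) \cong \AA(H) \otimes \Q$ is a polynomial $\Q$-algebra under the $\star$-product generated by the classes $\P_n(\phi_L)$ for $n \geq 1$ and $[L] \in \Conj(H)$, where $\phi_L := \chi_H([H/L])$. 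Since $\r F = \Pow(F)$ is automatically a $\star$-ring map, the problem reduces to (i) verifying that $\Phi_F$ is also a $\star$-ring map and (ii) checking that $\r F$ and $\Phi_F$ agree on each generator $\P_n(\phi_L)$.

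For (ii), transport $\P_n(\phi_L)$ back to $\AA(H)$ via $\chi_H^{-1}$ to recover $\P_n([H/L])$. The defining property of $\r F = \Pow(F)$ and the rational compatibility of $\chi$ with $F$ yield
\[ \r F(\P_n(\phi_L))(\kappa) = \chi_G(\P_n(F[H/L]))(\kappa) = \P_n(F\phi_L)(\kappa) = \prod_i F(\phi_L)(c(\kappa)_i^\dec), \]
using \ref{cor:trivial powopformula} with a chosen composition $c(\kappa) \in C(\kappa)$. Substituting $F(\phi_L)([K]) = \sum_{[L']} M(F)^{[K]}_{[L']} \phi_L([L'])$ and distributing the finite product produces a sum over sequences $([L'_i])_{i \in [|\kappa|]}$. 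I then repackage each such sequence as a decorated composition $\ell$ with $\ell_i = ([L'_i], c(\kappa)_i^\num)$, so that $\ell^\num = c(\kappa)^\num$. Grouping terms by the underlying partition $\lambda = u\ell$ and invoking equation \eqref{eq:independent} to replace $\prod_i \phi_L(\ell_i^\dec)$ by $\prod_{[L'], m} \phi_L([L'])^{\lambda_{[L'], m}} = \P_n(\phi_L)(\lambda)$ — a quantity depending only on $\lambda$ — the expression factors as $\sum_\lambda \r M(F)^\kappa_\lambda \P_n(\phi_L)(\lambda) = \Phi_F(\P_n(\phi_L))(\kappa)$, as required.

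For (i), which is the main technical obstacle, multiplicativity of $\Phi_F$ reduces to the combinatorial identity
\[ \r M(F)^\mu_{\nu_1 + \nu_2} \frac{(\nu_1+\nu_2)!}{\nu_1!\nu_2!} = \sum_{\mu_1 + \mu_2 = \mu} \frac{\mu!}{\mu_1!\mu_2!} \r M(F)^{\mu_1}_{\nu_1} \r M(F)^{\mu_2}_{\nu_2}. \]
The argument is a bijective count: both sides enumerate triples $(\ell, I_1, I_2)$ with $I_1 \cupdot I_2 = [|\mu|]$ and $\ell \in C(\nu_1+\nu_2)$ indexed by $[|\mu|]$, subject to $\ell^\num = c(\mu)^\num$ and $u(\ell|_{I_s}) = \nu_s$ for $s = 1, 2$, each weighted by $\prod_i M(F)^{c(\mu)_i^\dec}_{\ell_i^\dec}$. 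Summing over $\ell$ first produces the LHS, with the multinomial $(\nu_1+\nu_2)!/\nu_1!\nu_2!$ counting the valid splittings of each $\ell$; summing over $(I_1, I_2)$ first (which fixes $\mu_s := u(c(\mu)|_{I_s})$ and leaves, inside, independent compositions $\ell|_{I_s} \in C(\nu_s)$ with the required numerical sequences) produces the RHS. Combined with (ii), this establishes $\r F = \Phi_F$.
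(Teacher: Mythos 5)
Your proposal is correct and is essentially the same argument as the paper: both reduce to showing that the proposed formula $\Phi_F$ is a $\star$-ring map and agrees with $\r F$ after composing with the power operations (the paper invokes \ref{prop:powcorep} directly; you invoke the polynomial-generator description of \ref{cor:AAisPoly}, which is equivalent). Your step (ii) is the paper's ``compatibility with power operations'' computation specialized to $f = \chi_H([H/L])$, and your step (i) extracts the paper's ``compatibility with the transfer product'' computation into a coefficient identity $\tfrac{(\nu_1+\nu_2)!}{\nu_1!\nu_2!}\r M(F)^\mu_{\nu_1+\nu_2} = \sum_{\mu_1+\mu_2=\mu}\tfrac{\mu!}{\mu_1!\mu_2!}\r M(F)^{\mu_1}_{\nu_1}\r M(F)^{\mu_2}_{\nu_2}$ proved bijectively — a cleaner statement, but the same enumeration the paper does implicitly via \ref{prop:compprod} and the bijection $C(I,\Conj(H)) \times C(J,\Conj(H)) \cong C([|\mu|],\Conj(H))$.
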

\begin{proof}
By \ref{prop:powcorep}, to verify the correctness of this formula, it suffices to check that it is compatible with the power operations and the transfer product.
First, we check compatibility with the power operations.
For $f \in \Marks(H,\Q)$ and $\kappa \in \Parts(G,n)$, we compute that
\begin{align*}
    \P_n(F(f))(\kappa) 
    &= \prod_{\substack{[K] \in \Conj(G) \\ m \in \N_+}} F(f)([K])^{\kappa_{[K],m}}
    = \prod_i F(f)(c(\kappa)_i^\dec) 
    \\&= \prod_i \sum_{\ell_i \in \Conj(H)} M(F)^{c(\kappa)_i^\dec}_{\ell_i} f(\ell_i)
    \\&= \sum_{\substack{\ell^\num = c(\kappa)^\num \\ \ell \in C(\Conj(H))}} \prod_i M(F)^{c(\kappa)_i^\dec}_{\ell_i^\dec} f(\ell_i^\dec)
    \\&= \sum_{\substack{\ell^\num = c(\kappa)^\num \\ \ell \in C(\Conj(H))}} \P_n(f)(u\ell) \prod_i M(F)^{c(\kappa)_i^\dec}_{\ell_i^\dec} 
    \\&= \sum_{\lambda \in \Parts(H)} \P_n(f)(\lambda) \sum_{\substack{\ell^\num = c(\kappa)^\num \\ \ell \in C(\lambda)}} \prod_i M(F)^{c(\kappa)_i^\dec}_{\ell_i^\dec}
    \\&= \sum_{\lambda \in \Parts(H)} \r M(F)^\kappa_\lambda \P_n(f)(\lambda) = \r F(\P_n(f))(\kappa).
\end{align*}
The nontrivial equality on the first line translates between partitions and composition as in \eqref{eq:independent}.
The equality between the second and third lines follows from the fact that a choice $\ell_i \in \Conj(H)$ for each $i$ which indexes $c(\kappa)$ is the same data as choosing a decoration from $\Conj(H)$ for each of the parts of $c(\kappa)^\num$.
The equality between the fourth and fifth lines follows from the fact that summing over compositions is the same as first summing over partitions and then the compositions of those partitions.
The other equalities are applying definitions.

Next, we check compatibility with the transfer product.
For $f,g \in \Parks(H,\Q)$ and $\mu \in \Parts(G)$, we compute that
\begin{align*}
    (\r F(f)\star \r F(g))(\mu)
    &= \sum_{\substack{I \cupdot J = [|\mu|]}} \r F(f)(uc(\mu)|_I) \r F(g)(uc(\mu)|_J)
    \\&= \sum_{\substack{I \cupdot J = [|\mu|]}} \sum_{\substack{k^\num = c(\mu)^\num|_I \\ \ell^\num = c(\mu)^\num|_J \\ k \in C(I,\Conj(H)) \\ \ell \in C(J,\Conj(H))}} f(uk)g(u\ell) \prod_{i \in I} M(F)^{c(\mu)_i^\dec}_{k^\dec} \prod_{j \in J} M(F)^{c(\mu)_j^\dec}_{\lambda_j^\dec}
    \\&= \sum_{\substack{I \cupdot J = [|\mu|]}} \sum_{\substack{\ell^\num = c(\mu)^\num \\ \ell \in C(\Conj(H))}} f(u\ell|_I)g(u\ell|_J) \prod_i M(F)^{c(\mu)_i^\dec}_{\ell_i^\dec}
    \\&= \sum_{\substack{\ell^\num = c(\mu)^\num \\ \ell \in C(\Conj(H))}} \sum_{\substack{I \cupdot J = [|u\ell|]}} f(u\ell|_I)g(u\ell|_J) \prod_i M(F)^{c(\mu)_i^\dec}_{\ell_i^\dec}
    \\&= \sum_{\substack{\ell^\num = c(\mu)^\num \\ \ell \in C(\Conj(H))}} (f \star g)(u\ell) \prod_i M(F)^{c(\mu)_i^\dec}_{\ell_i^\dec} 
    = \r F(f \star g)(\mu).
\end{align*}
The equality of the first line comes from \ref{prop:compprod}.
The equality between the second and third line follows from the bijection $C(I,\Conj(H)) \times C(J,\Conj(H)) \cong C([|\mu|], \Conj(H))$.
The other equalities follow from manipulating definitions and summation order.
\end{proof}

\begin{corollary}
    Let $\lambda \in \Parts(H)$ and let $\1_\lambda \in \Parks(H,\Q)$ denote the indicator function of $\lambda$.
    Then,
    \[ \r F(\1_\lambda) = \sum_{\kappa \in \Parks(G)} \r M(F)^\kappa_\lambda \1_\kappa \]
    so that
    \[ \r F(\1_\lambda)(\kappa) = \r M(F)^\kappa_\lambda = \sum_{\substack{\ell^\num = c(\kappa)^\num \\ \ell \in C(\lambda)}} \prod_i M(F)^{c(\kappa)_i^\dec}_{\ell_i^\dec}. \]
\end{corollary}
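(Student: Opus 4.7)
The plan is to read this off directly from \ref{thm:parksform} applied to the indicator function $f = \1_\lambda$, followed by unwinding the definition of $\r M(F)^\kappa_\lambda$ from \ref{prop:transfercoefs}.

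First, I would substitute $f = \1_\lambda$ into the formula
\[ \r F(f)(\kappa) = \sum_{\lambda' \in \Parts(H)} \r M(F)^\kappa_{\lambda'} f(\lambda') \]
provided by \ref{thm:parksform}. Since $\1_\lambda(\lambda') = 1$ when $\lambda' = \lambda$ and $0$ otherwise, the sum collapses to the single term $\r F(\1_\lambda)(\kappa) = \r M(F)^\kappa_\lambda$. To obtain the first displayed equation, I would expand $\r F(\1_\lambda) \in \Parks(G,\Q)$ in the basis of indicator functions $\{\1_\kappa\}_{\kappa \in \Parts(G)}$; by definition, the coefficient of $\1_\kappa$ in this expansion is $\r F(\1_\lambda)(\kappa)$, which we have just identified with $\r M(F)^\kappa_\lambda$.

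The final equality $\r M(F)^\kappa_\lambda = \sum_{\substack{\ell^\num = c(\kappa)^\num \\ \ell \in C(\lambda)}} \prod_i M(F)^{c(\kappa)_i^\dec}_{\ell_i^\dec}$ is simply the defining formula from \ref{prop:transfercoefs}. There is no real obstacle here; the corollary is a direct extraction from \ref{thm:parksform}, with the main purpose being to make explicit that the matrix entries $\r M(F)^\kappa_\lambda$ are literally the coefficients of $\r F$ in the canonical bases of indicator functions on $\Parts(H)$ and $\Parts(G)$, justifying the notation $\r M(F)$.
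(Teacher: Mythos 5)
Your argument is correct and is exactly the intended (and essentially only) route: the corollary is stated in the paper without proof as an immediate consequence of \ref{thm:parksform}, and your substitution of $f = \1_\lambda$, collapse of the sum, and re-reading of the coefficients as the basis expansion is precisely what is implicit there. The final equality is, as you say, just the definition from \ref{prop:transfercoefs}.
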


\begin{corollary}
    If the matrix components $M(F)^{[K]}_{[L]}$ are elements of a commutative ring $R$, then the formula of \ref{thm:parksform} provides a commutative graded ring map
    \[ \r F \colon \Parks(H,R) \to \Parks(G,R) \]
    which fits into a commutative square
    \[\begin{tikzcd}
        \AA(H) \ar[r, "\r F"] \ar[d, "\chi"'] & \AA(G) \ar[d, "\chi"] \\
        \Parks(H,R) \ar[r, "\r F"] & \Parks(G,R)
    \end{tikzcd}\]
    witnessing compatibility with the character map.
\end{corollary}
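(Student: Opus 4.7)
The plan is to lift the arguments of \ref{thm:parksform} from $\Q$ to $R$ by observing that they are purely combinatorial. First I would note that the formula in \ref{thm:parksform} involves only finite sums and products of elements of $R$ and of values of $R$-valued functions, so it defines a graded $R$-linear map $\r F \colon \Parks(H,R) \to \Parks(G,R)$.

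To verify that $\r F$ is a graded ring map, it suffices to check preservation of the unit $\1_0$ and of the transfer product $\star$. The unit is preserved because plugging $f = \1_0$ into the formula forces the sum over $\ell$ to collapse onto the empty composition, yielding $\1_0$ back. For $\star$-preservation, I would repeat the second computation in the proof of \ref{thm:parksform} verbatim over $R$; every step there (reindexing finite sums, applying \ref{prop:compprod}, and using the bijection $C(I,\Conj(H))\times C(J,\Conj(H)) \cong C([|\mu|],\Conj(H))$) is insensitive to the coefficient ring.

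For the commutative square, I would first observe that both $\chi \circ \r F$ and $\r F \circ \chi$ are graded ring homomorphisms $\AA(H) \to \Parks(G,R)$: the character $\chi$ is a graded ring map by \ref{prop:pullback}, the map $\r F$ on $\AA$ is one by functoriality of $\Pow$, and $\r F$ on parks is one by the previous paragraph. By \ref{cor:AAisPoly}, $\AA(H)$ is a polynomial ring on the generators $\P_n([H/K])$, so it suffices to check the square on these generators. Functoriality of $\Pow$ gives $\r F(\P_n([H/K])) = \P_n(F([H/K]))$ in $\AA(G)$, so by \ref{def:aachar} and \ref{cor:trivial powopformula} the $\chi \circ \r F$ leg becomes the parks power operation $\P_n(\chi(F([H/K])))$. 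On the other leg, $\chi(\P_n([H/K]))$ is $\P_n(\chi([H/K]))$, so equality of the two legs reduces to the identity $\r F(\P_n(f)) = \P_n(F(f))$ for $f = \chi([H/K])$, which is the first computation in the proof of \ref{thm:parksform} and also transfers verbatim to $R$. The main bookkeeping obstacle will be verifying $F(\chi([H/K])) = \chi(F([H/K]))$ in $\Marks(G,R)$; this follows from the fact that both are elements of $\Marks(G,\Z)$ that already agree there by the rational square defining $M(F)$.
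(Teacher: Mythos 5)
The paper states this corollary without proof; it is left as an observation following \ref{thm:parksform}, and your argument fills that gap correctly along the lines the paper clearly intends. The three observations you make are exactly the right ones: the formula is a finite $R$-linear combination of finite products of $M(F)$-entries and function values, hence defines a map $\Parks(H,R) \to \Parks(G,R)$; the $\star$-preservation and $\P_n$-compatibility computations in the proof of \ref{thm:parksform} are manipulations of finite sums and products over an unspecified commutative ring and so go through over $R$ unchanged; and commutativity of the square may be checked on the polynomial generators $\P_n([H/K])$ of \ref{cor:AAisPoly}, where it unwinds via $\Pow(F)(P_n(a)) = P_n(F(a))$ and \ref{def:aachar} to the $\P_n$-compatibility you already verified.

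One small point worth making precise (for both you and the statement itself): ``$M(F)^{[K]}_{[L]}$ are elements of $R$'' should be read as ``$M(F)$ has entries in a subring $R_0 \subseteq \Q$, and $R$ is an $R_0$-algebra,'' so that the entries acquire unambiguous meaning in $R$ via the structure map $R_0 \to R$. Your last sentence about the ``bookkeeping obstacle'' is then fine once phrased this way: the identity $F(\chi_H([H/K])) = \chi_G(F([H/K]))$ holds in $\Marks(G,\Q)$ by the definition of $M(F)$, both sides in fact lie in $\Marks(G,\Z) \subseteq \Marks(G,R_0)$, and applying $R_0 \to R$ gives the equality in $\Marks(G,R)$. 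With that clarification, your proof is complete.
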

In particular, the results of \ref{sec:parksgentr}, \ref{sec:parksspetr}, and \ref{sec:parksres} apply to an arbitrary coefficient ring $R$ so long as the coefficients of $M(F)$ are elements of $R$.

\section{A general formula for transfers on parks} \label{sec:parksgentr}

In this section, we analyze the specific case of \ref{thm:parksform} when the map $F \colon A(H) \to A(G)$ is a transfer along a (not necessarily injective) group homomorphism.
We only rely on one fact about transfers along group homomorphisms: that certain values $M(F)^{[K]}_{[L]}$ are zero.
Thus, we start by proving a generalization of the result for transfers along group homomorphisms, and then conclude the result about group homomorphisms as a corollary. We continue to use the notation of the previous section. 

\begin{proposition} \label{prop:parksgentr}
    Let $F \colon A(H) \to A(G)$ be a map of abelian groups.
    Suppose that for fixed $[L]$, there is at most one $[K]$ such that $M(F)^{[K]}_{[L]}$ is nonzero.
    Equivalently, suppose there is a function $\psi \colon \Conj(H) \to \Conj(G)$ with the property that $M(F)^{[K]}_{[L]}$ is nonzero only if $\psi [L] = [K]$.
    Write $M(F)_{[L]}$ for $M(F)^{\psi[L]}_{[L]}$ so that
    \[ F(f)([K]) = \sum_{[L] \in \psi^{-1}[K]} M(F)_{[L]} f([L]) \]
    for $f \in \Marks(H,\Q)$ and $[K] \in \Conj(G)$.
    Define coefficients 
    \[ \r M(F)_\lambda = \frac{(\psi_*\lambda)!}{\lambda!}\prod_{[L],m} (M(F)_{[L]})^{\lambda_{[L],m}} \]
    for $\lambda \in \Parts(H)$.
    Then, $\r F \colon \Parks(H,\Q) \to \Parks(G,\Q)$ has the following formula:
    \[ \r F(f)(\kappa) = \sum_{\lambda \in \psi_*^{-1}\kappa} \r M(F)_\lambda f(\lambda) \]
    for $f \in \Parks(H,\Q)$ and $\kappa \in \Parts(G)$, where $\psi_*^{-1}$ denotes the preimage under $\psi_* \colon \Parts(H) \to \Parts(G)$. 
    Equivalently,
    \[ \r F(f)(\kappa) = \sum_{\lambda \in \psi_*^{-1}\kappa} \frac{\kappa!}{\lambda!} f(\lambda) \prod_{[L],m} (M(F)_{[L]})^{\lambda_{[L],m}}. \] 
    Note that the quantities $\kappa!/\lambda! = \psi_*\lambda!/\lambda!$ are integral.
\end{proposition}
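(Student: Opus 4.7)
The plan is to specialize \ref{thm:parksform} under the given vanishing hypothesis. Since $M(F)^{[K]}_{[L]}$ is zero unless $\psi[L] = [K]$, the product $\prod_i M(F)^{c(\kappa)_i^\dec}_{\ell_i^\dec}$ appearing in the composition-indexed formula of \ref{thm:parksform} vanishes unless $\psi(\ell_i^\dec) = c(\kappa)_i^\dec$ for every $i$, that is, unless $\psi_*\ell = c(\kappa)$ in $C(\Conj(G))$. So the sum reduces to one over $\ell \in C(\Conj(H))$ with $\psi_*\ell = c(\kappa)$, and on this subset each factor simplifies to $M(F)_{\ell_i^\dec}$.

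Next, reparametrize by grouping compositions $\ell$ according to their un-indexing $\lambda = u\ell \in \Parts(H)$. The constraint $\psi_*\ell = c(\kappa)$ forces $\psi_*\lambda = \kappa$, so $\lambda$ ranges over $\psi_*^{-1}\kappa$. For each such $\lambda$, both $f(u\ell) = f(\lambda)$ and the product $\prod_i M(F)_{\ell_i^\dec} = \prod_{[L],m}(M(F)_{[L]})^{\lambda_{[L],m}}$ are constant on the fiber $\{\ell \in C(\lambda) : \psi_*\ell = c(\kappa)\}$, since the latter depends only on the multiset of parts. Thus the sum collapses to
\[ \r F(f)(\kappa) = \sum_{\lambda \in \psi_*^{-1}\kappa} \bigl|\{\ell \in C(\lambda) : \psi_*\ell = c(\kappa)\}\bigr| \cdot f(\lambda) \prod_{[L],m}(M(F)_{[L]})^{\lambda_{[L],m}}, \]
and it remains to compute the fiber size.

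The main step, and the only place where a genuine combinatorial argument is needed, is the count of this fiber. Since $|\lambda| = |\kappa|$ and $\psi_* \colon C(\lambda) \to C(\kappa)$ is $\Sigma_{|\lambda|}$-equivariant and surjective, all fibers have common cardinality $|C(\lambda)|/|C(\kappa)| = (|\lambda|!/\lambda!)/(|\kappa|!/\kappa!) = \kappa!/\lambda!$. A more direct count proceeds by noting that an $\ell$ in the fiber over $c(\kappa)$ is determined by, for each part of $c(\kappa)$ of shape $([K],m)$, a choice of $[L] \in \psi^{-1}[K]$ for the corresponding decoration of $\ell$, subject to the multiplicities summing to the prescribed values $\lambda_{[L],m}$; the resulting multinomial count gives exactly $\prod_{[K],m} \kappa_{[K],m}!/\prod_{[L] \in \psi^{-1}[K]}\lambda_{[L],m}! = \kappa!/\lambda!$. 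Substituting this into the displayed sum yields the second claimed formula, and since $\kappa = \psi_*\lambda$, the coefficient $\kappa!/\lambda! = (\psi_*\lambda)!/\lambda!$ matches the definition of $\r M(F)_\lambda$, giving the first formula as well. The integrality of $\kappa!/\lambda!$ is a byproduct of its interpretation as a cardinality.
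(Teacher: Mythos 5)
Your proposal is correct and follows the same route as the paper: specialize the composition-indexed formula of \ref{thm:parksform}, use the vanishing hypothesis to restrict to compositions $\ell$ with $\psi_*\ell = c(\kappa)$, observe that $f(u\ell)$ and the product of coefficients depend only on $u\ell = \lambda$, and compute the size of each fiber to be $\kappa!/\lambda!$. The only cosmetic difference is in how that last count is justified --- the paper uses the index $[\Sigma_{c(\kappa)} : \Sigma_{c(\lambda)}]$, while you use the equivalent orbit-size quotient $|C(\lambda)|/|C(\kappa)|$ together with a direct multinomial verification.
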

\begin{proof}
    To prove this claim, it suffices to study the coefficients 
    \[ \r M(F)^\kappa_\lambda = \sum_{\substack{\ell^\num = c(\kappa)^\num \\ \ell \in C(\lambda)}} \prod_i M(F)^{c(\kappa)_i^\dec}_{\ell_i^\dec} \] 
    used in \ref{thm:parksform}.
    First, note that if $\psi_*\ell \neq c(\kappa)$, the product $\prod_i M(F)^{c(\kappa)_i^\dec}_{\ell_i^\dec}$ will be zero since at least one of the values $M(F)^{c(\kappa)_i^\dec}_{\ell_i^\dec}$ will be zero.
    Further, in the case that $\psi_*\ell = c(\kappa)$, this product is simply $\prod_i M(F)_{\ell_i^\dec}$.
    Thus,
    \[ \r M(F)^\kappa_\lambda = \sum_{\substack{\psi_*\ell = c(\kappa) \\ \ell \in C(\lambda)}} \prod_i M(F)_{\ell_i^\dec} = \left| \{ \ell \in C(\lambda) \colon \psi_*\ell = c(\kappa) \} \right| \prod_{[L],m} (M(F)_{[L]})^{\lambda_{[L],m}}. \] 
    Thus, it suffices to enumerate those $\ell \in C(\lambda)$ such that $\psi_*\ell = c(\kappa)$.
    There exists such an $\ell$ if and only if $\psi_*\lambda = \kappa$.
    In such a case, choose $c(\lambda)$ such that $\psi_*c(\lambda) = c(\kappa)$.
    Any other choice of $\ell$ differs from $c(\lambda)$ by permutation in $\Sigma_{|\lambda|}$ which fixes $c(\kappa)$ i.e.\ a permutation in $\Sigma_{c(\kappa)}$.
    Two such permutations give the same choice of $\ell$ whenever $\sigma \in \Sigma_{c(\lambda)}$.
    Thus,
    \begin{equation} \label{eq:compcounting}
        \left| \{ \ell \in C(\lambda) \colon \psi_*\ell = c(\kappa) \} \right| = [\Sigma_{c(\kappa)} \colon \Sigma_{c(\lambda)}] = \frac{\kappa!}{\lambda!}.
    \end{equation}
    The result now follows.
\end{proof}

\begin{corollary}
    On indicator functions,
    \[ \r F(\1_\lambda) = \r M(F)_\lambda \1_{\psi_* \lambda} = \frac{(\psi_*\lambda)!}{\lambda!} \1_{\psi_* \lambda} \prod_{[L],m} (M(F)_{[L]})^{\lambda_{[L],m}} \]
    for $\lambda \in \Parts(H)$.
\end{corollary}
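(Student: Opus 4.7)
The plan is to apply the formula from \ref{prop:parksgentr} directly to the indicator function $\1_\lambda$ and simplify. This corollary is essentially a specialization of the preceding proposition; the main task is unpacking which summand survives.

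Concretely, I would fix $\kappa \in \Parts(G)$ and compute $\r F(\1_\lambda)(\kappa)$ using the formula
\[ \r F(f)(\kappa) = \sum_{\mu \in \psi_*^{-1}\kappa} \r M(F)_\mu f(\mu). \]
Setting $f = \1_\lambda$, the summand $\1_\lambda(\mu)$ vanishes unless $\mu = \lambda$, and in that case equals $1$. Therefore the sum reduces to a single term, which contributes if and only if $\lambda \in \psi_*^{-1}\kappa$, i.e., if and only if $\kappa = \psi_*\lambda$. So
\[ \r F(\1_\lambda)(\kappa) = \begin{cases} \r M(F)_\lambda & \text{if } \kappa = \psi_*\lambda, \\ 0 & \text{otherwise,} \end{cases} \]
which is precisely $\r M(F)_\lambda \1_{\psi_*\lambda}(\kappa)$. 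Substituting the explicit formula $\r M(F)_\lambda = \frac{(\psi_*\lambda)!}{\lambda!} \prod_{[L],m} (M(F)_{[L]})^{\lambda_{[L],m}}$ from \ref{prop:parksgentr} then yields the second equality in the statement.

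There is no real obstacle here: the corollary is a direct evaluation of the previous proposition on a basis element, and both the vanishing of off-basis indicator values and the form of $\r M(F)_\lambda$ have already been established. The only thing to verify, which is immediate from the definitions, is that $\1_{\psi_*\lambda}$ is supported exactly on the partition $\psi_*\lambda \in \Parts(G)$, matching the unique surviving term in the sum.
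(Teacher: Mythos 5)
Your proof is correct and matches the paper's implicit derivation: the corollary is stated without proof precisely because it follows immediately by evaluating the formula of \ref{prop:parksgentr} at $f = \1_\lambda$, isolating the single surviving summand $\mu = \lambda$, and substituting the definition of $\r M(F)_\lambda$.
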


Let $\phi\colon H \to G$ be a homomorphism of finite groups. The map $\Tr_\phi \colon A(H) \to A(G)$ is of the form required for \ref{prop:parksgentr}.
The map $\psi$ of the proposition is the induced map $\phi \colon \Conj(H) \to \Conj(G)$.
That is, the transfer map on marks is determined by values $M(\Tr_\phi)_{[L]} \in \Q$ for $[L] \in \Conj(H)$ such that
\[ \Tr_\phi(f)([K]) = \sum_{[L] \in \phi^{-1}([K])} M(\Tr_\phi)_{[L]} f([L]), \]
where $\phi^{-1}$ denotes the preimage under $\phi \colon \Conj(H) \to \Conj(G)$.
For arbitrary $\phi$, it is difficult to give an elegant formula for the coefficients $M(\Tr_\phi)_{[L]}$, or even for the transfer map on marks.
A formula for the coefficients $M(\Tr_\phi)_{[L]}$ in terms of M\"oebius inversion can be extracted from the results of Bouc in \cite[Chapter 5]{Bouc2010}.

\begin{corollary} \label{cor:parkstr}
    Let $\phi \colon H \to G$ be a homomorphism of finite groups.
    Then, the transfer map on parks $\Tr_\phi \colon \Parks(H,\Q) \to \Parks(G,\Q)$
    has the following formula:
    \[ \Tr_\phi(f)(\kappa) = \sum_{\lambda \in \phi_*^{-1}\kappa} \r M(\Tr_\phi)_\lambda f(\lambda) \]
    for $f \in \Parks(H,\Q)$ and $\kappa \in \Parts(G)$. 
    On indicator functions,
    \[ \Tr_\phi(\1_\lambda) = \r M(\Tr_\phi)_\lambda \1_{\phi_* \lambda} \]
    for $\lambda \in \Parts(H)$.
\end{corollary}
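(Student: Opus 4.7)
The plan is to invoke \ref{prop:parksgentr} with $F = \Tr_\phi$ and $\psi \colon \Conj(H) \to \Conj(G)$ the map induced by $\phi$ from \ref{subsec:marks res}; both displayed formulas then fall out by direct substitution, since \ref{prop:parksgentr} already packages exactly these identities in full generality. The entire proof therefore collapses to verifying the single hypothesis of that proposition: that $M(\Tr_\phi)^{[K]}_{[L]}$ vanishes unless $[K] = \phi([L])$.

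To establish this sparsity, I would appeal to Frobenius reciprocity. The transfer $\Tr_\phi \colon A(H) \to A(G)$ is an $A(G)$-module map under the $\Res_\phi$-action (\ref{subsec:res and tr}), and transporting rationally through the marks isomorphism yields
\[ \Tr_\phi(f \cdot \Res_\phi g) = \Tr_\phi(f) \cdot g \]
for $f \in \Marks(H,\Q)$ and $g \in \Marks(G,\Q)$. Specializing to $f = \1_{[L]}$ and $g = \1_{[K]}$, and using the restriction formula from \ref{subsec:marks res} to expand
\[ \Res_\phi(\1_{[K]}) = \sum_{[K'] \in \phi^{-1}[K]} \1_{[K']}, \]
orthogonality of the indicator functions gives $\1_{[L]} \cdot \Res_\phi(\1_{[K]}) = \delta_{\phi([L]),[K]} \1_{[L]}$. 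Pulling $\Tr_\phi$ through this identity then shows that $\Tr_\phi(\1_{[L]}) \cdot \1_{[K]}$ vanishes whenever $[K] \neq \phi([L])$; since multiplication by $\1_{[K]}$ in $\Marks(G,\Q)$ is projection onto the $[K]$-component, $\Tr_\phi(\1_{[L]})$ must be supported only at $\phi([L])$, as required.

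The main subtlety is precisely this sparsity claim: a direct linear-algebra argument using only the triangular structure of the marks matrix establishes that $M(\Tr_\phi)^{[K]}_{[L]} = 0$ for $[K] \not\leq_G \phi([L])$, but the further equality $[K] = \phi([L])$ requires substantial cancellation among the contributions from strictly smaller conjugacy classes. The Frobenius reciprocity argument bypasses this cancellation in a single step by producing a projection identity directly in $\Marks(G,\Q)$. With the hypothesis verified, \ref{prop:parksgentr} immediately delivers the formulas for $\Tr_\phi(f)(\kappa)$ and for $\Tr_\phi(\1_\lambda)$ stated in the corollary.
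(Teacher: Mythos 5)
Your proposal is correct and follows the same route as the paper, namely a direct invocation of \ref{prop:parksgentr} with $F=\Tr_\phi$ and $\psi$ the induced map $\phi\colon\Conj(H)\to\Conj(G)$. The paper simply asserts that $\Tr_\phi$ satisfies the sparsity hypothesis of that proposition (citing Bouc for the coefficients), whereas you supply a clean verification of it via the projection formula $\Tr_\phi(f\cdot\Res_\phi g)=\Tr_\phi(f)\cdot g$ on marks; that extra step is sound and is a nice clarification of why the coefficient matrix is supported on the graph of $\phi\colon\Conj(H)\to\Conj(G)$.
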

\begin{proof}
    This follows immediately from \ref{prop:parksgentr} using $F=\Tr_\phi$ and $\psi = \phi \colon \Conj(H) \to \Conj(G)$.
\end{proof}

\section{Special cases of the transfer formula} \label{sec:parksspetr}

Although it is difficult to give an elegant formula for the transfer map on marks for an arbitrary homomorphism $\phi \colon H \to G$, some homomorphisms have a convenient transfer formula.
In this section, we seek to provide alternative formulas for \ref{cor:parkstr} in such cases.
In particular, we consider subgroup inclusions and the map $H \to e$.

We begin by discussing subgroup inclusions.
Recall the classical formula for the transfer on marks along a subgroup inclusion
\begin{equation} \label{eq:marksinctr}
    \Tr_{H \subseteq G}(f)([K]) = \sum_{\substack{gH \in G/H \\ K^g \subseteq H}} f([K^g]).
\end{equation}
In particular, this involves choosing a representative for the conjugacy class and the conjugation action on subgroups.
We seek to reproduce these ideas on compositions.

Note that if $G$ acts on $X$ (say, on the right), then $G^{I}$ acts on $C(I,X)$ by acting on decorations.
Let $q \colon X \to X/G$ denote the orbit map.
Then, $q_* \colon C(I,X) \to C(I,X/G)$ induces to an isomorphism $C(I,X)/G^{I} \to C(I,X/G)$.
Of particular utility is the conjugation action of $G$ on $\Sub(G)$, where $\Sub(G)$ denotes the set of subgroups of $G$. Recall the map $u$ of \ref{sec:decoratedcompositions} that associates to a decorated composition its underlying decorated partition.

\begin{proposition} \label{prop:parksinctr}
    Let $\phi$ be a subgroup inclusion $H \subseteq G$.
    Let $f \in \Parks(H)$ and $\kappa \in \Parts(G)$.
    Choose a composition $k \in q_*^{-1}C(\kappa) \subseteq C(\Sub(G))$ and let $c(\kappa)=q_*k$.
    The composition $k$ amounts to a choice of ordering of the parts of $\kappa$, along with a choice of representative for each of the conjugacy classes that decorate the parts.   

    Then, $\Tr_{H \subseteq G} \colon \Parks(H,\Q) \to \Parks(G,\Q)$ is given by
    \[ \Tr_{H \subseteq G}(f)(\kappa) = \sum_{\substack{gH^{\times |\kappa|} \in G^{\times |\kappa|}/H^{\times |\kappa|} \\ k^g \in C(|\kappa|, \Sub(H))}} f(uq_*k^g), \]
    where $C(|\kappa|, \Sub(H)) \subseteq C(|\kappa|, \Sub(G))$.
\end{proposition}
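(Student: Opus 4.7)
The plan is to deduce the formula from Corollary \ref{cor:parkstr} by unpacking the matrix coefficients $\r M(\Tr_\phi)_\lambda$ in terms of the composition $k$ and rearranging.

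First, I would extract the marks coefficients $M(\Tr_{H\subseteq G})_{[L]}$ from the classical formula \eqref{eq:marksinctr}. Fixing any representative $K$ of $[K]=\phi_*[L]$ and grouping the sum $\sum_{gH,\,K^g\subseteq H} f([K^g])$ by the $H$-conjugacy class of $K^g$, one reads off
\[ M(\Tr_{H\subseteq G})_{[L]} = |\{gH \in G/H : K^g \in L\}|, \]
where $L\subseteq \Sub(H)$ denotes the $H$-conjugacy class $[L]$. A direct computation shows this count is independent of the representative $K$ (replacing $K$ by $K^{g_0}$ just relabels the cosets by $g_0^{-1}g$).

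Next, I would expand the right hand side of the proposition by grouping cosets according to the output composition. For each $i \in [|\kappa|]$ the $i$-th factor $k_i \in \Sub(G)$ is a representative of $c(\kappa)_i^\dec$, and $g_i$ is being chosen mod $H$ so that $k_i^{g_i}\in \Sub(H)$. Grouping by $\ell = q_*k^g \in C(|\kappa|,\Conj(H))$, the count of cosets $gH^{\times|\kappa|}$ producing a given $\ell$ factors over $i$, each factor being $M(\Tr_\phi)_{\ell_i^\dec}$ by Step 1 applied with representative $k_i$. Noting that the admissible $\ell$ are exactly those with $\phi_*\ell = c(\kappa)$, this rewrites the right hand side as
\[ \sum_{\substack{\ell \in C(|\kappa|,\Conj(H)) \\ \phi_*\ell = c(\kappa)}} f(u\ell) \prod_i M(\Tr_\phi)_{\ell_i^\dec}. \]

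Then I would collect terms by the partition $\lambda = u\ell$. For fixed $\lambda$ with $\phi_*\lambda = \kappa$, the product $\prod_i M(\Tr_\phi)_{\ell_i^\dec}$ depends only on $\lambda$ and equals $\prod_{[L],m}(M(\Tr_\phi)_{[L]})^{\lambda_{[L],m}}$, while the number of $\ell \in C(\lambda)$ satisfying $\phi_*\ell = c(\kappa)$ is $\kappa!/\lambda!$, by precisely the orbit-counting argument given in \eqref{eq:compcounting} in the proof of Proposition \ref{prop:parksgentr}. Substituting, the right hand side becomes
\[ \sum_{\lambda \in \phi_*^{-1}\kappa} \frac{\kappa!}{\lambda!}\, f(\lambda) \prod_{[L],m} (M(\Tr_\phi)_{[L]})^{\lambda_{[L],m}} = \sum_{\lambda \in \phi_*^{-1}\kappa} \r M(\Tr_\phi)_\lambda\, f(\lambda), \]
which equals $\Tr_{H\subseteq G}(f)(\kappa)$ by Corollary \ref{cor:parkstr}.

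The main bookkeeping obstacle is confirming independence from the auxiliary choices: the composition $k \in q_*^{-1}C(\kappa)$ picked at the start, the representatives $k_i$ of the $G$-conjugacy classes decorating $c(\kappa)$, and the choice $c(\kappa)$ itself. Independence from the representatives $k_i$ reduces to the corresponding independence for $M(\Tr_\phi)_{[L]}$ noted in Step 1, while independence from the ordering is absorbed into the factor $\kappa!/\lambda!$ that appears when passing from compositions back to partitions. No other substantive difficulty arises.
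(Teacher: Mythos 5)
Your proposal is correct and follows essentially the same route as the paper: both reduce to Corollary \ref{cor:parkstr} via the per-index coset count $|\{gH : K^g \in [L]\}| = M(\Tr_{H\subseteq G})_{[L]}$, factor the count over $i \in [|\kappa|]$, and then collapse compositions to partitions using \eqref{eq:compcounting}. The only cosmetic difference is that the paper first records the normalizer formula \eqref{eq:inccoefs} for the coefficients, whereas you read the coefficient off directly as a coset count from the classical transfer formula, which slightly streamlines the bookkeeping.
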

\begin{proof}
    The coefficients $M(\Tr_{H \subseteq G})_{[L]}$ used in \ref{prop:parksgentr} are given by
    \begin{equation} \label{eq:inccoefs}
        M(\Tr_{H \subseteq G})_{[L]} = [N_G(L) \colon N_H(L)].
    \end{equation}
    We will prove \ref{prop:parksinctr} using \eqref{eq:inccoefs} and \ref{cor:parkstr}.
    Notice that the formula of \ref{prop:parksinctr} on $\Parks(H,1,\Q) \cong \Marks(H,\Q)$ is equivalent to the classical formula \eqref{eq:marksinctr}.
    Thus, our proof of \ref{prop:parksinctr} will additionally verify the correctness of \eqref{eq:inccoefs}.

    Let $=_G$ denote the equivalence relation of two subgroups being $G$-conjugate and $=_H$ denote the equivalence relation of two subgroups being $H$-conjugate.
    Choose subgroups $L \subseteq H$ and $K \subseteq G$ and suppose $K =_G L$.
    Choose $a \in G$ witnessing that $K^a = L$.
    Then,
    \[ \{ g \in G/H \colon K^g =_H L \} = a \{ gH \colon L^g =_H L \} = aN_G(L)H/H \cong N_G(L)/N_H(L) \]
    so that
    \begin{equation} \label{eq:inccounting}
        \left| \{ gH \colon K^g \in [L] \} \right| = \left| \{ gH \colon K^g =_H L \} \right| = \frac{|N_G(L)|}{|N_H(L)|} = M(\Tr_{H \subseteq G})_{[L]}.
    \end{equation}
    Notice that $[L] \in \phi_*^{-1}([K])$ if and only if $L =_G K$.
    Thus, \eqref{eq:inccounting} holds when $\phi_*[L] = [K]$.

    Now, we compute that
    \begin{align*}
        \sum_{\substack{gH^{\times |\kappa|} \in G^{\times |\kappa|}/H^{\times |\kappa|} \\ k^g \in C(\Sub(H))}} f(uq_*k^g)
        &= \sum_{\substack{gH^{\times |\kappa|} \in G^{\times |\kappa|}/H^{\times |\kappa|} \\ \ell \in C(\Conj(H)) \\ k^g \in C(\Sub(H)) \\ q_*k^g = \ell}} f(u\ell)
        \\ &= \sum_{\substack{\ell \in C(\Conj(H))}} f(u\ell) \left|\left\{ gH^{\times |\kappa|} \in G^{\times |\kappa|}/H^{\times |\kappa|} \colon \substack{k^g \in C(\Sub(H)) \\ q_*k^g = \ell} \right\}\right| 
        \\ &= \sum_{\substack{\phi_*\ell = q_*k \\ \ell \in C(\Conj(H))}} f(u\ell) \left|\left\{ gH^{\times |\kappa|} \in G^{\times |\kappa|}/H^{\times |\kappa|} \colon \substack{(k_i^\dec)^{g_i} \subseteq H \\ (k_i^\dec)^{g_i} \in \ell^i_\dec}\ \forall i \right\}\right| 
        \\ &= \sum_{\substack{\phi_*\ell = q_*k \\ \ell \in C(\Conj(H))}} f(u\ell) \prod_i \left|\left\{ gH \in G/H \colon (k_i^\dec)^g \in \ell^i_\dec \right\}\right| 
        \\ &= \sum_{\substack{\phi_*\ell = c(\kappa) \\ \ell \in C(\Conj(H))}} f(u\ell) \prod_i M(\Tr_{H \subseteq G})_{\ell_i^\dec} 
        \\ &= \sum_{\lambda \in \phi_*^{-1}\kappa} f(\lambda) \sum_{\substack{\phi_*\ell = c(\kappa) \\ \ell \in C(\lambda)}} \prod_{[L],m} (M(\Tr_{H \subseteq G})_{[L]})^{\lambda_{[L],m}} 
        \\ &= \sum_{\lambda \in \phi_*^{-1}\kappa} \frac{\kappa!}{\lambda!} f(\lambda) \prod_{[L],m} (M(\Tr_{H \subseteq G})_{[L]})^{\lambda_{[L],m}} 
        = \Tr_{H \subseteq G}(f)(\kappa).
    \end{align*}
    The first equality follows by reparameterizing the sum according to the value of $q_*\kappa^g$.
    The second equality follows by recognizing the the summand is independent of most of the sum's quantifiers.
    The third equality follows by restricting the sum to only those compositions whose corresponding counting problem gives a nonzero result and by replacing statements about compositions with index-wise statements.
    The fourth equality follows by replacing the index-wise counting problem with a product of counting problems.
    The fifth equality follows from \eqref{eq:inccounting}.
    The sixth equality follows from \eqref{eq:independent} and reparameterizing the sum according to partitions.
    The seventh equality follows from \eqref{eq:compcounting}.
\end{proof}

Next, we consider transfers to the trivial group.
Recall the Burnside orbit counting lemma which gives a formula for the transfer $A(G) \to A(e)$ on marks:
\begin{equation} \label{eq:markstrivtr}
    \Tr_{G \to e}(f)([e]) = \frac{1}{|G|}\sum_{g \in G} f([\ag g]),
\end{equation}
for $f \in \Marks(G,\Q)$.
In particular, this formula involves taking conjugacy classes of cyclic subgroups generated by $g \in G$.
We will need a version of this for compositions.

Let $s \colon G \to \Conj(G) \colon g \mapsto [\ag g]$ be the function that maps an element to the conjugacy class of its cyclic subgroup.
Then, we may consider $s_* \colon C(G) \to C(\Conj(G))$ which replaces decorations by elements of $G$ with the conjugacy class of the cyclic subgroup they generate.
\begin{proposition} \label{prop:parkstrivtr}
    The transfer $\Tr_{G \to e} \colon \Parks(G,\Q) \to \Parks(e,\Q)$ is given by
    \[ \Tr_{G \to e}(f)(\lambda) = \frac{1}{|G|^{|\lambda|}} \sum_{\substack{c^\num = c(\lambda) \\ c \in C(G)}} f(us_*c) \]
    for $f \in \Parks(G,\Q)$ and an integer partition $\lambda \in \Parts(e)$.
\end{proposition}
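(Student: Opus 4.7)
The plan is to specialize \ref{thm:parksform} to $F = \Tr_{G \to e}$ and reparametrize the resulting sum over compositions from decorations in $\Conj(G)$ to decorations in $G$ itself.

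First, I would identify the matrix coefficients $M(\Tr_{G \to e})^{[e]}_{[L]}$. Since $\Conj(e) = \{[e]\}$, there is only one matrix entry per $[L]$, and Burnside's orbit counting lemma (recalled in \ref{subsec:marks tr}) gives $\Tr_{G \to e}(\1_{[L]})([e]) = |s^{-1}([L])|/|G|$, where $s \colon G \to \Conj(G) \colon g \mapsto [\ag g]$ is the cyclic-subgroup map. Hence
\[ M(\Tr_{G \to e})^{[e]}_{[L]} = \frac{|s^{-1}([L])|}{|G|}. \]

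Next, I would plug these coefficients into \ref{thm:parksform}. A partition $\lambda \in \Parts(e)$ is simply an integer partition, so every part of the composition $c(\lambda)$ has trivial decoration $[e]$, and the formula reads
\[ \Tr_{G \to e}(f)(\lambda) = \sum_{\substack{\ell^\num = c(\lambda) \\ \ell \in C(\Conj(G))}} f(u\ell) \prod_i \frac{|s^{-1}(\ell_i^\dec)|}{|G|} = \frac{1}{|G|^{|\lambda|}} \sum_{\substack{\ell^\num = c(\lambda) \\ \ell \in C(\Conj(G))}} f(u\ell) \prod_i |s^{-1}(\ell_i^\dec)|, \]
using that the product has $|\lambda|$ factors of $1/|G|$.

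Finally, I would reparametrize. A $G$-decorated composition $c \in C(G)$ with $c^\num = c(\lambda)$ is precisely the data of a $\Conj(G)$-decorated composition $\ell = s_*c \in C(\Conj(G))$ with $\ell^\num = c(\lambda)$ together with a choice of $c_i^\dec \in s^{-1}(\ell_i^\dec)$ for each $i$. Since $u s_*c = u\ell$, grouping $c$ by $\ell = s_*c$ gives
\[ \sum_{\substack{c^\num = c(\lambda) \\ c \in C(G)}} f(us_*c) = \sum_{\substack{\ell^\num = c(\lambda) \\ \ell \in C(\Conj(G))}} f(u\ell) \prod_i |s^{-1}(\ell_i^\dec)|, \]
and combining with the previous display yields the desired formula. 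The main obstacle is simply bookkeeping: keeping the two flavours of compositions and the map $s$ straight, and ensuring the counting in the fiber of $s_* \colon C(G) \to C(\Conj(G))$ matches the matrix coefficients produced by Burnside's lemma.
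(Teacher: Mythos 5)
Your proposal is correct and takes essentially the same approach as the paper: identify the matrix coefficients via Burnside's orbit counting lemma, then reparametrize the sum over $\Conj(G)$-decorated compositions from \ref{thm:parksform} into a sum over $G$-decorated compositions via the fibers of $s_*$. The only cosmetic difference is that the paper routes through the intermediate \ref{prop:parksgentr} and the closed-form coefficient $|(\Z/|K|)^\times|/|N_G(K)|$, whereas you work directly from \ref{thm:parksform} and use the coefficient in the form $|s^{-1}([L])|/|G|$; the counting argument identifying $\prod_i |s^{-1}(\ell_i^\dec)|$ with the fiber of $s_*$ over a fixed $\ell$ is the same in both.
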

    Notice that there is a bijection between the set of $c \in C(G)$ such that $c^\num = c(\lambda)$ and $G^{\times |\lambda|}$, so this is really a generalization of Burnside's orbit counting lemma.
\begin{proof}
    The coefficients $M(\Tr_{G \to e})_{[K]}$ used in \ref{prop:parksgentr} are given 
    \begin{equation} \label{eq:trivcoefs}
        M(\Tr_{G \to e})_{[K]} = \begin{cases} | (\Z/|K|)^\times | / |N_G(K)| & K \text{ cyclic} \\ 0 & \text{otherwise.} \end{cases}
    \end{equation}
    We will prove \ref{prop:parkstrivtr} using \eqref{eq:trivcoefs} and \ref{cor:parkstr}.
    Notice the the formula of \ref{prop:parkstrivtr} on $\Parks(G,1,\Q) \cong \Marks(G,\Q)$ is equivalent to Burnside orbit counting lemma \eqref{eq:markstrivtr}.
    Thus, our proof of \ref{prop:parkstrivtr} will additionally verify the correctness of \eqref{eq:trivcoefs}.

    Suppose that $H \subseteq G$ is a cyclic subgroup.
    Then,
    \begin{align*}
        \{ g \in G \colon [\ag g] = [H] \} 
        &\cong \coprod_{K =_G H} \{ g \colon \ag g = K \}
        \\ &\cong \{K \colon K =_G H\} \times (\Z/|H|)^\times \cong G/N_G(H) \times (\Z/|H|)^\times.
    \end{align*}
    Otherwise, this set is empty.
    Thus,
    \begin{equation} \label{eq:trivcounting}
        |\{ g \colon [\ag g] = [H] \}| = |G| M(\Tr_{G \to e})_{[H]}.
    \end{equation}
    Let $\phi$ denote the map $G \to e$.
    Notice that $\phi_*\kappa = \kappa^\num$ for $\kappa \in \Parts(G)$.
    Then, we compute that
    \begin{align*}
        \frac{1}{|G|^{|\lambda|}} \sum_{\substack{c^\num = c(\lambda) \\ c \in C(G)}} f(us_*c)
        &= \frac{1}{|G|^{|\lambda|}} \sum_{\substack{k^\num = c(\lambda),\, s_*c = k \\ k \in C(\Conj(G)),\, c \in C(G)}} f(uk)
        \\&= \frac{1}{|G|^{|\lambda|}} \sum_{\substack{k^\num = c(\lambda) \\ k \in C(\Conj(G))}} f(uk) |\{ c \in C(G) \colon s_*c = k \}| 
        \\&= \frac{1}{|G|^{|\lambda|}} \sum_{\substack{k^\num = c(\lambda) \\ k \in C(\Conj(G))}} f(uk) \prod_i |\{ g \colon [\ag g] =k_i^\dec \}|
        \\&= \frac{1}{|G|^{|\lambda|}} \sum_{\substack{k^\num = c(\lambda) \\ k \in C(\Conj(G))}} f(uk) \prod_i |G|M(\Tr_{G \to e})_{k_i^\dec} 
        \\&= \sum_{\substack{k^\num = c(\lambda) \\ k \in C(\Conj(G))}} f(uk) \prod_i M(\Tr_{G \to e})_{k_i^\dec} 
        \\&= \sum_{\substack{\kappa^\num = \lambda \\ \kappa \in \Parts(G)}} f(\kappa) \sum_{\substack{k^\num = c(\lambda) \\ k \in C(\kappa)}} \prod_{[K],m} (M(\Tr_{G \to e})_{[K]})^{\kappa_{[K],m}} 
        \\&= \sum_{\substack{\kappa^\num = \lambda \\ \kappa \in \Parts(G)}} \frac{\lambda!}{\kappa!} f(\kappa) \prod_{[K],m} (M(\Tr_{G \to e})_{[K]})^{\kappa_{[K],m}} 
        \\&= \Tr_{G \to e}(f)(\lambda).
    \end{align*}
    The first equality follows from reparameterizing the sum according to the value of $s_*k$.
    The second equality follows by recognizing the the summand is independent of most of the sum's quantifiers.
    The third equality follows by replacing the counting problem by the product over each index of a counting problem for that index.
    The fourth equality follows from \eqref{eq:trivcounting}.
    The sixth equality follows from \eqref{eq:independent} and reparameterizing the sum according to partitions.
    The seventh equality follows from \eqref{eq:compcounting}.
\end{proof}

The proposition above provides a formula for the transfer $\AA(G,n) \to A(\Sigma_n)$ on the level of parks and marks. 
That is, on the subring $\AA(G,n) \subseteq A(G \wr \Sigma_n)$, the transfer $A(G \wr \Sigma_n) \to A(\Sigma_n)$ admits a Burnside-lemma-like description. 

It is worth noting that such a formula should have been expected. The commutativity of the diagram
\[
\xymatrix{\AA(G,n) \ar[r] \ar[d]^-{\Tr} & A(G \wr \Sigma_n) \ar[r] \ar[d]^-{\Tr} & RU(G \wr \Sigma_n) \ar[d]^-{\Tr} \\ \AA(e,n) \ar[r] & A(\Sigma_n) \ar[r] & RU(\Sigma_n)}
\]
and the fact that the composite along the bottom is an isomorphism implies that the formula for the transfer $\Parks(G,n,\Q) \to \Parks(e,n,\Q)$ can be given in terms of the formula for the transfer in representation theory. A consequence of this observation is that, since $A(C)$ embeds in $RU(C)$ for any cyclic group $C$, there is always a formula analogous to Burnside's orbit counting lemma for the transfer along a surjection from a finite group onto a cyclic group.

\section{Formulas for restrictions on parks} \label{sec:parksres}

In this section, we analyze \ref{thm:parksform} in the case where the map $F \colon A(H) \to A(G)$ is a restriction.
In fact, we consider a slight generalization of this situation, which includes the Frobenius--Wielandt map as an example.

\begin{proposition} \label{prop:parksgenres}
    Let $F \colon A(H) \to A(G)$ be a map of abelian groups.
    Suppose that $F$ is given on marks by restriction along a function.
    Equivalently, suppose there is a map $\psi \colon \Conj(G) \to \Conj(H)$ with the property that
    \[ F(f)([K]) = f(\psi[K]) \]
    for $f \in \Marks(H,\Q)$.
    Then, $\r F \colon \Parks(H,\Q) \to \Parks(G,\Q)$ has the following formula:
    \[ \r F(f)(\kappa) = f(\psi_*\kappa) \]
    for $f \in \Parks(H,\Q)$ and $\kappa \in \Parts(G)$, where $\psi_*$ denotes the pushforward $\Parts(G) \to \Parts(H)$.
    On indicator functions,
    \[ \r F(\1_\lambda) = \sum_{\kappa \in \psi_*^{-1}\lambda} \1_\kappa \]
    for $\lambda \in \Parts(H)$, where $\psi_*^{-1}$ denotes the preimage under $\psi_*$.
\end{proposition}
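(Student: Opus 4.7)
The plan is to specialize Theorem \ref{thm:parksform} to this setting. The hypothesis $F(f)([K]) = f(\psi[K])$ means that the matrix representing $F$ on marks is essentially a ``permutation matrix'' determined by $\psi$: concretely, $M(F)^{[K]}_{[L]}$ equals $1$ if $[L] = \psi([K])$ and $0$ otherwise. So my first step is to record this.

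Next, I would plug this into the general formula
\[ \r F(f)(\kappa) = \sum_{\substack{\ell^\num = c(\kappa)^\num \\ \ell \in C(\Conj(H))}} f(u\ell) \prod_i M(F)^{c(\kappa)_i^\dec}_{\ell_i^\dec} \]
from Theorem \ref{thm:parksform}. For a fixed $\kappa$, the product $\prod_i M(F)^{c(\kappa)_i^\dec}_{\ell_i^\dec}$ vanishes unless $\ell_i^\dec = \psi(c(\kappa)_i^\dec)$ for every index $i$, in which case it equals $1$. Combined with the constraint $\ell^\num = c(\kappa)^\num$, this pins down $\ell$ uniquely to be $\psi_* c(\kappa)$, so the entire sum collapses to a single term.

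Finally, I would identify this single term as the desired expression by noting that $u$ and pushforward commute, so $u(\psi_* c(\kappa)) = \psi_*(u c(\kappa)) = \psi_* \kappa$. This gives $\r F(f)(\kappa) = f(\psi_* \kappa)$. The indicator function formula then follows immediately: $\r F(\1_\lambda)(\kappa) = \1_\lambda(\psi_* \kappa)$ is $1$ exactly when $\kappa \in \psi_*^{-1}(\lambda)$, so $\r F(\1_\lambda) = \sum_{\kappa \in \psi_*^{-1}\lambda} \1_\kappa$.

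There is no substantive obstacle here; the proof is essentially a direct unpacking of Theorem \ref{thm:parksform} once one notices that the matrix $M(F)$ has at most one nonzero entry in each row, and that entry is a $1$. The only thing to watch is the bookkeeping with the direction of $\psi$: here $\psi$ points from $\Conj(G)$ to $\Conj(H)$ (opposite to the situation in Proposition \ref{prop:parksgentr}), so $\psi_*$ goes from $\Parts(G)$ to $\Parts(H)$, which is why evaluating $f \in \Parks(H,\Q)$ at $\psi_* \kappa$ makes sense.
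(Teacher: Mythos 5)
Your proof is correct and takes essentially the same route as the paper: both observe that the hypothesis forces $M(F)^{[K]}_{[L]}$ to be $1$ when $[L]=\psi[K]$ and $0$ otherwise, then plug into Theorem~\ref{thm:parksform} so that the sum collapses to the single composition $\ell = \psi_* c(\kappa)$, whence $\r F(f)(\kappa) = f(u\psi_*c(\kappa)) = f(\psi_*\kappa)$. The only cosmetic difference is that you expand the second (``equivalently'') form of the formula from Theorem~\ref{thm:parksform} directly, while the paper first computes the coefficients $\r M(F)^\kappa_\lambda$; these are the same computation.
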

\begin{proof}
    This is a simple consequence of \ref{thm:parksform}.
    First, notice that the matrix components $M(F)$ are given by
    \[ M(F)^{[K]}_{[L]} = \begin{cases} 1,& \psi[K] = [L] \\ 0, &\text{otherwise.} \end{cases} \]
    Thus, we compute that
    \begin{align*}
        \r M(F)^\kappa_\lambda 
        &= \sum_{\substack{\ell^\num = c(\kappa)^\num \\ \ell \in C(\lambda)}} \prod_i M(F)^{c(\kappa)_i^\dec}_{\ell_i^\dec}
        = \sum_{\substack{\ell = \psi_*c(\kappa) \\ \ell \in C(\lambda)}} 1
        = \begin{cases} 1,& \psi_*\kappa = \lambda \\ 0, &\text{otherwise.} \end{cases}
    \end{align*}
    This completes the proof.
\end{proof}

Let $\phi \colon G \to H$ be a homomorphism of finite groups.
The restriction map $\Res_\phi$ has a very simple formula on marks:
\[ \Res_\phi(f)([K]) = f(\phi[K]) = f([\phi(K)]) \]
for $f \in \Marks(H,\Q)$, where $\phi \colon \Conj(G) \to \Conj(H)$ is the induced map on conjugacy classes of subgroups.
Thus, we may specialize the previous proposition to the case of restriction along a group homomorphism.
\begin{corollary}
    Let $\phi \colon G \to H$ be a homomorphism of finite groups.
    Then, the restriction map on parks $\Res_\phi \colon \Parks(H,\Q) \to \Parks(G,\Q)$
    has the following formula:
    \[ \Res_\phi(f)(\kappa) = f(\phi_*\kappa) \]
    for $f \in \Parks(H,\Q)$ and $\kappa \in \Parts(G)$.
    On indicator functions,
    \[ \Res_\phi(\1_\lambda) = \sum_{\kappa \in \phi_*^{-1}\lambda} \1_\kappa \]
    for $\lambda \in \Parts(H)$.
\end{corollary}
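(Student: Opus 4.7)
The plan is to apply \ref{prop:parksgenres} directly, with the function $\psi$ taken to be the map $\phi \colon \Conj(G) \to \Conj(H)$ induced on conjugacy classes by the group homomorphism. To invoke that proposition, the only hypothesis to check is that the restriction $\Res_\phi \colon A(H) \to A(G)$ is given on marks by precomposition with this $\psi$. This is exactly the content of the standard formula from \ref{subsec:marks res}, namely $\Res_\phi(f)([K]) = f([\phi(K)])$ for $f \in \Marks(H,\Q)$ and $[K] \in \Conj(G)$, so no additional work is required here.

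Once this identification is made, both formulas in the corollary follow by reading off the conclusions of \ref{prop:parksgenres}. The formula $\Res_\phi(f)(\kappa) = f(\phi_*\kappa)$ is the specialization to $\psi = \phi$, and the indicator function formula
\[ \Res_\phi(\1_\lambda) = \sum_{\kappa \in \phi_*^{-1}\lambda} \1_\kappa \]
is the corresponding specialization of the indicator formula in \ref{prop:parksgenres}. No steps involve any real obstacle: the entire content of this corollary is the verification that restriction along a group homomorphism fits the general hypothesis of \ref{prop:parksgenres}, which it does by definition. Accordingly, the proof will be a single short paragraph noting this fact and citing the proposition.
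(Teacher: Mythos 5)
Your proposal is correct and follows essentially the same route as the paper: the paper's proof likewise reads off both formulas from \ref{prop:parksgenres} after observing that restriction on marks is precomposition with $\phi \colon \Conj(G) \to \Conj(H)$.
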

\begin{proof}
    This follows immediately from \ref{prop:parksgenres} using $F=\Res_\phi$ and $\psi=\phi\colon \Conj(G) \to \Conj(H)$.
\end{proof}

\section{The Frobenius--Wielandt map and norms} \label{sec:weird}

\ref{prop:parksgenres} is relevant to more than restriction along group homomorphisms.
In particular, we may apply it to the Frobenius--Wielandt map. In this section, we extend the Frobenius--Wielandt map to the Burnside ring of the wreath product compatibly with the total power operation and describe the relation to work of Romero in \cite{Romero2020}.

Let $C_m$ denote the cyclic group of size $m$. 
When $d \divides m$, we will identify $C_d$ with the unique order $d$ subgroup of $C_m$. Further, for a finite group $G$, let $C_G$ denote the cyclic group with the same order as $G$ so that $C_G = C_{|G|}$.

Recall the Frobenius--Wielandt map $w \colon A(C_G) \to A(G)$.
The map $w$ is given on marks by precomposition with $\Cyc \colon \Conj(G) \to \Conj(C_G)$ sending $[H]$ to the conjugacy class of $C_H \subseteq C_G$. 
Thus, $w$ extends to a commutative graded ring homomorphism $\wA\colon \AA(C_G) \to \AA(G)$, and the induced map $\wA \colon \Parks(C_G,\Q) \to \Parks(G,\Q)$ is given by precomposition with $\Cyc_* \colon \Parts(G) \to \Parts(C_G)$. We have the following corollary:

\begin{corollary}
There is a commutative diagram 
\[
\xymatrix{A(C_G) \ar[r]^-{w} \ar[d]_-{\P_n} & A(G) \ar[d]^-{\P_n} \\ \AA(C_G,n) \ar[r]^-{\wA} & \AA(G,n).}
\]
\end{corollary}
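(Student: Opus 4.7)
The plan is to deduce the commutativity directly from the definition of $\wA$ together with the functoriality of $\Pow$. Recall from \ref{def:Fang} that for any abelian group map $F \colon A(H) \to A(G)$, the induced map $\r F \colon \AA(H) \to \AA(G)$ is obtained from $\Pow(F)$ by transport across the isomorphism $\AA(-) \cong \Pow(A(-))$ of \ref{cor:powang}. In particular, $\wA = \r w$ fits by construction into the commutative square
\[
\begin{tikzcd}
\Pow(A(C_G)) \ar[r, "\Pow(w)"] \ar[d, "\cong"'] & \Pow(A(G)) \ar[d, "\cong"] \\
\AA(C_G) \ar[r, "\wA"] & \AA(G).
\end{tikzcd}
\]

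The key observation is that the vertical isomorphism $\Pow(A(G)) \cong \AA(G)$ of \ref{cor:powang} sends each universal generator $P_n(x) \in \Pow(A(G))_n$ to the total power operation $\P_n(x) \in \AA(G,n)$. This holds because, by construction, this isomorphism is the map supplied by the universal property of \ref{prop:powcorep} applied to the $\AA(G)$-valued family of power operations $(\P_n \colon A(G) \to \AA(G,n))_{n \geq 0}$. Under this identification, the abstract universal power operation $P_n$ is literally the total power operation $\P_n$, both for $G$ and for $C_G$.

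Finally, from the presentation of $\Pow$ in the proof of \ref{prop:powcorep}, the map $\Pow(w) \colon \Pow(A(C_G)) \to \Pow(A(G))$ sends $P_n(x)$ to $P_n(w(x))$ on generators. Translating this identity across the vertical isomorphisms of the square above yields $\wA(\P_n(x)) = \P_n(w(x))$ for all $x \in A(C_G)$, which is exactly the commutativity of the square in the corollary. There is no genuine obstacle here: once one recognizes that the isomorphism $\AA(-) \cong \Pow(A(-))$ of \ref{cor:powang} identifies the total power operations with the universal ones, the result is immediate from the naturality of the universal power operations with respect to abelian group homomorphisms.
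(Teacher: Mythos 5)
Your proof is correct and is essentially the argument the paper leaves implicit (the corollary is stated without proof, taken as immediate from the construction of $\wA = \r{w}$ in \ref{def:Fang}). The one small imprecision is saying the isomorphism of \ref{cor:powang} is ``by construction'' the map supplied by the universal property applied to $(\P_n)_n$; the paper actually builds it through \ref{prop:AAMonRing} and \ref{cor:basis}, but since that composite sends $P_n([G/H])$ to $[(G \wr \Sigma_n)/(H \wr \Sigma_n)] = \P_n([G/H])$ on the polynomial generators and these ring-generate $\Pow(A(G))$, it coincides with the map classified by $(\P_n)_n$, so the identification $P_n \leftrightarrow \P_n$ you rely on is indeed correct.
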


We may use $\wA \colon \AA(C_G,n) \to \AA(G,n)$ to define a map
\[
w_n \colon A(C_G \wr \Sigma_n) \to A(G \wr \Sigma_n)
\]
that is compatible with $w \colon A(C_G) \to A(G)$ through the total power operation $\P_n$. We define $w_n$ to be the composite
\[
w_n \colon A(C_G \wr \Sigma_n) \xrightarrow{r} \AA(C_G,n) \xrightarrow{\wA} \AA(G,n) \hookrightarrow A(G \wr \Sigma_n),
\]
where $r$ is the map of \ref{def:r}.

\begin{corollary} \label{cor:FWandpower}
There is a commutative diagram 
\[
\xymatrix{A(C_G) \ar[r]^-{w} \ar[d]_-{\P_n} & A(G) \ar[d]^-{\P_n} \\ A(C_G \wr \Sigma_n) \ar[r]^-{w_n} & A(G \wr \Sigma_n).}
\]
\end{corollary}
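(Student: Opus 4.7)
The plan is to reduce \ref{cor:FWandpower} to the commutativity of the preceding diagram
\[ \begin{tikzcd} A(C_G) \ar[r,"w"] \ar[d,"\P_n"'] & A(G) \ar[d,"\P_n"] \\ \AA(C_G,n) \ar[r,"\wA"] & \AA(G,n) \end{tikzcd} \]
by unpacking the definition of $w_n$ and using that $r$ is a retract of the inclusion.

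Concretely, fix $X \in A(C_G)$. Going around the bottom of the square in \ref{cor:FWandpower}, we compute
\[ w_n(\P_n(X)) = (\AA(G,n) \hookrightarrow A(G\wr\Sigma_n)) \circ \wA \circ r \circ \P_n(X). \]
Since the total power operation $\P_n \colon A(C_G) \to A(C_G\wr\Sigma_n)$ factors through the subring $\AA(C_G,n) \subseteq A(C_G\wr\Sigma_n)$ (by the very definition of $\AA(C_G,n)$ or equivalently by \ref{cor:AAisPoly}), the element $\P_n(X)$ already lies in $\AA(C_G,n)$. By the corollary immediately following \ref{def:r}, the map $r$ is a retract of the inclusion $\AA(C_G,n) \hookrightarrow A(C_G\wr\Sigma_n)$, so $r(\P_n(X)) = \P_n(X)$. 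Thus
\[ w_n(\P_n(X)) = \wA(\P_n(X)), \]
viewed inside $A(G\wr\Sigma_n)$ via the inclusion $\AA(G,n) \hookrightarrow A(G\wr\Sigma_n)$.

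It remains to verify the preceding diagram, i.e.\ that $\wA \circ \P_n = \P_n \circ w$. This is a formal consequence of how $\wA$ is defined in \ref{def:Fang}: under the identification $\AA(-) \cong \Pow(A(-))$ of \ref{cor:powang}, $\wA$ is precisely $\Pow(w)$, and $\Pow$ is a functor whose universal power operations are, by the construction of \ref{cor:powang}, identified with the total power operations $\P_n$ on Burnside rings. Explicitly, the family $(\P_n \colon A(G) \to \AA(G,n))_{n \geq 0}$ corresponds under the bijection of \ref{prop:powcorep} to the identity $\AA(G) \to \AA(G)$; precomposing this family with $w$ gives, by functoriality of $\Pops$ in its first argument, the family corresponding to $\wA = \Pow(w) \colon \AA(C_G) \to \AA(G)$. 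In formulas,
\[ \P_n \circ w = \wA \circ \P_n \colon A(C_G) \to \AA(G,n). \]
Combining with the first paragraph yields $w_n(\P_n(X)) = \P_n(w(X))$, proving commutativity of the square in \ref{cor:FWandpower}.

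No step is a real obstacle here; the argument is essentially bookkeeping around the universal property of $\Pow$ and the retract property of $r$. The only subtlety worth emphasizing in the write-up is that $\P_n$ on $A(C_G)$ already lands in $\AA(C_G,n)$, which is what makes $r$ innocuous in the composite defining $w_n$.
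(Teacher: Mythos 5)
Your proof is correct and matches the argument the paper leaves implicit: the paper states both this corollary and the preceding one (commutativity of $\wA\circ\P_n = \P_n\circ w$) without proof, relying on the fact that $\wA$ is $\Pow(w)$ under the identification of \ref{cor:powang} and that $r$ retracts the inclusion. You have correctly filled in those details, including the key observation that $\P_n$ already lands in $\AA(C_G,n)$ so that $r$ acts as the identity there.
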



Recall that, for $H \subseteq G$, the norm (also called tensor induction) from $H$ to $G$ is the multiplicative map
\[
\Nm_{H}^{G} \colon A(H) \to A(G)
\]
induced by the map sending an $H$-set $X$ to the $G$-set $\Fun_H(G,X)$ of $H$-equivariant maps from $G$ to $X$. It is compatible with the function $\Nm_{H}^{G} \colon \Marks(H) \to \Marks(G)$ given by
\[
\Nm_{H}^{G}(f)([K]) = \prod_{KgH \in K \backslash G /H} f([K^g \cap H]).
\]
Further, the norm can be constructed from the total power operation. 
Let $n=[G: H]$.
The monomial representation (or relative Cayley map) is an injective homomorphism $G \to H \wr \Sigma_n$, which is well-defined up to conjugacy (see \cite[Section 5.2]{Evens} for a complete description). 
The norm is the composite
\[
A(H) \xrightarrow{\P_n} A(H \wr \Sigma_n) \xrightarrow{\Res_{G}^{H \wr \Sigma_n}} A(G),
\]
where the restriction is along the monomial representation.

In \cite{Romero2020}, Romero considers the interaction between the Frobenius--Wielandt map and norms for Burnside rings. Let $H \subseteq G$ be a subgroup. The subgroup $H$ is said to satisfy the ``gcd property'' if for all subgroups $K \subseteq G$,
\[
|K \cap H| = \gcd(|K|,|H|).
\]
Subgroups satisfying the gcd property are necessarily normal.
Romero proves that the the norm from $H$ to $G$ commutes with the Frobenius--Wielandt map if and only if $H$ satisfies the gcd property.

However, \ref{cor:FWandpower} explains that the Frobenius--Wielandt map can always be extended to the target of the total power operation. 
This suggests that the gcd property is really a requirement on the restriction map $\Res_{G}^{H \wr \Sigma_n}$ along the monomial representation, and the next two results will prove that this is in fact the case.

Since the total power operation factors through $\AA(H,n)$, it suffices to understand the composite
\[
\AA(H,n) \hookrightarrow A(H \wr \Sigma_n) \xrightarrow{\Res_{G}^{H\wr \Sigma_n}} A(G).
\]
We will call this composite $R_G^H$.

\begin{proposition} \label{prop:evansform}
    Let $H \subseteq G$ and let $n = [G:H]$.
    The restriction map $R_G^H$ described above is compatible with the map $R_G^H \colon \Parks(H, n) \to \Marks(G)$ given by the formula
    \[ R_G^H(f)([K]) = f\left(\sum_{KaH \in K \backslash G /H} ([K^a \cap H], |KaH|/|H|) \right) \]
    for $f \in \Parks(H,n)$ and $[K] \in \Conj(G)$.
\end{proposition}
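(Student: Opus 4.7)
The plan is to reduce the claim to the single identity
\[ \beta([\phi(K)]) = \sum_{KaH \in K \backslash G / H} ([K^a \cap H], |KaH|/|H|) \in \Parts(H,n), \quad [K] \in \Conj(G), \]
where $\phi \colon G \hookrightarrow H \wr \Sigma_n$ is the monomial representation. Given this identity, compatibility follows by tracing $y \in \AA(H,n)$ with character $f = \chi(y) \in \Parks(H,n)$ through the characters: by \ref{cor:levelpullback} the image of $y$ in $A(H \wr \Sigma_n)$ has character $\beta^* f$, and the restriction formula on marks of \ref{subsec:marks res} then gives
\[ \chi(R_G^H(y))([K]) = (\beta^* f)([\phi(K)]) = f(\beta([\phi(K)])), \]
which matches the right-hand side of the proposition's formula once the identity above is substituted.

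I would prove the identity by directly unpacking \ref{def:beta}. Fix coset representatives $a_1, \ldots, a_n$ of $G/H$ defining $\phi$ and identify $[n]$ with $G/H$ via $i \leftrightarrow a_i H$. By construction, the composite $K \xrightarrow{\phi} H \wr \Sigma_n \twoheadrightarrow \Sigma_n$ is the permutation representation of $K$ on $G/H$, so the $\phi(K)$-orbits on $[n]$ are in bijection with $K \backslash G / H$, and the orbit corresponding to $KaH$ has size $|KaH|/|H|$. For $k \in K$ stabilizing $a_i H$ (equivalently $k \in K \cap a_i H a_i^{-1}$), the defining cocycle identity $k a_i = a_i (a_i^{-1} k a_i)$ of the monomial representation shows that the $i$-th coordinate of $\phi(k)$ is $a_i^{-1} k a_i \in H$. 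Hence $\phi(K)_{ii} = a_i^{-1}(K \cap a_i H a_i^{-1}) a_i = K^{a_i} \cap H$ as subgroups of $H$. Summing over one representative from each orbit and noting that replacing $a$ by $kah$ conjugates $K^a \cap H$ by $h \in H$ (so $[K^a \cap H] \in \Conj(H)$ depends only on the double coset $KaH$) yields the identity.

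The main obstacle is the bookkeeping of conventions for the monomial representation, particularly landing on $K^a \cap H$ (the conjugate via $a^{-1}$) rather than the $H$-conjugate subgroup $K \cap aHa^{-1}$; the two are canonically identified via conjugation by $a$, but the definition of $H_{ii}$ in \ref{sec:beta} forces the specific representative $K^{a_i} \cap H$. Once the cocycle convention of \cite[Section 5.2]{Evens} is fixed, this identification is essentially automatic, and the remainder of the argument is routine.
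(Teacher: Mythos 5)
Your proposal is correct and follows essentially the same route as the paper's proof: reduce to the single identity $\beta([\phi(K)]) = \sum_{KaH} ([K^a \cap H], |KaH|/|H|)$ via the character pullback square, then verify it by identifying $[n]$ with $G/H$, matching $\phi(K)$-orbits with $K$-double cosets, and computing $\phi(K)_{ii} = K^{a_i} \cap H$ from the cocycle in the monomial representation. The paper spells out the cocycle $\phi_H(g)_{aH} = s(a)^{-1}gs(g^{-1}a)$ and checks it is a homomorphism, but your cocycle identity $ka_i = a_i(a_i^{-1}ka_i)$ for $k$ stabilizing $a_iH$ is the same calculation in the only case that matters here.
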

\begin{proof}
    Let $\phi \colon G \to H \wr \Sigma_n$ denote the monomial representation.
    The map $R_G^H \colon \Parks(H,n) \to \Marks(G)$ is the composite
    \[ R_G^H \colon \Parks(H,n) \xrightarrow{\beta^*} \Marks(H \wr \Sigma_n) \xrightarrow{\Res_\phi} \Marks(G), \]
    and the map $\Res_\phi$ on marks is given by precomposition with $\phi \colon \Conj(G) \to \Conj(H \wr \Sigma_n)$.
    Thus, to understand $R_G^H$, we only need to understand the map $\beta\phi \colon \Conj(G) \to \Parts(H,n)$.
    We will prove the proposition by showing that
    \[ \beta\phi[K] = \sum_{KaH \in K \under G/H} ([K^a \cap H], |KaH|/|H|). \]
    
    For the sake of this proof, we will choose a particular monomial representation, among the many conjugate choices.
    Let $[n]$ be the set $G/H$ so that $\Sigma_n = \Sigma_{G/H}$ and $H^{\times n} = H^{G/H}$.
    Let $\phi_\Sigma \colon G \to \Sigma_n$ be the (Cayley) homomorphism defined by
    \[ \phi_\Sigma(g) \colon aH \mapsto gaH. \]
    Choose a section $s \colon G/H \to G$ of the quotient $G \to G/H$.
    We will write $s(a)$ for $s(aH)$ to reduce clutter.
    Define a function $\phi_H \colon G \to H^{\times n}$ by
    \[ \phi_H(g)_{aH} = s(a)^{-1} g s(g^{-1}a). \]
    Then, the monomial representation is the homomorphism 
    \[ \phi\colon G \to H \wr \Sigma_n \colon g \mapsto (\phi_H(g), \phi_\Sigma(g)). \]
    To see that this is a homomorphism, note that
    \begin{align*}
        \phi(g)\phi(g') 
        &= (\phi_H(g)\phi_\Sigma(g)\phi_H(g'), \phi_\Sigma(g)\phi_\Sigma(g'))
        = (\phi_H(gg'), \phi_\Sigma(gg'))
        = \phi(gg'),
    \end{align*}
    where the nontrivial equality follows from the fact that
    \[ (\phi_H(g)\phi_\Sigma(g)\phi_H(g'))_{aH} = s(a)^{-1} g s(g^{-1}a) s(g^{-1}a)^{-1} g' s(g'^{-1}g^{-1}a) = \phi(gg')_{aH}. \]
    
    By \ref{def:beta},
    \[ \beta\phi[K] = \sum_{\phi(K) i \in \phi(K)\under [n]} ([\phi(K)_{ii}], |\phi(K)i|). \]
    Now, we take advantage of the fact that $[n] = G/H$ and that the action of $K$ on $G/H$ is left multiplication.
    Then, we may identify the quotient $\phi(K) \under [n]$ with $K \under G/H$ and the quantity $|\phi(K)i|$ with $|KaH|/|H|$, the number of $H$-cosets in the $K$-orbit $KaH$, where $i \in [n]$ is identified with $aH \in G/H$.
    All that remains is to understand the subgroup $\phi(K)_{ii} = \phi(K)_{aH\,aH}$.

    By definition $\phi(K)_{aH\,aH} = \{ \phi_H(g)_{aH} \mid g \in G,\, gaH=aH \}$.
    In the situation that $gaH = aH$, it follows that $\phi_H(g)_{aH} = s(a)^{-1} g s(a)$.
    Further, $gaH=aH$ if and only if $a^{-1}ga \in H$ if and only if $s(a)^{-1} g s(a) \in H$.
    Thus, it follows that $\phi(K)_{aH\,aH} = K^{s(a)} \cap H$ and thus that $[\phi(K)_{aH\,aH}] = [K^a \cap H]$.

    Assembling these observations, we find that
    \[ \beta\phi[K] = \sum_{KaH \in K \under G/H} ([K^a \cap H], |KaH|/|H|). \qedhere \]
\end{proof}

\begin{proposition}
    Assume $H \subseteq G$ and let $n=[G:H]$. The diagram
    \begin{equation} \label{eq:tensor square}
    \begin{tikzcd}
        \AA(C_H,n) \ar[r, "R_{C_G}^{C_H}"] \ar[d, "\wA"] & A(C_G) \ar[d, "w"] \\
        \AA(H,n) \ar[r, "R_G^H"] & A(G)
    \end{tikzcd}
    \end{equation}
    commutes if and only if $H$ satisfies the gcd property.
\end{proposition}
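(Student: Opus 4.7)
The plan is to reduce commutativity of the square to the equality of two partition-valued maps $\psi_1,\psi_2 \colon \Conj(G) \to \Parts(C_H,n)$, and then verify that this equality holds precisely when $H$ has the gcd property.

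First, I would reduce to parks and marks. Since $\chi_G \colon A(G) \hookrightarrow \Marks(G)$ is injective and $\AA(C_H,n) \otimes \Q \to \Parks(C_H,n,\Q)$ is an isomorphism, commutativity of \eqref{eq:tensor square} is equivalent to the equality of the two $\Q$-linear composites $\Parks(C_H,n,\Q) \to \Marks(G,\Q)$ obtained by chasing the square. Using \ref{prop:evansform} for both $R_G^H$ and $R_{C_G}^{C_H}$, the marks-level formula for $w$ (precomposition with $\Cyc \colon \Conj(G) \to \Conj(C_G)$), and \ref{prop:parksgenres} for $\wA$ (precomposition with $\Cyc_* \colon \Parts(H) \to \Parts(C_H)$), each path takes the form $f \mapsto ([K] \mapsto f(\psi_i([K])))$ for a certain function $\psi_i \colon \Conj(G) \to \Parts(C_H,n)$. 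Hence \eqref{eq:tensor square} commutes if and only if $\psi_1([K]) = \psi_2([K])$ as decorated partitions for every $[K] \in \Conj(G)$.

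Next, I would compute the two functions explicitly. Write $m = |G|$ and $h = |H|$. The abelianness of $C_G$ collapses each double coset $C_{|K|}\cdot a \cdot C_H$ to a single coset of $C_{\lcm(|K|,h)}$, and the intersection $C_{|K|} \cap C_H$ equals $C_{\gcd(|K|,h)}$, so the top path yields
\[
\psi_1([K]) = \frac{m}{\lcm(|K|,h)} \cdot \bigl([C_{\gcd(|K|,h)}],\ \lcm(|K|,h)/h\bigr).
\]
For the bottom path, I post-compose the double coset formula of \ref{prop:evansform} with $\Cyc_*$, obtaining
\[
\psi_2([K]) = \sum_{KaH \in K\backslash G/H} \bigl([C_{|K^a \cap H|}],\ |KaH|/|H|\bigr).
\]

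Finally, I would compare these. If $H$ satisfies the gcd property, then $H$ is normal and $|K^a \cap H| = \gcd(|K^a|,h) = \gcd(|K|,h)$ for every $a \in G$; applying orbit-stabilizer to the $K$-action on $G/H$, every part of $\psi_2([K])$ has common decoration $C_{\gcd(|K|,h)}$ and common size $|K|/|K^a \cap H| = \lcm(|K|,h)/h$, with the number of orbits equal to $m/\lcm(|K|,h)$, matching $\psi_1([K])$. Conversely, if $\psi_1 = \psi_2$ on every $[K]$, then inspecting the part of $\psi_2([K])$ indexed by the trivial double coset $KeH$ forces $|K \cap H| = \gcd(|K|,h)$ for every $K \subseteq G$, which is the gcd property. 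The main obstacle is the bookkeeping in Step 2: correctly unpacking $R_{C_G}^{C_H}$ in the abelian setting so that the double coset sum collapses into a single decorated part with multiplicity $m/\lcm(|K|,h)$, and aligning this with the $\Cyc_*$-push of the general double coset sum for $R_G^H$. Once both $\psi_i$ are written down in this form, the equivalence with the gcd property is immediate.
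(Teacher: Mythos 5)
Your proposal is correct and takes essentially the same approach as the paper's proof: reduce to parks/marks via \ref{prop:evansform} and \ref{prop:parksgenres}, compute the two composites $\Conj(G) \to \Parts(C_H,n)$ (which you call $\psi_1, \psi_2$; the paper writes out $\beta\phi\Cyc$ and $\Cyc_*\beta\phi$ directly), observe that abelianness of $C_G$ collapses the double coset sum to a single decorated part with multiplicity $|G|/\lcm(|K|,|H|)$, and compare using orbit-stabilizer in one direction and the trivial double coset in the other.
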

\begin{proof}
    The square \eqref{eq:tensor square} commutes if and only if the same corresponding square on parks and marks commutes.
    Using \ref{prop:evansform}, all of the maps on parks and marks are given by precomposition with known functions, so \eqref{eq:tensor square} commutes if and only if
    \[\begin{tikzcd}
        \Conj(G) \ar[r, "\beta\phi"] \ar[d, "\Cyc"'] & \Parts(H,n) \ar[d, "\Cyc_*"] \\
        \Conj(C_G) \ar[r, "\beta\phi"] & \Parts(C_H,n)
    \end{tikzcd}\]
    commutes.
    For $[K] \in \Conj(G)$, the top composite is given by
    \begin{align*}
        \Cyc_* \beta\phi([K]) 
        &= \Cyc_*\left( \sum_{KaH \in K \under G/H} ([K^a \cap H], |KaH|/|H|) \right)
        \\ &= \sum_{KaH \in K \under G/H} ([C_{K^a \cap H}], |KaH|/|H|)
    \end{align*}
    while the bottom composite is given by
    \begin{align*}
        \beta\phi\Cyc([K])
        &= \beta\phi([C_K])
        = \sum_{C_K m C_H \in C_K \under C_G/C_H} ([{C_K}^m \cap C_H], |C_K m C_H|/|C_H|)
        \\&= \frac{|G|}{\lcm(|K|,|H|)} \left([C_{\gcd(|K|,|H|)}], \frac{\lcm(|K|,|H|)}{|H|}\right).
    \end{align*}
    Thus, \eqref{eq:tensor square} commutes if and only if
    \begin{equation} \label{eq:cyc equation}
        \sum_{KaH \in K \under G/H} ([C_{K^a \cap H}], |KaH|/|H|)
        = \frac{|G|}{\lcm(|K|,|H|)} \left([C_{\gcd(|K|,|H|)}], \frac{\lcm(|K|,|H|)}{|H|}\right)
    \end{equation}
    for each subgroup $K \subseteq G$.
    
    Suppose $H$ satisfies the gcd property. 
    Then, for each subgroup $K \subseteq G$ and $a \in G$, $|K^a \cap H| = \gcd(|K|,|H|)$ so that $C_{K^a \cap H} = C_{\gcd(|K|,|H|)}$.
    Further, $H$ is normal so 
    \[ \frac{|KaH|}{|H|} = \frac{|KH|}{|H|} = \frac{|K|\cdot |H|}{|K \cap H|\cdot |H|} = \frac{|K|}{\gcd(|K|,|H|)} = \frac{\lcm(|K|,|H|)}{|H|} \]
    and 
    \[ |K \under G / H| = \frac{|G|}{|KH|} = \frac{|G|\cdot |K \cap H|}{|K|\cdot |H|} = \frac{|G| \cdot \gcd(|K|,|H|)}{|K|\cdot |H|} = \frac{|G|}{\lcm(|K|,|H|)}. \]
    From these observations, we conclude \eqref{eq:cyc equation} and thus that \eqref{eq:tensor square} commutes.

    Now suppose that \eqref{eq:tensor square} commutes and thus that \eqref{eq:cyc equation} holds for each subgroup $K \subseteq G$.
    Since the right hand side of \eqref{eq:cyc equation} is just a multiple of a single part with decoration $[C_{\gcd(|K|,|H|)}]$, \eqref{eq:cyc equation} implies in particular that $[C_{K \cap H}] = [C_{\gcd(|K|,|H|)}]$ since $([C_{K \cap H}], |KeH|/|H|)$ is a summand of the left hand side.
    Thus $|K \cap H| = \gcd(|K|,|H|)$ for each $K$ so that $H$ satisfies the gcd property.
\end{proof}

\bibliographystyle{alpha}
\bibliography{references.bib}

\end{document}